\newcommand{\intT}{\int_0^T}
\newcommand{\intt}{\int_0^t}
\newcommand{\DASP}{({\rm dom}A^*)'}
\newcommand{\zl}{\lambda}
\newcommand{\zreal}{\Re{\textstyle e}\,}
\newcommand{\ZR}{\rangle}
\newcommand{\ZL}{\langle}
\newcommand{\ZSI}{\sigma}
\newcommand{\ZEP}{\epsilon}
\newcommand{\ZLA}{\label}
\newcommand{\ZD}{\;\mbox{\rm d}}
\newcommand{\ZOM}{\omega}
\newtheorem{Hypothesis}{Hypothesis}[section]
 \newtheorem{Theorem}{Theorem}[section]
\newtheorem{Proposition}[Theorem]{Proposition}
\newtheorem{Lemma}[Theorem]{Lemma}
\newtheorem{Corollary}[Theorem]{Corollary}
\newtheorem{Remark}[Theorem]{Remark}
\newtheorem{Definition}[Theorem]{Definition}
\def\R{{\mathbb R}}
\def\C{{\mathbb C}}
\def\A{\mathcal A}
\begin{document}

\title  [LOI, NCVE and boundary controls]
 {Linear Operator
 Inequality and  Null Controllability with Vanishing Energy
 for unbounded control  systems}


\maketitle

\begin{center}

Luciano Pandolfi \footnote{Supported in part by Italian MURST and by
the project ``Groupement de Recherche en Contr\^ole des EDP entre la
France et l'Italie (CONEDP)''.}

\vspace{2mm}
        {\small \it Politecnico di Torino,
        Dipartimento di Matematica \par
        Corso Duca degli Abruzzi 24, 10129 Torino, Italy.\par
        e-mail \ \  luciano.pandolfi@polito.it }
\vspace{ 2 mm }

 Enrico Priola \footnote{Supported  by the
M.I.U.R. research project Prin 2008 ``Deterministic and stochastic
methods in the study of evolution problems''.}

\vspace{ 2 mm } {\small  \it Dipartimento di Matematica,
Universit\`a di Torino, \par
  via Carlo Alberto 10,  \ 10123, \ Torino, Italy. \par
 e-mail \ \ enrico.priola@unito.it }

\vspace{ 2 mm }

Jerzy Zabczyk \footnote{ Supported by the NSF grant DMS-0500270.}

\vspace{ 2 mm} {\small  \it Instytut Matematyczny,   Polskiej
Akademii Nauk, \par ul. Sniadeckich 8, 00-950, \ Warszawa,
Poland.\par
 e-mail \ \ zabczyk@impan.gov.pl }
\end{center}

{\vskip 5mm }

\noindent {\bf Mathematics  Subject Classification (2010):} 93C20, \
93C25.

\par \ \par

\noindent {\bf Key words:} Boundary control systems, Linear operator
inequality, Null controllability,  Vanishing energy.

\ \par

\noindent {\bf Abstract:} We consider    linear  systems  on a separable Hilbert space $H$, which are null
 controllable at some time $T_0>0$ under the action of a point or boundary
 control.
  Parabolic and hyperbolic control systems usually studied in applications are special cases.
 To every initial state $
y_0 \in H$
 we associate the minimal ``energy'' needed to transfer $
y_0 $ to $ 0 $ in a time $ T \ge T_0$ (``energy'' of a control being
the square of its $ L^2 $ norm).
We give both  necessary and sufficient conditions under which
 the minimal
 energy converges to $ 0 $ for $ T\to+\infty $. This
extends to boundary control systems the concept of {\em null controllability with vanishing energy\/} introduced by Priola and Zabczyk (Siam J. Control Optim.
42 (2003)) for distributed systems.
 The proofs in Priola-Zabczyk paper depend on    properties of the associated Riccati equation, which are not available in the present,
 general setting. Here we  base our results  on new properties of the quadratic regulator problem with stability and  the Linear Operator Inequality.

\section {Introduction  and preliminaries}

The paper~\cite{PZ} introduced  and studied the property of ``null
controllability with vanishing energy'', shortly NCVE, for systems
with distributed control action, which is as follows: consider a
semigroup control system
(cf.~\cite{BD1,CZ,LT1,LT2,PWX,TW,Za})
\[
\dot y=Ay+Bu,\qquad y(0)=y_0 \in H,
 \]
which is  null controllable in time $ T_0 >0$ (hence also for every
larger time $ T>T_0 $).  This null controllable
 system  is
NCVE when for every $ y_0 $ and $ \ZEP>0 $ there exist a time $ T $
and a control $ u $ which steers the initial state $y_0$
 to zero in
time $ T $ and, furthermore, its $ L^2(0,T; U) $-norm
 is less then $
\ZEP $.
 This concept has been already
 applied in some specific situations (see~\cite{Ic1,Ic2})
 and partially
 extended to the Banach space setting in~\cite{vanNeerve}.
Moreover, applications of NCVE property to Ornstein-Uhlenbeck
processes are given in~\cite{PZ2}.

The key result   in~\cite{PZ}, i.e., Theorem~1.1,  shows that, under
suitable properties on the operator $ A $ stated
 below, NCVE holds
if and only if the system is null controllable and furthermore the
spectrum of $ A $ is contained in the \emph{closed} half plane $\{
\zreal\zl \leq 0 \}$.

The goal of this paper is to extend this result to a large class of
boundary and point control systems (see Hypothesis 1.1), which essentially
includes all the classes of   systems whose null controllability has
been studied up to now. Our main results are Theorems~\ref{primo}
 and~\ref{secondo}.
  Moreover,
Corollary~\ref{coro:rema:nounif}
combines these results and gives a necessary and sufficient
conditions for NCVE,
which applies in  the cases   most frequently
encountered   in
applications.  Finally,  Section 3  provides  applications of our
main results.  In particular we establish NCVE for  boundary control problems
involving systems of  parabolic  equations recently considered in
\cite{CARA}.

The proofs that we give are based on ideas different  from those
in~\cite{PZ}. Moreover conditions imposed  for the sufficiency part
are weaker from those used in~\cite[Theorem~1.1]{PZ},  in the case
of distributed control systems.

Now we describe the notations and the class of
systems we are studying.

The main spaces in this paper are Hilbert, and are identified with their
duals unless explicitly stated.
The notations   are standard. For example,
 ${\mathcal L}(H,K)$ denotes the Banach space of all bounded linear
 operators from $H$ into $K$ endowed with the operator norm.

Let $ H $ be a Hilbert space with inner product $\langle \cdot,
\cdot \rangle $ and norm $|\cdot |$ and let $ A $ be a generator of
a $ C_0 $-semigroup on $ H $. Due to the fact that the spectrum of $
A $ has a   role in our arguments, we assume from the outset that $
H $ is a \emph{complex} Hilbert space.

Let $ A^* $ be the Hilbert space adjoint of $ A $.
Its domain with the graph norm
\[
| y|^2=\langle y,y\rangle+\langle A^* y,A^* y\rangle
 \]
is a Hilbert space \emph{which is not identified with its dual.
}
 It is well known
that $ \DASP $ (the dual of the Hilbert space ${\rm dom}\,A^*$) is a
Hilbert space and
\[
({\rm dom}\,A^*)\subset H=H'\subset \DASP
 \]
(with continuous and dense injections). Moreover,   $ A $
 admits an
extension $ \A $  to $\DASP   $, which generates a $ C_0 $-semigroup
$e^{\A t}$ on $ \DASP $. The domain of such extension is equal to $
H $ (see~\cite[Section 0.3]{LT1}, \cite[Chapter
 3]{BD1} and~\cite{TW}; see also
 Appendix~\ref{AppBASICsetting}).

The norm in $ \DASP $ is denoted by  $ |\cdot|_{-1} $,
  and it is
 useful to recall that $ | y|_{-1}  $ and $ | (\omega I - \A)^{-1}
y|_{} $ are equivalent norms on $ \DASP $, for every $
\omega\in\rho(A)   = \rho (\A)=\overline{  \rho(A^*)}$ (here $\rho $
 indicates
 the resolvent set and the overbar denotes the complex conjugate).
 In other words,
 $ \DASP $ is the completion of $H$ with respect to the norm
  $ | (\omega I - A)^{-1}
 \cdot |_{}$, for any $\omega \in \rho(A)$.

Let $ U $ be an Hilbert space. A ``control'' is an element of $ L^2_{\rm loc}(0,+\infty;U) $. Let
$ B\in{\mathcal L}(U,({\rm dom}\, A^*)') $ and let us consider the
control process on $ \DASP $ described by
\begin{eqnarray} \label{base}
\dot y = \A y + Bu, \qquad y(0) = y_0 \in H\,.
\end{eqnarray}
This equation makes sense in $ \DASP $, for every $
y_0\in\DASP $, but we only consider  initial conditions $ y_0\in H
$. It is known that the transformation
\begin{equation}
\ZLA{eq:TrasfCONTROevolNELduale}
u(\cdot)\ \longrightarrow\  ( L u)(t)
\qquad \text{where} \;\;\;
 ( L u)(t) :=
\int_0^t e^{\A(t-s)}Bu(s)\ZD s
\end{equation}
 is continuous from $ L^2(0,T;U) $ into $ C([0,T];\DASP) $,
 for every $ T>0 $.
The class of systems we study is
 identified by pairs $ (A,B) $ with the following property:
\begin{Hypothesis}\ZLA{Hyp:0} We
have $ B\in{\mathcal L}(U,({\rm dom}\, A^*)') $ and,
 for every $ T>0 $, the
 transformation~(\ref{eq:TrasfCONTROevolNELduale})
is linear and continuous from $ L^2(0,T;U) $ into $ L^2(0,T;H) $.
\end{Hypothesis}
Clearly,  the case of distributed controls, i.e., $B \in {\mathcal
L}(U,H)$, fits Hypothesis~\ref{Hyp:0} (in such case,
 the
 transformation~(\ref{eq:TrasfCONTROevolNELduale})
is linear and continuous from $ L^2(0,T;U) $ into $C([0,T];H) )$.
Examples of boundary control systems which satisfy our condition are
in Section~\ref{sec:ClassesSYSTEMS}.

>From now on we consider
 $\omega \in \rho (A)$, \emph{which is fixed once and for all,}
and  introduce the operator
\begin{align} \label{dd}
D= ( \omega I-\A)^{-1}B\in {\mathcal L}(U,H)\,.
 \end{align}
 By definition, the solution of system~(\ref{base}) is
\begin{equation}
\label{eq:SoluBASE2}
 y^{y_0,u}(t)=e^{At}y_0 + \int_0^t e^{\A(t-s)} Bu(s)\ZD s=
 e^{At}y_0 + \int_0^t e^{\A(t-s)}(  \omega I-\A) D u(s)\ZD s\,.
\end{equation}
This is a continuous $ \DASP $-valued function and belongs to $
L^2_{\rm loc}(0,+\infty;H) $ thanks to  Hypothesis ~\ref{Hyp:0}.
Integration by parts shows that:
\begin{Lemma}\ZLA{Lemma:ContY}
If $ u\in C^1([0,+\infty);U) $
 then  $ y^{y_0,u} $ belongs to $ C([0,+\infty);H) $.
\end{Lemma}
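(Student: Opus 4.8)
The plan is to reduce everything to the convolution term, since the free term $e^{At}y_0$ is already a continuous $H$-valued function (the restriction of $e^{\A t}$ to $H$ is exactly the original $C_0$-semigroup $e^{At}$ on $H$). Using $B=(\ZOM I-\A)D$ from~(\ref{dd}), I would split the convolution as
\[
\intt e^{\A(t-s)}Bu(s)\ZD s \;=\; \ZOM\intt e^{\A(t-s)}Du(s)\ZD s \;-\; \intt e^{\A(t-s)}\A Du(s)\ZD s .
\]
In the first integral $Du(s)\in H$ and $e^{\A(t-s)}Du(s)=e^{A(t-s)}Du(s)\in H$, so it defines a continuous $H$-valued function of $t$. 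The second integral is the dangerous one: a priori $\A Du(s)\in\DASP$ only, so its integrand takes values in $\DASP$ rather than in $H$, and this is exactly the obstruction to $H$-valued continuity.

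The heart of the argument is to remove the unbounded operator from this second integral by integration by parts, which is where the hypothesis $u\in C^1$ is used. Since $Du(s)\in\mathrm{dom}\,\A=H$ for all $s$ and $s\mapsto Du(s)$ is $C^1$ into $H$ with derivative $Du'(s)$, the product rule gives that the $\DASP$-valued map $s\mapsto e^{\A(t-s)}Du(s)$ is differentiable with
\[
\frac{d}{ds}\,e^{\A(t-s)}Du(s) = -e^{\A(t-s)}\A Du(s) + e^{\A(t-s)}Du'(s).
\]
Integrating this identity over $[0,t]$ by the fundamental theorem of calculus in $\DASP$ and rearranging, I obtain
\[
\intt e^{\A(t-s)}\A Du(s)\ZD s = e^{\A t}Du(0) - Du(t) + \intt e^{\A(t-s)}Du'(s)\ZD s .
\]

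Now every term on the right is a continuous $H$-valued function of $t$: $Du(t)\in H$ is continuous since $u$ is continuous and $D$ is bounded; $e^{\A t}Du(0)=e^{At}Du(0)\in H$ is continuous; and in the last integral $Du'(s)\in H$ with $e^{\A(t-s)}Du'(s)=e^{A(t-s)}Du'(s)\in H$, which depends continuously on $t$ exactly as the first integral above (here the continuity of $u'$ enters). Substituting back, the convolution term is a sum of continuous $H$-valued functions, hence so is $y^{y_0,u}$, which proves the claim. The main obstacle is the middle step: one must justify the product-rule differentiation and the integration \emph{inside} the larger space $\DASP$, and then verify that the boundary terms $Du(t)$ and $e^{\A t}Du(0)$ indeed land back in $H$; the $C^1$ assumption is precisely what trades the unbounded $\A Du(s)$ for the harmless term $e^{A(t-s)}Du'(s)$.
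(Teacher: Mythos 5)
Your argument is correct and is exactly the paper's intended proof: the paper dispatches this lemma with the single phrase ``Integration by parts shows that:'', and your decomposition $B=(\omega I-\A)D$ followed by differentiating $s\mapsto e^{\A(t-s)}Du(s)$ in $\DASP$ to trade $\A Du(s)$ for $e^{A(t-s)}Du'(s)$ plus boundary terms is precisely that integration by parts, carried out in full. The one step you flag as needing justification is indeed fine, since $\A\in{\mathcal L}(H,\DASP)$ makes the graph norm of $\A$ on ${\rm dom}\,\A=H$ equivalent to the $H$-norm, so $C^1$ into $H$ suffices for the product rule.
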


Now we give the definitions of null controllability and NCVE,
adapted to our system, by taking into account the fact that if $
u\in L^2 _{{\rm loc}}(0,+\infty;U) $
 then the  integrals
 in~(\ref{eq:SoluBASE2}) belong to   $ L^2 _{{\rm loc}}(0,+\infty ; H) $,
 and point-wise evaluation in $ H $ in general is meaningless.

\begin{Definition}\ZLA{defi:CONTROREST}
 We say that $ y_0\in H $ can be steered to the rest in time (at
most) $ T $ if there exists a control $ u\in L^2 _{{\rm
loc}}(0,+\infty;U) $ whose support is contained in $ [0,T] $ and
such that the support of the corresponding
solution~(\ref{eq:SoluBASE2})
 is contained
 in $ [0,T] $ too.

  System~(\ref{base}) is null controllable
 if   every $ y_0\in H $
  can be steered to the rest in a  suitable
   time  $ T_{y_0} $ at most.

 System~(\ref{base}) is null controllable in time (at most) $ T $
 if every $
y_0\in H $ can be
 steered to the rest in time at most $ T $.
\end{Definition}
 In connection
with this definition see also Lemma~\ref{Lemma:uniFOcontrolTIME}.

 Controllability in time $ T $ implies controllability at every
larger time.  Note that
 if $ u $ steers $ y_0 $ to the rest in time at most $
T $, then we have
\[
\int_0^T e^{\A(t-s)}Bu(s)\ZD s=-e^{At}y_0,\qquad {\rm a.e.} \quad
t>T,
 \]
 and so the integral
is represented by a continuous function for $ t>T $.

 The control $ u $ which steers $ y_0 $ to zero in time $ T $ needs not be unique.
 Then, we define:

 \begin{Definition}\ZLA{defi:NCVE}
 Let $ y_0 \in H$ be
an element which can be steered to the rest. We say that this
element is NCVE if  for every $\ZEP>0  $ there exists a control $
u_\ZEP $ such that
\begin{itemize}
\item it steers $ y_0 $ to the rest in time
at most $ T_\ZEP $  (i.e., the control time $ T $
  depends on $ \ZEP $,   $ T=T_\ZEP $,
  and the support of $ u $ is in $ [0,T_{\ZEP}] $);
\item the $ L^2(0,+\infty;U) $ norm of $ u $ is less then $ \ZEP$:
\[
\int_0^{+\infty} | u(s)|^2\ZD s=\int_0^{T_\ZEP} |u(s)|^2\ZD s\leq
\ZEP^2\,.
 \]
\end{itemize}

If every element of $ H $ is NCVE, then we say that
system~(\ref{base}) is NCVE.
 \end{Definition}

 As a variant to
  Definitions~\ref{defi:CONTROREST}
  and~\ref{defi:NCVE}, we introduce also:
 \begin{Definition}
 Let $ \mathcal D $ be a subspace of $ H $.
 If every initial condition $ y_0\in {\mathcal D}  $ can be steered to the rest in time $ T $ then we say that the system is null controllable on $ {\mathcal D} $ in time $ T $ (note that we don't require that the trajectory which joins $ y_0 $ to zero remains in the set $ \mathcal D $).

We say that the system is NCVE on $ {\mathcal D} $
   if
for every $ y_0\in {\mathcal D} $ and every $\ZEP>0  $ there exists a
control $ u_\ZEP $ such that
\begin{itemize}
\item it steers $ y_0 $ to the rest in time $ T_{\epsilon} $
 (i.e., the control time $ T $
  depends on $ \ZEP $, i.e., $ T=T_\ZEP $);
\item the $ L^2(0,+\infty;U) $ norm of $ u $ is less then $ \ZEP$:
\[
\int_0^{+\infty} | u(s)|^2\ZD s=\int_0^{T_\ZEP} |u(s)|^2\ZD s\leq
\ZEP^2\,.
 \]
\end{itemize}
 \end{Definition}

\subsection
{\ZLA{sec:ClassesSYSTEMS}Classes of systems which fit our framework}

Essentially, controllability    has been studied for ``parabolic''
and ``hyperbolic''  type  systems.

\smallskip \noindent (i)
 {\it Parabolic systems\/} can be described, in a unified way,
as follows.

 The operator $ A $ generates a holomorphic semigroup and,
following ~\cite[Section 0.4 and Chapter 1]{LT1},
 there exists $\omega \in
\rho(A) = \rho (\A)$ and $\gamma \in [0,1)$
   such that
\begin{align} \label{dom}
B \in {\mathcal L}\left  (U , \big(\text {dom}\, (\omega -A^*)^{\gamma} \big)'\right ).
\end{align}
 Note that \eqref{dom} implies the estimate
\begin{eqnarray} \label{DiseSINGOLARE}
\|e^{ \A t} B\|_{{\mathcal L}(U, H)} \leq \frac {Me^{\omega_1 t}}
{t^{\gamma}}, \;\;\; t
>0.
\end{eqnarray}
for some  $M>0$, $\omega_1 \in \R$ (see~\cite[Section 0.3]{LT1},
\cite[Chapter 3]{BD1} and~\cite{TW}; see also
  Appendix~\ref{AppBASICsetting}); recall that
$\big(\text {dom}\, (\omega -A^*)^{\gamma} \big)' \subset \DASP$ with
continuous and dense injection).

Using \eqref{DiseSINGOLARE}, one can show that {\it  Hypothesis~\ref{Hyp:0} holds in this case.\/} Indeed, the integral
in~(\ref{eq:SoluBASE2})
 does not converge in the space $ H $ for every $ t $
 but,
 using the Young inequality for convolutions,
  it defines an $H$-valued
 locally square  integrable function for every locally square
integrable input $ u $.  Formula~(\ref{eq:SoluBASE2})
 defines  the unique
 solution of eq.~(\ref{base})  with values in $H$,
 which however does not have a pointwise sense in general.

A recent example of parabolic system will be  considered in Section
3.

The singular inequality~(\ref{DiseSINGOLARE}) holds for certain
important classes of interconnected systems, as studied for example
in~\cite{BUCCI,L}, even if they do not generate holomorphic
semigroups.

\smallskip \noindent (ii)
{\it Hyperbolic systems \/}
are  further important examples of systems which fit our framework,
see ~\cite{LT2,Kom} and~\cite[p.~122]{TW}.  In spite
of the fact that this class lacks of a plain unification,   it turns
out that in this case the following important property, first proved
for the wave equation with Dirichlet boundary control
in~\cite{LASTRiDIRIC1,LASTRiDIRIC}, holds: the function $ y(t) $ is
even continuous in time.

\smallskip We listed  earlier  systems which fit
our Hypothesis~\ref{Hyp:0}. However,  null controllability cannot
be studied ``in abstract'': it has to be studied separately in
concrete cases and these are too many to be cited here. So, we
confine ourselves to note that controllability for several
hyperbolic type problems is studied
 in~\cite{AVDivan,LasieTriggContro,LIONS,Kom,KomoLORE,Soriano}; controllability
for parabolic type equations is studied
in~\cite{CARA,MIl,TENENBAUM-TUCSNAK,XUZhang} and references therein.
 Note that controllability for heat-type equations is often achieved
using smooth controls, so that the resulting trajectory $ y(t) $ is
even continuous.

An
overview  on controllability both of hyperbolic and parabolic type
equations is~\cite{LIzhang,ZUAZUA}.

\subsection{Key results and discussion}

As we  have already said, our point of departure is
 paper~\cite{PZ} which proves the following result,
in the case of distributed controls, i.e., the case that
 $ {\rm im}\,
D\subseteq {\rm dom}\, A $ (recall that $D$ is defined in
\eqref{dd}) so that $ B\in {\mathcal L}(U,H) $: {\em under suitable
assumptions on the spectral properties of the operator $ A $, NCVE
is equivalent to null controllability at some time $ T>0 $. \/} An
interesting interpretation of this result is that  for this class
of systems\/ {\em  NCVE does not
depend on the control  operator provided that this operator is so
chosen to guarantee null controllability at a certain time $ T $.\/}

 Now we state our main results, which we split
 in Theorems~\ref{primo}
 and~\ref{secondo}. We don't try to unify them, since they are proved
using different ideas but, in the most important cases for the
applications, they can be combined to get a necessary and sufficient
condition for NCVE, see Corollary~\ref{coro:rema:nounif}.

  We recall that
 a {\em  reducing subspace\/}
 $ E $
 for a $ C_0 $-semigroup $e^{A t}$ on $ H $ is a closed subspace
  of $H$ such that both $ E $
  and one of its complementary subspaces are invariant for the semigroup:
 \[
 e^{At}x\in E, \qquad \forall x\in E\,,\ \forall t\geq 0
  \]
  and the same for one complement of $ E $.

It is possible to prove   that the restriction of $ e^{A t} $ is a $
C_0 $-semigroup on $ E $ and that $ A\left ( E\cap \left ({\rm
dom}\, A\right )  \right ) \subseteq E $  (the restriction of $ A $
to $ E $ is the infinitesimal generator of $ e^{At}  $ on $ E $).

 The
necessary condition for NCVE is given by the next theorem:

\begin{Theorem} \label{primo}
 Assume Hypothesis~\ref{Hyp:0} and  suppose
  the existence of a reducing subspace $ E $ for
$e^{At}$, such that  $e^{-At}$ generates a $ C_0 $-\emph{group} on $
E $ which  is \emph{exponentially stable    }    (for $ t\to+\infty
$). Then, the system \eqref{base} is not NCVE.
\end{Theorem}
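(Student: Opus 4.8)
The plan is to exhibit a single initial state $y_0\in E$ whose transfer to rest always costs at least a fixed positive amount of energy, independently of the control time $T$; since NCVE of the system requires \emph{every} state to be transferable with vanishing energy, this already shows the system is not NCVE. Throughout, let $P$ be the projection of $H$ onto $E$ along the invariant complement, so that $P$ commutes with $e^{At}$, hence with $A$, and consequently $P^*$ commutes with $e^{A^*t}$ and with $A^*$. The hypothesis that $e^{-At}$ is an exponentially stable group on $E$ reads $\|e^{-At}|_E\|\le Me^{-\omega t}$ for some $M,\omega>0$; taking adjoints transfers this to ${\rm range}(P^*)$, giving the backward decay $|e^{-A^*t}\zeta|\le M\|P\|e^{-\omega t}|\zeta|$ for every $\zeta\in{\rm range}(P^*)$, where $e^{-A^*t}$ denotes the group generated on ${\rm range}(P^*)$.

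First I would record and dualize the steering identity. If a control $u$ steers $y_0$ to rest in time $T$, then $e^{AT}y_0=-\int_0^T e^{\A(T-s)}Bu(s)\ZD s$ in $H$. Pairing with any $z\in{\rm dom}\,A^*$ and shifting the extended semigroup and $B$ onto the test vector through $B^*\in{\mathcal L}({\rm dom}\,A^*,U)$ yields
\[
\langle e^{AT}y_0,z\rangle=-\int_0^T\langle u(s),B^*e^{A^*(T-s)}z\rangle_U\ZD s,
\]
so by Cauchy--Schwarz the energy is bounded below by $|\langle e^{AT}y_0,z\rangle|^2\big/\int_0^T|B^*e^{A^*(T-s)}z|^2\ZD s$. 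The entire proof then reduces to choosing $y_0$ and a $T$-dependent test vector $z=z_T$ that make the numerator a nonzero constant and the denominator bounded uniformly in $T$.

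The decisive choice is $z_T:=e^{-A^*T}\zeta$ with $\zeta\in{\rm dom}\,A^*\cap{\rm range}(P^*)$ (a dense subset of ${\rm range}(P^*)$ preserved by the backward group, so that indeed $z_T\in{\rm dom}\,A^*$). Then $e^{A^*(T-s)}z_T=e^{-A^*s}\zeta$, and the denominator becomes $\int_0^T|B^*e^{-A^*s}\zeta|^2\ZD s$, whose integrand no longer depends on $T$. For $y_0$ I would take a nonzero $\eta\in E\cap{\rm dom}\,A$ and set $\zeta=P^*\eta$, so that $\langle\eta,\zeta\rangle=\langle\eta,P^*\eta\rangle=|\eta|^2\neq0$ and $\zeta\in{\rm range}(P^*)$ (a density perturbation keeps the pairing nonzero while placing $\zeta$ in ${\rm dom}\,A^*$). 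For the numerator, the map $t\mapsto\langle e^{At}\eta,e^{-A^*t}\zeta\rangle$ has vanishing derivative, whence $\langle e^{AT}\eta,z_T\rangle=\langle\eta,\zeta\rangle=|\eta|^2$ is constant in $T$.

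The main obstacle is the uniform bound on the denominator $\int_0^T|B^*e^{-A^*s}\zeta|^2\ZD s$. Here I would combine two ingredients: (i) Hypothesis~\ref{Hyp:0} is an admissibility condition whose dual says the observation map $v\mapsto B^*e^{A^*\cdot}v$ is bounded from $H$ into $L^2(0,1;U)$, i.e. $\int_0^1|B^*e^{A^*\tau}v|^2\ZD\tau\le C|v|^2$; and (ii) the backward decay $|e^{-A^*s}\zeta|\le M\|P\|e^{-\omega s}|\zeta|$ on ${\rm range}(P^*)$. Splitting $[0,T)$ into unit intervals and writing, on $[n,n+1)$, $e^{-A^*s}\zeta=e^{A^*(n+1-s)}v_n$ with $v_n:=e^{-A^*(n+1)}\zeta$, estimate (i) bounds each piece by $C|v_n|^2$ while (ii) gives $|v_n|^2\le M^2\|P\|^2e^{-2\omega(n+1)}|\zeta|^2$; summing the resulting geometric series yields $\int_0^T|B^*e^{-A^*s}\zeta|^2\ZD s\le C'|\zeta|^2$ for all $T$. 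Consequently any control steering $\eta$ to rest satisfies $\int_0^{+\infty}|u(s)|^2\ZD s\ge|\eta|^4/(C'|\zeta|^2)=:\delta>0$. Since this lower bound is independent of $T$, the state $\eta$ can never be transferred to rest with energy below $\sqrt{\delta}$ (and if it cannot be transferred at all, the conclusion is immediate), so $\eta$ is not NCVE and therefore neither is the system.
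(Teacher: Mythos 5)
Your overall dual scheme is sound, and its skeleton (a steering identity, Cauchy--Schwarz, and a $T$-independent lower bound for the energy of any control steering a fixed $\eta\in E$ to rest) parallels the paper's proof; the choices $z_T=e^{-A^*T}\zeta$, the constancy of $\langle e^{AT}\eta,e^{-A^*T}\zeta\rangle$, and the transfer of backward exponential stability to ${\rm range}(P^*)$ are all correct. The genuine gap is your ingredient (i). Hypothesis~\ref{Hyp:0} only asserts that $u\mapsto Lu$ is bounded from $L^2(0,T;U)$ into $L^2(0,T;H)$; its dual statement is boundedness of the \emph{time-averaged} map $g\mapsto \int_{\cdot}^T B^*e^{A^*(t-\cdot)}g(t)\ZD t$ from $L^2(0,T;H)$ into $L^2(0,T;U)$, and this does \emph{not} yield the pointwise admissible-observation estimate $\int_0^1|B^*e^{A^*\tau}v|^2\ZD\tau\le C|v|^2$ for all $v\in H$. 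That estimate is dual to the strictly stronger requirement that the final-state maps $u\mapsto\int_0^t e^{\A(t-s)}Bu(s)\ZD s$ take values in $H$ with a bound in $H$ --- and the whole point of the paper's framework is to cover situations where this fails: for parabolic problems satisfying \eqref{DiseSINGOLARE} with $\gamma\in[1/2,1)$ (e.g.\ the heat equation with Dirichlet boundary control), $Lu\in L^2(0,T;H)$ by Young's inequality while $\|B^*e^{A^*\tau}\|_{{\mathcal L}(H,U)}\sim\tau^{-\gamma}$ is not square integrable near $0$ and the observation estimate from $H$ is false. So the unit-interval splitting, as written, rests on an estimate your hypotheses do not provide.

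Fortunately the gap is local and repairable \emph{without any admissibility}: your $\zeta$ already lies in ${\rm dom}\,A^*\cap{\rm range}(P^*)$, and since $P^*$ commutes with $A^*$ one also has $A^*\zeta\in{\rm range}(P^*)$ and $A^*e^{-A^*s}\zeta=e^{-A^*s}A^*\zeta$; hence the backward decay holds in the graph norm, $\|e^{-A^*s}\zeta\|_{{\rm dom}\,A^*}\le M\|P\|e^{-\omega s}\|\zeta\|_{{\rm dom}\,A^*}$, and the trivial bound $|B^*w|_U\le\|B\|_{{\mathcal L}(U,({\rm dom}\,A^*)')}\,\|w\|_{{\rm dom}\,A^*}$ gives $\int_0^{\infty}|B^*e^{-A^*s}\zeta|^2\ZD s<+\infty$ directly, with a constant depending on the graph norm of $\zeta$ (harmless, since $\zeta$ is fixed). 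With that substitution your proof is complete and genuinely different in mechanics from the paper's, which avoids duality altogether: there one regularizes the steering identity by $(\omega I-\A)^{-1}$, so the control operator becomes the \emph{bounded} $D=(\omega I-\A)^{-1}B\in{\mathcal L}(U,H)$, applies the projection $P_E$ (killing the $E^C$-component), pulls back with the group on $E$ to get $(\omega I-A)^{-1}y_0=-\int_0^T e^{-As}P_EDu(s)\ZD s$, and concludes by Schwarz and the exponential decay --- no adjoints or test vectors, and the lower bound is obtained simultaneously for every $y_0\in E$, whereas your argument produces it state by state (which suffices, since failure of NCVE at one state is enough).
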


A consequence is:
\begin{Corollary} Assume Hypothesis~\ref{Hyp:0},
If    $ \sigma(A) $ has an isolated point with \emph{positive} real
part, then the system \eqref{base} is {\em not\/} NCVE.
\end{Corollary}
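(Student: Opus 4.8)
The plan is to deduce the Corollary from Theorem~\ref{primo} by manufacturing, out of the isolated spectral point, a reducing subspace on which the backward semigroup is an exponentially stable group. Let $\lambda_0\in\sigma(A)$ be the isolated point with $\Re\lambda_0>0$, and let $\Gamma$ be a small positively oriented circle around $\lambda_0$ enclosing no other point of $\sigma(A)$. First I would introduce the Riesz (spectral) projection
\[
P=\frac{1}{2\pi i}\oint_\Gamma (zI-A)^{-1}\ZD z,
\]
a bounded projection that commutes with the resolvent and hence with the semigroup $e^{At}$. Since $\lambda_0\in\sigma(A)$ is isolated, the singular part of the Laurent expansion of the resolvent at $\lambda_0$ cannot vanish, so $P\neq0$. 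Setting $E:=PH$ and noting that the complementary subspace $\ker P=(I-P)H$ (the image of the complementary spectral projection) is likewise invariant, $E$ is a nontrivial reducing subspace for $e^{At}$.

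Next I would record the standard spectral facts about $E$ (e.g.\ Kato, or Dunford--Schwartz): $E=PH$ is contained in ${\rm dom}\,A$, the restriction $A_E:=A|_E$ is a \emph{bounded} operator on $E$, and $\sigma(A_E)=\{\lambda_0\}$. As the excerpt already records for reducing subspaces, $A_E$ is the generator of $e^{At}|_E$ on $E$. Because $A_E$ is bounded, $-A_E$ generates the uniformly continuous group $t\mapsto e^{-A_E t}$ on $E$; this is precisely the $C_0$-group generated by $e^{-At}$ on the reducing subspace $E$ required in Theorem~\ref{primo}.

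It then remains to verify exponential stability as $t\to+\infty$. Here I would invoke the spectral mapping theorem, valid for bounded generators: $\sigma(e^{-A_E t})=e^{-t\,\sigma(A_E)}=\{e^{-t\lambda_0}\}$, so the spectral radius of $e^{-A_E t}$ equals $e^{-t\,\Re\lambda_0}$. Since for a uniformly continuous semigroup the growth bound coincides with the spectral bound $\sup\{\Re\mu:\mu\in\sigma(-A_E)\}=-\Re\lambda_0<0$, one obtains $\|e^{-A_E t}\|\le M e^{-\delta t}$ for some $\delta\in(0,\Re\lambda_0)$ and all $t\ge0$. Thus $e^{-At}$ generates an exponentially stable $C_0$-group on the nontrivial reducing subspace $E$, and Theorem~\ref{primo} yields that the system~\eqref{base} is \emph{not} NCVE.

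The steps above are routine spectral theory; the only genuine point requiring care is the passage from ``$\lambda_0$ isolated in $\sigma(A)$'' to ``$A_E$ is bounded with $\sigma(A_E)=\{\lambda_0\}$ on a nontrivial reducing subspace $E$''. I expect this to be the main (though classical) obstacle: for the \emph{unbounded} generator $A$ one must check that the Riesz projection maps $H$ into ${\rm dom}\,A$ and that the restriction of the closed operator $A$ to $E=PH$ is bounded. Once this is established, both the group property and its exponential decay follow at once from the boundedness of $A_E$ together with $\sigma(A_E)=\{\lambda_0\}$.
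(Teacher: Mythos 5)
Your proposal is correct and follows essentially the same route as the paper: the paper's one-line proof simply cites~\cite{PZ} for the existence of the reducing subspace $E$ in Theorem~\ref{primo}, and the construction there is exactly your Riesz spectral projection $P=\frac{1}{2\pi i}\oint_\Gamma(zI-A)^{-1}\ZD z$ around the isolated point, with $E=PH$, $A|_E$ bounded, $\sigma(A|_E)=\{\lambda_0\}$, and hence an exponentially stable backward group on $E$. You have merely written out in full the classical spectral-decomposition details (including the genuinely relevant points: $PH\subset{\rm dom}\,A$, boundedness of the restriction, and $P\neq 0$) that the paper leaves to the reference.
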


In fact,~\cite{PZ}
proves the existence of the subspace $ E $ in Theorem~\ref{primo},
under the assumption of the corollary.

 Now we come to the second theorem.
We recall that $x \in H$
 is a generalized eigenvector of $A$ associated to the
 eigenvalue $\lambda \in \C$ if
 $x \in \bigcup_{k \ge 1}$Ker$[(\lambda I - A)^k]$ and we recall the standard notation   for the spectral bound
\[
s(A)=\sup\{\zreal\lambda\,,\quad \lambda\in\sigma(A)\}.\,
 \]
where $s(A) = - \infty$ if $\sigma(A)$ is empty.

 Now we introduce the
following assumption, which slightly generalizes the one
in~\cite[(ii)  Hypothesis 1.1]{PZ}.
\begin{Hypothesis} \label{serve}
There exist
closed linear subspaces $H_s$, $H_1$ of $H$ such that:
\begin{itemize}
\item $H= H_s \oplus H_1$;
\item for every $ x\in H_s $
we have
\[
\lim _{t\to+\infty} e^{At} x=0\,;
 \]
\item the subspace $ H_1 $
is invariant for the semigroup and
the set of all the generalized eigenvectors of $A$ contained in $H_1$ is
linearly dense in $H_1$.
\end{itemize}
In the definition the subspace $H_1$ can be $\{0\}$.
If $\sigma(A)  = \emptyset$ we set $H_1 =\{0\}$.
\end{Hypothesis}

\noindent We note that the assumption in~\cite{PZ} is slightly
stronger in that~\cite{PZ} assumes that $ H_s $ is an invariant
subspace for the semigroup, and that the semigroup restricted to $
H_s  $ is exponentially stable.

  We have:

\begin{Theorem} \label{secondo}
 Assume  Hypotheses~\ref{Hyp:0} and~\ref{serve}
 and furthermore suppose that $ s(A)\leq 0 $.
  If   system~(\ref{base}) is
  null controllable at some time $T>0$,
then     it is NCVE.
\end{Theorem}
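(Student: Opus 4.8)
The plan is to prove Theorem~\ref{secondo} by reducing the general NCVE problem to a controllability estimate on the two pieces $H_s$ and $H_1$ furnished by Hypothesis~\ref{serve}, exploiting the fact that $s(A)\le 0$ (no spectrum in the open right half plane) to control the energy on the "generalized eigenvector" part $H_1$.

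The first step is to handle the stable part. Given $y_0\in H_s$, since $e^{At}y_0\to 0$ as $t\to+\infty$, for any $\delta>0$ I can wait a time $\tau$ so that $|e^{A\tau}y_0|<\delta$; no control is used on $[0,\tau]$, so no energy is spent. The point is that steering a \emph{small} state to the rest is \emph{cheap}: because the system is null controllable at a fixed time $T$, there is a bounded steering operator (a bounded right inverse to $L u = -e^{AT}\cdot$ on the relevant trace space), so the minimal energy to drive $z$ to rest in time $T$ is bounded by $C|z|$. Thus I steer $e^{A\tau}y_0$ to zero using energy at most $C\delta$, which is $\le\varepsilon$ for $\delta$ small. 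This disposes of initial conditions in $H_s$.

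The second, harder step is the part $H_1$. Here the idea is to work with the generalized eigenvectors, which are linearly dense in $H_1$. For a single generalized eigenvector $x$ associated to $\lambda$ with $\zreal\lambda\le 0$, the free trajectory $e^{At}x$ is (a polynomial in $t$ times) $e^{\lambda t}x$, which does \emph{not} decay to zero when $\zreal\lambda=0$. The key mechanism must be that on the imaginary axis one can exploit \emph{time-averaging}: by steering with a control supported on a long interval $[0,T_\varepsilon]$ and spreading the required "impulse" over that long interval, the $L^2$ norm of the control scales like $T_\varepsilon^{-1/2}$ and hence vanishes as $T_\varepsilon\to+\infty$. Concretely, I expect to use the null controllability operator at the fixed time $T$ repeatedly, or a rescaled control, to build a control of arbitrarily small energy that cancels the oscillatory but non-decaying free evolution; the hypothesis $s(A)\le 0$ is exactly what rules out exponential growth that averaging could not defeat. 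By density and linearity of the steering construction, the estimate extends from generalized eigenvectors to all of $H_1$, and then by $H=H_s\oplus H_1$ to all of $H$.

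\textbf{The main obstacle} I anticipate is the $H_1$ step on the imaginary axis: unlike~\cite{PZ}, we cannot invoke the Riccati equation, so the vanishing of energy must be extracted directly from a quantitative null-controllability estimate combined with the long-time averaging argument, and one must check that the energy bound is \emph{uniform} in the relevant parameters (e.g.\ uniform over generalized eigenvectors of a given $\lambda$, and stable under passing through the direct-sum decomposition). Care is also needed because point evaluation of trajectories in $H$ is not available in general, so "steering to the rest" must be formulated through the support condition of Definition~\ref{defi:CONTROREST} rather than through a pointwise terminal value; I would therefore phrase the gluing of the waiting phase and the steering phase entirely at the level of controls and their supports, using Lemma~\ref{Lemma:ContY} where continuity of the trajectory is genuinely needed.
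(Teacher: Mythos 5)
Your $H_s$ step is sound: with $u=0$ on $[0,\tau]$ the trajectory is $e^{At}y_0$, genuinely continuous, and the uniform steering bound of Lemma~\ref{Lemma:stimaCONTR(OLLO} (with Remark~\ref{Rema:SuiDIVERSItempiniz}) makes the waiting-then-steering gluing legitimate, giving $Z(y_0)=0$ on $H_s$ directly. The genuine gap is the $H_1$ step, which you leave as an expectation rather than an argument, and the one concrete mechanism you sketch fails exactly where you anticipate trouble. The averaging idea does work for a true eigenvector $Ay_0=i\omega y_0$: take $u=\frac1n\sum_{k=0}^{n-1}v_k$ with $v_k$ supported on $[kT,(k+1)T]$ and steering $e^{AkT}y_0$ to rest there; since $|e^{AkT}y_0|=|y_0|$ and the supports are disjoint, the energy is $\frac1{n^2}\sum_k\|v_k\|^2\le M|y_0|^2/n\to0$. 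But for a \emph{generalized} eigenvector $y_N$ of an imaginary eigenvalue one has $e^{At}y_N=e^{i\omega t}y_N+q_N(t)$ with $|q_N(kT)|$ growing like $(kT)^N$ (formula~(\ref{eq:expone})), so the stage costs are of order $(kT)^{2N}$ and the averaged energy $\frac1{n^2}\sum_{k<n}(kT)^{2N}\sim T^{2N}n^{2N-1}$ \emph{diverges} for $N\ge1$. Your proposed $T_\ZEP^{-1/2}$ scaling therefore does not survive Jordan blocks, which is precisely where the paper's proof switches to an induction along the Jordan chain; flagging the issue is not the same as resolving it.

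The gap is repairable within your framework, but only by adding that induction: assuming the minimal energy $Z$ vanishes on ${\rm span}\{y_0,\dots,y_{N-1}\}$, split the state to be killed at stage $k$ as $e^{i\omega t_k}y_N+q_N(t_k)$; the first summand costs at most $M|y_N|^2$ \emph{uniformly in} $k$, while the second can be killed with energy below any fixed threshold on a sufficiently long stage because $Z(q_N(t_k))=0$; choosing the stages disjoint and weighting by $1/n$ then yields $Z(y_N)\le c/n\to0$. One also needs the Lipschitz property $\sqrt{Z(z+h)}\le\sqrt{Z(z)}+\sqrt{M}\,|h|$ (again from Lemma~\ref{Lemma:stimaCONTR(OLLO}, via subadditivity of $\sqrt Z$ under adding controls) to pass from the linearly dense set of generalized eigenvectors to all of $H_1$ and then through $H=H_s\oplus H_1$ — another point your outline asserts but does not verify. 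The paper takes an entirely different route that avoids this quantitative bookkeeping: it constructs the bounded operator $P\ge0$ with $I(y_0)=\langle y_0,Py_0\rangle$ (Theorem~\ref{teo:KYP}) and exploits the Linear Operator Inequality; for an imaginary eigenvalue, homogeneity in $y_0$ forces $\langle P(Lu)(t),y_0\rangle=0$, null controllability then gives $\langle y_0,Py_0\rangle=0$ hence $Py_0=0$, and the chain is handled by induction using $Pq_N(t)=0$. So your skeleton could support a genuinely different, more constructive proof, but as written the decisive case $\zreal\lambda=0$ on $H_1$ is missing.
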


The ideas used in the proof of both Theorems~\ref{primo}
and~\ref{secondo} are different from those used in the proofs of the
corresponding results in~\cite{PZ}. In particular, the proof of
Theorem~\ref{secondo} relies on
  the
{\it Yakubovich theory of the regulator problem   with stability},
    and the corresponding
{\it Linear Operator Inequality,} that can be found
 in~\cite{LW,PandLOI1,PandLOI2,PandLOI3}.

 Clearly,  the spectral condition in Hypothesis~\ref{serve} is
satisfied by most of the systems encountered in practice,  when the
``dominant part'' of the spectrum is a sequence of eigenvalues (in
particular, if  $ A $  has compact resolvent). Hence, for all these
systems, Theorems~\ref{primo} and~\ref{secondo} can be combined to
get a \emph{necessary and sufficient} condition for NCVE which
depends only on the spectrum of $ A $, provided that  null
controllability holds. For example we can state the following
Corollary. Recall that a $C_0$-semigroup $e^{At}$ is called
\emph{eventually compact} if there exists $t_0>0$ such that
 $e^{At}$ is a compact operator for any $t \ge t_0$; moreover any
 differentiable semigroup such that
  its generator has compact resolvent is in particular
  an eventually compact semigroup, see
  \cite[Theorem 3.3, page 48]{Pa}.

\begin{Corollary}\ZLA{coro:rema:nounif}
 Assume Hypothesis~\ref{Hyp:0} and suppose that the semigroup is
  eventually compact.
  If $s(A) \le 0$ then null controllability and NCVE  are
 equivalent properties. When $ {s(A)>0}$
 the system is not NCVE.
\end{Corollary}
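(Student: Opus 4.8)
The plan is to deduce the Corollary directly from the two main theorems, the only real work being to check that eventual compactness of the semigroup automatically places us inside their hypotheses. Throughout I would use the standard spectral theory of eventually compact (in particular eventually norm-continuous) semigroups, as in \cite{Pa}: the spectrum $\sigma(A)$ consists of isolated eigenvalues of finite algebraic multiplicity, for every $\beta\in\R$ the set $\sigma(A)\cap\{\zreal\lambda\ge\beta\}$ is finite, the spectral mapping theorem holds, and the growth bound of the semigroup coincides with the spectral bound $s(A)$. With these facts in hand I would split into the two regimes $s(A)>0$ and $s(A)\le 0$.

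Consider first $s(A)>0$. Applying the finiteness property with $\beta=s(A)/2>0$, the set $\sigma(A)\cap\{\zreal\lambda\ge\beta\}$ is finite and non-empty, so the supremum defining $s(A)$ is attained at some eigenvalue $\lambda_0$ with $\zreal\lambda_0=s(A)>0$. Since every point of $\sigma(A)$ is isolated, $\lambda_0$ is an isolated point of the spectrum with positive real part, and the Corollary stated immediately after Theorem~\ref{primo} gives at once that the system is \emph{not} NCVE. This settles the last assertion.

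Now suppose $s(A)\le 0$. The implication ``NCVE $\Rightarrow$ null controllability'' is essentially definitional: if every $y_0\in H$ is NCVE then in particular every $y_0$ can be steered to the rest, which is null controllability in the sense of Definition~\ref{defi:CONTROREST}, and Lemma~\ref{Lemma:uniFOcontrolTIME} upgrades this to null controllability at some uniform time $T>0$. For the converse I must verify Hypothesis~\ref{serve}, after which Theorem~\ref{secondo} (its remaining hypotheses $s(A)\le0$ and Hypothesis~\ref{Hyp:0} being in force) yields NCVE from null controllability. I would build the decomposition by Riesz spectral projections. Set $\sigma_0:=\sigma(A)\cap\{\zreal\lambda=0\}$; by the finiteness property with $\beta=0$ (and since $s(A)\le 0$ forbids $\zreal\lambda>0$) this is a finite set of isolated eigenvalues. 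Let $H_1$ be the associated finite-dimensional spectral subspace, i.e. the span of the corresponding generalized eigenvectors, setting $H_1=\{0\}$ when $\sigma_0=\emptyset$. Then $H_1$ is invariant and is by construction the linear span of generalized eigenvectors of $A$, so the third bullet of Hypothesis~\ref{serve} holds. Let $H_s$ be the complementary spectral subspace, so that $H=H_s\oplus H_1$ with both summands invariant and $\sigma(A|_{H_s})=\sigma(A)\setminus\sigma_0$. Since $\sigma(A)\cap\{\zreal\lambda\ge-1\}$ is finite and none of its points outside $\sigma_0$ lie on the imaginary axis, there is $\epsilon_0>0$ with $\sigma(A|_{H_s})\subseteq\{\zreal\lambda\le-\epsilon_0\}$. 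The restriction of an eventually compact semigroup to the invariant subspace $H_s$ is again eventually compact, so its growth bound equals $s(A|_{H_s})\le-\epsilon_0<0$; hence $e^{At}$ is exponentially stable on $H_s$ and $\lim_{t\to+\infty}e^{At}x=0$ for every $x\in H_s$, which is the second bullet. The first bullet is just the splitting $H=H_s\oplus H_1$ already obtained, so Hypothesis~\ref{serve} holds and Theorem~\ref{secondo} gives null controllability $\Rightarrow$ NCVE, completing the equivalence.

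The one genuinely delicate point I expect is the spectral bookkeeping underlying this decomposition: one must be certain that $\sigma_0$ is finite and isolated and that the rest of the spectrum is bounded away from the imaginary axis, so that the spectral projection onto $H_1$ is well defined, produces a finite-dimensional $H_1$ spanned by generalized eigenvectors, and leaves on $H_s$ a restricted semigroup that is genuinely exponentially stable rather than merely having $s(A|_{H_s})<0$. Every one of these needs the structural consequences of eventual compactness—discreteness of $\sigma(A)$, finitely many eigenvalues in each right half-plane, and the coincidence of growth and spectral bounds; without such a hypothesis the construction of $H_s$ and $H_1$, and hence the reduction to Theorem~\ref{secondo}, could break down.
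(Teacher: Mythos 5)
Your proof is correct and follows essentially the same route as the paper: for $s(A)>0$ both arguments isolate a rightmost (isolated) eigenvalue and fall back on Theorem~\ref{primo} via its corollary, while for $s(A)\le 0$ both verify Hypothesis~\ref{serve} by a spectral projection onto finitely many rightmost generalized eigenspaces and then apply Theorem~\ref{secondo}, with eventual compactness supplying exactly the ingredients you list (discreteness of $\sigma(A)$, finiteness of the spectrum in each right half-plane, spectrum-determined growth, and exponential stability when $\sigma(A)=\emptyset$). The only cosmetic difference is the cut-off: you put only the imaginary-axis eigenvalues into $H_1$, bounding the remaining spectrum away from the axis by the finiteness property, whereas the paper takes $H_1$ spanned by all generalized eigenvectors with real part exceeding some $c<s(A)$; both choices satisfy Hypothesis~\ref{serve} for the same reasons.
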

\begin{proof}

The spectrum of $ A $ is a sequence of eigenvalues (this is well
known when $ (\ZOM I-A)^{-1} $ is compact, and it is true also if
the semigroup is eventually compact, see~\cite[p.~330]{Engel};
 note
 that  the spectrum might be empty in this case). Furthermore, under
 the stated assumptions we have  (see~\cite[p.~330]{Engel}),
  for any $r \in \R$, the set
 \begin{align*} \label{comp} \{ \mu \in \sigma(A)\, : \,
\mbox {\rm Re} (\mu) \ge r \} \;\; \mbox { is finite or empty.}
\end{align*}

As we noted, when the semigroup is eventually compact,
 the spectrum of $ A $ might be empty.   In this case we can choose
  $ H_1=0 $  and $ H_s=H $ since the semigroup is
exponentially stable on $ H $, see~\cite[p.~250-252]{Engel}. So, if
 null controllability holds we have also NCVE.

 Let the spectrum be not empty.
 If   $ {s(A)>0}$ then there exists
 an eigenvalue $\lambda$ with positive real part: one can easily
 show (see for example~\cite[Sect.~2.1]{PZ}) that the
 subspace $E$ of all generalized eigenvectors associated to $\lambda$
  is reducing for $e^{At}$ and that $e^{-At}$ generates a group on
  $E$ which is exponentially stable. We are
  in the case of Theorem~\ref{primo} and NCVE does not hold.

Let now $ s(A)\leq 0 $ and take
 $ c< s(A) $.  Let $ H_1 $ be the invariant subspace of $ H $
spanned by all the generalized eigenvectors associated to the (finitely many)
eigenvalues   with real part larger then  $ c $. Let $ P_{H_1} $ be the corresponding spectral projection     and set  $
H_s=\left (I-  P_{H_1}\right ) H  $. Then, from~\cite[p.~267]{CZ},
the semigroup is even
 exponentially stable on $ H_s $
and the conditions of Theorem~\ref{secondo} are satisfied, hence
NCVE holds.
\end{proof}

We conclude this introduction with the following
 observation which extends
 a property of null controllable systems
 proved by many people for
 distributed
 controls (see~\cite{fuhrman,Rolewicz,vanNeerve})
and likely known at least for some boundary control systems, in spite of the fact
that we cannot give a precise reference:
\begin{Lemma}\ZLA{Lemma:uniFOcontrolTIME} Assume Hypothesis~\ref{Hyp:0} and suppose that  every $ y\in H $ can be steered to
rest in a time $ T_y $. Then:
\begin{itemize}
\item there exists a time $ T_0 $ such that system~(\ref{base}) can be steered to the rest in time $ T_0 $;
\item there is a ball $ B(0,r) $ (centered at $ 0 $, radius $ r>0 $) and a number $ N $ such that every element of $ B(0,r) $ can be steered to the rest using a control whose $ L^2 $-norm is less then $ N $.
\end{itemize}

\end{Lemma}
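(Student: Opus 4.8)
The plan is to derive both assertions from Baire's category theorem, applied to the complete metric space $H$. For $n\in\mathbb{N}$ I introduce
\[
K_n=\{\,y_0\in H:\ \exists\, u\in L^2(0,n;U),\ \|u\|_{L^2(0,n;U)}\le n,\ \text{supp}\,u\subseteq[0,n],\ u\text{ steers }y_0\text{ to the rest in time }n\,\}.
\]
Writing $L_n u:=\int_0^n e^{\A(n-s)}Bu(s)\ZD s\in\DASP$ and recalling the identity displayed just after Definition~\ref{defi:CONTROREST}, membership $y_0\in K_n$ is equivalent to the single equation $L_nu=-e^{An}y_0$ for some admissible $u$ with $\|u\|_{L^2(0,n;U)}\le n$; note that, since the right-hand side lies in $H$, this equality in $\DASP$ automatically forces $L_nu\in H$. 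By hypothesis every $y\in H$ is steered to the rest in some time $T_y$ by some control $v$; extending $v$ by zero and choosing $n\ge\max\{T_y,\|v\|_{L^2}\}$ shows $y\in K_n$, whence $H=\bigcup_n K_n$.

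The step I expect to be the main obstacle is the closedness of each $K_n$; once this is established, Baire's theorem produces an index $n_0$ for which $K_{n_0}$ has nonempty interior. To prove closedness I take $y_0^{(k)}\in K_n$ with $y_0^{(k)}\to y_0$ in $H$ and associated controls $u_k$, $\|u_k\|_{L^2(0,n;U)}\le n$. As bounded sequences in the Hilbert space $L^2(0,n;U)$ are weakly sequentially precompact, a subsequence satisfies $u_{k_j}\rightharpoonup u$ with $\|u\|_{L^2(0,n;U)}\le n$. The delicate point is that $L_n$ is a priori only $\DASP$-valued, so one cannot pass to the limit in $H$; however, by Hypothesis~\ref{Hyp:0}, more precisely by the continuity of~(\ref{eq:TrasfCONTROevolNELduale}) into $C([0,n];\DASP)$, the operator $L_n$ is bounded and linear from $L^2(0,n;U)$ into $\DASP$, hence weak-to-weak continuous, so $L_nu_{k_j}\rightharpoonup L_nu$ in $\DASP$. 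Since $-e^{An}y_0^{(k_j)}\to-e^{An}y_0$ strongly in $H$, and therefore in $\DASP$, passing to the limit in $L_nu_{k_j}=-e^{An}y_0^{(k_j)}$ gives $L_nu=-e^{An}y_0$ in $\DASP$; because the right-hand side lies in $H$, this is precisely the steering condition, so $y_0\in K_n$.

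Finally I convert the interior point into the two stated conclusions by linearity. Let $B(\bar y,\rho)\subseteq K_{n_0}$. For $|z|<\rho$ both $\bar y$ and $\bar y+z$ lie in $K_{n_0}$; if $u_0,u_1$ are the corresponding controls, then subtracting the two identities $L_{n_0}u_1=-e^{An_0}(\bar y+z)$ and $L_{n_0}u_0=-e^{An_0}\bar y$ shows that $u_1-u_0$ steers $z$ to the rest in time $n_0$ with norm at most $2n_0$. This yields the second bullet with $r=\rho$ and $N=2n_0$. For the first bullet, given any $y\in H\setminus\{0\}$ I fix $\rho'<\rho$ and observe that $w:=\rho' y/|y|\in B(0,\rho)$ is steered to the rest in time $n_0$ by some control $u_w$; by the linearity (homogeneity) of~(\ref{eq:SoluBASE2}), the scaled control $(|y|/\rho')\,u_w$ steers $y$ to the rest in the same time $n_0$. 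Hence $T_0:=n_0$ works for every initial state, which is the first bullet.
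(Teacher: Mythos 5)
Your proof is correct and follows essentially the same route as the paper's: cover $H$ by the sets $K_n$ (the paper's $E_{n,n}$) of states steerable to rest in time $n$ with control norm at most $n$, apply Baire's theorem, and transfer the interior point to the origin. The only differences are cosmetic: you supply the closedness of $K_n$ in detail (weak compactness of the control ball plus weak-to-weak continuity of $L_n$ into $\DASP$), which the paper merely asserts by analogy with the distributed case, and you move the interior point to $0$ by subtracting two controls (getting $N=2n_0$) where the paper invokes convexity and balancedness of $E_{N_0,N_0}$ --- both minor variants of the same scheme.
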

\begin{proof} The proof is the same as for distributed systems: we introduce the sets $ E_{T,N} $ of those elements
$ y\in H $ which can be steered to the rest in time (at most) $ T $ and using controls of norm at most $ N $. These sets are closed, convex and balanced. Furthermore, they grow both with $ T $ and with $ N $.

Every $ y $ belongs to a suitable $ E_{T,N}  $ so that
\[
H=\cup E _{N,N}\,.
 \]
 Baire Theorem implies the existence of $ N_0 $ such that $ E_{N_0,N_0}  $ has interior points.

 The set $ E_{N_0,N_0}  $ being convex and balanced, $ 0 $
  is an interior point,
 i.e., any point of a ball centered at zero can be steered to the rest in time $ T=N_0 $ and the $ L^2 $-norm of the corresponding control is less then $ N _0$. This is the second statement and it implies that every $ y\in H $ can be steered to the rest in time $ T=N_0 $.
 \end{proof}

 In conclusion, we see that
null controllability   and null controllability at a fixed time $
T>0 $ are equivalent concepts.

\section{Proof of the main results}

 First we state two
lemmas which have an independent interest.

  Let $  y(t) $
  solve equation~(\ref{base}). Then,
  $  x(t)=(  \omega I- \A)^{-1}y(t) $ solves the equation
  \begin{equation}
\ZLA{eq:baseREGOL} \dot x=Ax + Du\,,\qquad
x(0)=x_0=( \omega I-A)^{-1}y_0\in {\rm dom }\,A\,.
\end{equation}
Consequently, every control which steers $ y_0 $ to zero, steers
also $ x_0 $ to zero, and conversely. Therefore,  we have:
\begin{Lemma}\ZLA{Lemma:base}
Assume Hypothesis~\ref{Hyp:0}. There exists $ T>0 $ such that
system~(\ref{base}) is null controllable in time $ T $ if and only
if system~(\ref{eq:baseREGOL}) is null controllable on $ {\mathcal
D}={\rm dom}\,A $ in the same time $ T $;
 system \eqref{base}  is NCVE if and only if
system~(\ref{eq:baseREGOL}) is NCVE on $ {\mathcal  D}={\rm dom}\,A $.
\end{Lemma}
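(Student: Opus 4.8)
The plan is to exploit the fact that the resolvent $(\omega I-\A)^{-1}$ furnishes a topological isomorphism between the two problems which leaves the control untouched; once this is set up, both equivalences reduce to bookkeeping. First I would record the structural facts. Since $\omega\in\rho(A)=\rho(\A)$ and the domain of $\A$ equals $H$, the operator $(\omega I-\A)^{-1}$ is bounded and boundedly invertible from $\DASP$ onto $H$; its restriction to $H$ coincides with $(\omega I-A)^{-1}$ and is a topological isomorphism of $H$ onto $\mathcal D=\mathrm{dom}\,A$ (with the graph norm). In particular the assignment $y_0\mapsto x_0=(\omega I-A)^{-1}y_0$ is a bijection of $H$ onto $\mathrm{dom}\,A$, so that the quantifier ``for every $y_0\in H$'' is exactly matched by ``for every $x_0\in\mathrm{dom}\,A$''.

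Second, I would verify the intertwining, with the \emph{same} control $u$ on both sides. Applying $(\omega I-\A)^{-1}$ to the mild solution formula~\eqref{eq:SoluBASE2}, and using that $(\omega I-\A)^{-1}$ commutes with $e^{\A t}$, that $e^{\A t}$ restricted to $H$ is $e^{At}$, and that $(\omega I-\A)^{-1}B=D$, one obtains
\[
(\omega I-\A)^{-1}y^{y_0,u}(t)=e^{At}x_0+\int_0^t e^{A(t-s)}Du(s)\ZD s,
\]
which is precisely the mild solution $x^{x_0,u}$ of~\eqref{eq:baseREGOL}. Hence $x(t)=(\omega I-\A)^{-1}y(t)$ holds pointwise in $t$ for the corresponding trajectories, as already anticipated before the statement.

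Finally, I would transfer the two properties using injectivity. Because $(\omega I-\A)^{-1}$ is injective, $y^{y_0,u}(t)=0$ if and only if $x^{x_0,u}(t)=0$, so the two trajectories have the same support; consequently $u$ steers $y_0$ to the rest in time (at most) $T$ if and only if the same $u$ steers $x_0$ to the rest in time (at most) $T$. Letting $y_0$ range over $H$, equivalently $x_0$ over $\mathrm{dom}\,A$, yields for each fixed $T$ the equivalence of null controllability of~\eqref{base} in time $T$ with null controllability of~\eqref{eq:baseREGOL} on $\mathcal D=\mathrm{dom}\,A$ in the same time $T$, which is the first assertion. For the second, observe that the control is unchanged, so both its $L^2(0,+\infty;U)$-norm and its control time $T_\ZEP$ are preserved; combined with the bijection $y_0\leftrightarrow x_0$, the defining conditions of Definition~\ref{defi:NCVE} for~\eqref{base} coincide verbatim with those for~\eqref{eq:baseREGOL} to be NCVE on $\mathrm{dom}\,A$. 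I expect the only delicate points to be the two facts used in the intertwining --- that $e^{\A t}$ agrees with $e^{At}$ on $H$ and commutes with the resolvent --- together with the correct reading of ``steered to the rest'' via supports in the $L^2_{\rm loc}$ sense, where injectivity of $(\omega I-\A)^{-1}$ is exactly what makes the supports of $y$ and of $x$ coincide.
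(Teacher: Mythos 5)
Your proposal is correct and follows essentially the same route as the paper, which proves the lemma in one line by observing that $x(t)=(\omega I-\A)^{-1}y(t)$ solves \eqref{eq:baseREGOL} whenever $y$ solves \eqref{base}, so the very same control steers $y_0$ to zero if and only if it steers $x_0=(\omega I-A)^{-1}y_0$ to zero, with $y_0\mapsto x_0$ a bijection of $H$ onto ${\rm dom}\,A$. Your write-up merely makes explicit the details the paper leaves implicit (the commutation of the resolvent with $e^{\A t}$, injectivity forcing equality of supports, and preservation of the $L^2$-norm and of the control time for the NCVE equivalence), all of which are accurate.
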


>From now on, $ \mathcal D $ will always denote $ {\rm dom}\, A $, i.e.,
\[
 {\rm \mathcal  D}={\rm dom}\,A \,.
 \]
 The second preliminary result is the following lemma:

 \begin{Lemma}\ZLA{Lemma:stimaCONTR(OLLO}
Assume Hypothesis~\ref{Hyp:0} and suppose that  system~(\ref{base})
is null controllable   in time $ T $. Then there exists a number
$ M>0
$ such that for every $ y_0\in H $ there exists a control $
u^{y_0,T}(t) $ which steers $ y_0 $ to $ 0 $ in time $ T $ and such
that:
\[
\int_0^{T} |  u^{y_0,T}(t)|^2\ZD t\leq M |y_0|^2\,.
 \]
\end{Lemma}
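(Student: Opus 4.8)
The statement asserts that if the system is null controllable in a fixed time $T$, then there is a single constant $M$ so that every $y_0$ can be driven to rest in time $T$ using a control whose $L^2$-norm is controlled by $M|y_0|$. The natural route is a closed-graph / open-mapping argument for the "control-to-state-at-time-$T$" map. The plan is to consider the input-to-state operator $\mathcal L_T : L^2(0,T;U)\to H$ sending $u$ to the value at time $T$ of the solution starting from $0$; more precisely I would work with the map $u\mapsto \int_0^T e^{\A(T-s)}Bu(s)\ZD s$. Null controllability in time $T$ says exactly that the point $-e^{AT}y_0$ lies in the range of this map for every $y_0\in H$, i.e. that $e^{AT}H$ is contained in the range of $\mathcal L_T$.

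The first step is to make $\mathcal L_T$ a well-defined bounded operator into $H$. For this I would exploit Hypothesis~\ref{Hyp:0}, which guarantees that $u\mapsto Lu$ is continuous from $L^2(0,T;U)$ into $L^2(0,T;H)$, together with the integration-by-parts observation made right after Definition~\ref{defi:CONTROREST}: once $u$ steers $y_0$ to rest, the integral is represented by a continuous $H$-valued function for $t>T$, so evaluation at $T$ makes sense in $H$. Restricting attention to the closed subspace $N=(\ker \mathcal L_T)^{\perp}$ and quotienting, $\mathcal L_T$ becomes an injective bounded operator whose range contains $e^{AT}H$. The second, and main, step is a Baire-category / closed-range argument. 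Here I would essentially reproduce the mechanism already displayed in the proof of Lemma~\ref{Lemma:uniFOcontrolTIME}: define, for $N>0$, the sets
\[
F_N=\Bigl\{\,y_0\in H \;:\; \exists\, u\in L^2(0,T;U),\ \|u\|_{L^2}\le N,\ u\text{ steers }y_0\text{ to rest in time }T\,\Bigr\}.
\]
Each $F_N$ is convex, balanced and closed (closedness uses the continuity of $\mathcal L_T$ into $H$ from Step~1: if $u_k\to u$ in $L^2$ and the associated endpoints converge, the limit endpoint is again $-e^{AT}y_0$). By hypothesis $H=\bigcup_N F_N$, so Baire's theorem yields some $F_{N_0}$ with nonempty interior; convexity and balancedness then force $0$ to be interior, giving a ball $B(0,r)\subseteq F_{N_0}$.

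The final step converts the ball estimate into the homogeneous bound. For $y_0\ne 0$, the scaled state $\frac{r}{2|y_0|}\,y_0$ lies in $B(0,r)\subseteq F_{N_0}$, hence is steered to rest by some control of $L^2$-norm at most $N_0$; by linearity of the system, multiplying that control by $\frac{2|y_0|}{r}$ steers $y_0$ to rest in time $T$ with $L^2$-norm at most $\frac{2N_0}{r}\,|y_0|$. Taking $M=2N_0/r$ gives the claim, with $u^{y_0,T}$ the resulting control. I expect the only genuine obstacle to be the \emph{closedness} of the sets $F_N$ in Step~2: one must verify that a limit of admissible controls (bounded in $L^2$, steering $y_k\to y_0$ to rest) produces an admissible control for $y_0$, which requires passing to a weakly convergent subsequence and using the boundedness of $\mathcal L_T$ from $L^2(0,T;U)$ into $H$ established via Hypothesis~\ref{Hyp:0}, rather than mere continuity into $L^2(0,T;H)$; everything else is the standard open-mapping-by-Baire machinery already invoked in Lemma~\ref{Lemma:uniFOcontrolTIME}.
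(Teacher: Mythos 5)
Your argument is correct, but it takes a genuinely different route from the paper's. The paper first passes to the regularized system $\dot x=Ax+Du$ with $D=(\omega I-\A)^{-1}B\in{\mathcal L}(U,H)$ (Lemma~\ref{Lemma:base}), so that the input map $\Lambda_T u=\int_0^T e^{\A(T-s)}Du(s)\ZD s$ is bounded into $H$; null controllability in time $T$ then reads ${\rm im}\, e^{AT}(\omega I-A)^{-1}\subseteq {\rm im}\,\Lambda_T$, and the proof constructs the \emph{minimal-norm} control explicitly through the pseudo-inverse of $Q_T=\Lambda_T\Lambda_T^*$, the bound $\|u^{y_0,T}\|_{L^2}\leq M|y_0|$ coming from the closed graph theorem applied to the everywhere-defined closed operator $Q_T^\dagger e^{AT}(\omega I-A)^{-1}$, see~(\ref{eq:FormACONTRnormaminima}). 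You instead rerun the Baire-category mechanism of Lemma~\ref{Lemma:uniFOcontrolTIME} at the \emph{fixed} time $T$ (which simplifies that lemma's setup, since here the union is over $N$ alone) and then exploit homogeneity: $y_0\in F_N$ iff $\lambda y_0\in F_{|\lambda|N}$, so the ball $B(0,r)\subseteq F_{N_0}$ rescales to the uniform bound. Your approach is more elementary — no adjoints, no pseudo-inverse — but it produces only a positively homogeneous selection $y_0\mapsto u^{y_0,T}$, whereas the paper's construction yields a bounded \emph{linear} map and the control of minimal $L^2$ norm; neither extra feature is used later, so both proofs serve the lemma equally well. (A cosmetic point: with the bound $\|u\|_{L^2}\leq (2N_0/r)|y_0|$ the constant in the squared estimate of the lemma is $M=(2N_0/r)^2$, not $2N_0/r$.)

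One point in your Step 1 is imprecise, though repairable within your own scheme. Hypothesis~\ref{Hyp:0} makes $u\mapsto Lu$ bounded into $L^2(0,T;H)$, and the integration-by-parts remark after Definition~\ref{defi:CONTROREST} provides a continuous representative only for those $u$ that already steer some $y_0$ to rest; together these do \emph{not} make $\mathcal L_T u=\int_0^T e^{\A(T-s)}Bu(s)\ZD s$ a bounded operator into $H$ on all of $L^2(0,T;U)$ — in general it is bounded only into $\DASP$. For the closedness of $F_N$ this weaker boundedness suffices: if $u_k\rightharpoonup u$ weakly in $L^2(0,T;U)$ with $\|u_k\|\leq N$, then $\mathcal L_T$, being bounded into $\DASP$, is weak-weak continuous, so $\mathcal L_T u_k\rightharpoonup \mathcal L_T u$ in $\DASP$, while $\mathcal L_T u_k=-e^{AT}y_k\to -e^{AT}y_0$ strongly in $H\subset\DASP$; hence $\mathcal L_T u=-e^{AT}y_0$ in $\DASP$, which is exactly the rest condition, since for $t>T$ the solution equals $e^{\A(t-T)}\left(e^{AT}y_0+\mathcal L_T u\right)=0$. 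Alternatively, apply $(\omega I-\A)^{-1}$ throughout and work with the $H$-valued operator $\Lambda_T$, exactly as the paper does via Lemma~\ref{Lemma:base}; either repair closes the only genuine issue in your plan, which you had correctly identified as the crux.
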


\begin{proof}
We already noted that we can equivalently control system~(\ref{eq:baseREGOL}) to zero on
$\mathcal D$,  i.e., we can solve
\begin{equation}\ZLA{eq:ControNULLtempoT}
-e^{AT}(\omega I -A)^{-1}y_0=\int_0^T e^{A(T-s)} Du(s)\ZD s
\end{equation}
and by assumption this equation is solvable for every $ y_0\in H $.
We introduce the operator $ \Lambda_T :  L^2 (0,T; U) \to H$  as
\begin{equation}
\label{eq:FirstDefinitionofLAMBDA} \Lambda_T u=\int_0^T e^{\A(T-s)}
Du(s)\ZD s\,.
\end{equation}
So, null controllability at time $ T $ is equivalent to
 \[
 {\rm im}\, e^{AT} (\omega I -A)^{-1}\subseteq {\rm im} \Lambda_T.
  \]
  The operator $ \Lambda_T $ is continuous,
  \[
  \Lambda_T \in {\mathcal L}\left (
  L^2(0,T;U),H
  \right ).
   \]
   Let us
introduce the continuous operator $ Q_T=\Lambda_T\Lambda_T^*\,. $
Its kernel is closed and its restriction to the orthogonal of the
kernel is invertible with closed inverse. Let us denote   $
Q_T^\dagger $ this inverse, so that the  control   which steers  $ ( \omega
I-A)^{-1}y_0 $  to zero in time $ T $  and which has minimal $
L^2(0,T) $ norm
  is
\begin{equation}
\ZLA{eq:FormACONTRnormaminima}
   u^{ y_0,T}(t)=-\Lambda_T^*Q_T^\dagger e^{AT}( \omega I-A)^{-1}y_0
 =-D^* e^{A^*(T-t)}Q_T^\dagger e^{AT}( \omega I-A)^{-1}y_0\,.
\end{equation}
  The closed operator $ Q_T^{\dagger} e^{AT}( \omega I-A)^{-1} $ being everywhere defined, it is continuous, so that
  \[
  \|  u^{y_0, T}\|_{L^2(0,T;U)}\leq M |y_0|\,,\qquad M=M_T\,,
   \]
   as wanted.
   \end{proof}

\begin{Remark}\label{Rema:SuiDIVERSItempiniz}{
  We note:
\begin{itemize}
\item
The function  $   u ^{y_0, T} (t)$, extended with $ 0 $ for $ t>T
$, produces a solution $ y(t) $ to Eq.~(\ref{base}), which has
support in $ [0,T] $.

\item  We can work with any initial
time $ \tau $ instead of   the initial time $ 0 $. If the system is
null controllable in time at most $ T $, then any ``initial
condition'' assigned at time $ \tau $ can be steered to rest on a
time interval still of duration $ T $, i.e., at the time $ T+\tau $
and the previous Lemma~\ref{Lemma:stimaCONTR(OLLO} still holds, with
the constant $ M $ depending solely on the length of the
controllability time, i.e., the same constant $ M_T $ can be used
for every initial time $ \tau $.

\end{itemize}
} \end{Remark}

\subsection{Proof of Theorem~\ref{primo},  {\it i.e., if NCVE holds then
 the  subspace $ E $ does not exist.\/}}

The proof in~\cite{PZ} relays on a precise study of the quadratic
regulator problem and the associated Riccati equation. Here we
follow a different route: we prove that the existence of the
subspace $ E $
 implies that system~(\ref{base}) is not NCVE.

 Let $ E^C $ be the complementary subspace of $ E $ which is
invariant for the semigroup.
 Let $ y_0\neq 0 $ be any  point of $ E $.
If it cannot be steered to $ 0 $ then system~(\ref{base}) is not
null controllable, hence even not NCVE. So, suppose that there
exists a control $ u $ which steers $ y_0 $ to zero in time $ T $.
Then we have, for every $ t>T $,
\[
e^{At}y_0=-\int_0^t e^{\A(t-s)}Bu(s)\ZD s,
 \]
i.e.,
 \[
e^{AT}( \omega I-A)^{-1} y_0=-\int_0^T e^{A(T-s)}Du(s)\ZD s\,.
\]
Let now $ P_E $ be the projection of $ H $ onto $ E $ along  $ E^C
$.   We have
\begin{align} \label{referee}
e^{AT}( \omega I-A)^{-1} y_0=-\int_0^T e^{A(T-s)}P_E Du(s)\ZD s
-\int_0^T e^{A(T-s)}(I-P_E )Du(s)\ZD s\,.
 \end{align}
 The left hand side belongs to $ E $ so that the last integral is
zero since  $ (I-P_E) $ commutes with the semigroup
  due to the fact that $ E $ is a reducing subspace.
  Then,
\[
e^{AT}( \omega I-A)^{-1} y_0=-\int_0^T e^{A(T-s)}P_E Du(s)\ZD s\,.
 \]
Hence we have also
\[
 ( \omega I-A)^{-1} y_0=-\int_0^T e^{-A  s }P_EDu(s)\ZD s
 \]
 {\em since $ A $ generates a group on $ E $.\/}   Note
that this equality in particular implies that $ P_ED\neq 0 $ since
the left hand side is not zero.

 We assumed that $ e^{-At} $ is exponentially stable on $ E $, i.e.,
  we assumed the existence of $ M>1 $ and $ \gamma>0 $ such that

 \[
 \left | e^{-A s}y \right |\leq M e^{-\gamma s} | y | \qquad
     \mbox{for all $ s>0$ and for all  $ y \in E $}\,.
  \]
  So, using Schwarz inequality we see that:
 \[
 \left | ( \omega I-A)^{-1} y_0\right |\leq
\frac{M\| P_ED\|_{{\mathcal L}(U, H)}}{\sqrt{2\gamma}}\| u\|
_{L^2(0,T; U)}\,.
  \]
  This is an
estimate \emph{from below} for the $ L^2(0,T) $-norm of any control
which steers $ y_0 $ to the rest, and this estimate does not depend
on $ T $. Hence, the system is not NCVE, as we wished to prove.\qed

\subsection{Proof of Theorem~\ref{secondo}, {\it i.e.,
 null controllability  and $ s(A)\leq 0 $ implies  NCVE\/} }

We introduce a new notation.  Since we need to consider solutions of
equation~(\ref{base}) with initial time $ \tau $, possibly different
from $ 0 $, we introduce
\[
y(t;\tau,y_0,u)
 \]
 to denote the solution of the problem
 \[
 y'={\A}y+Bu \qquad t>\tau\,,\qquad y(\tau)=y_0\,.
  \]
  Furthermore,
  when $ \tau=0 $, we shall write $ y(t;y_0,u) $
  instead of $ y(t;0,y_0,u) $.
  Comparing with \eqref{eq:SoluBASE2}, we have
  \[
  y(t;y_0,u)=y^{y_0,u}(t)\,.
   \]
We first   give  a different formulation
 of the problem under study. To this purpose
 we introduce the following functionals
 $ I(y_0) $ and $ Z(y_0) $:

\begin{eqnarray}
\ZLA{eq:FormaQUADR}   I(y_0)=\inf_{u\in{\mathcal U}(y_0)} {J(y_0;u)}
\,,\qquad
J(y_0; u)=\int _{0}^{+\infty}|u(s)|^2\ZD s\\
\nonumber   {\mathcal U}(y_0) = \left \{ u\in L^2(0,+\infty; U)\,:\
 y(\cdot; y_0,u)\in L^2(0,+\infty;H)  \right
\}
\end{eqnarray}
and
\begin{equation}
 \label{eq:costoZ}
 Z(y_0)
 = \inf_{t > T}  \; \left ( \inf   \Big \{ \int_0^{t}
  | u(s) | ^2  ds\Big\}\right ),
\end{equation}
where, for each $ t>T $, the infimum in braces is computed on
those controls $ u  $ which steers $ y_0 $ to the rest in time at
most $ t $ (i.e., the supports of  $u $ and $ y^{y_0,u} $ have to be
contained in $ [0,t] )$ (recall Lemma~\ref{Lemma:uniFOcontrolTIME}).

Using
Lemma~\ref{Lemma:stimaCONTR(OLLO}, we can prove:

\begin{Theorem} \label{starAt} Assume Hypothesis~\ref{Hyp:0}.
 Let system~(\ref{base})
 be null controllable in time $ T $. Then we have
 \[
 I(y_0)=Z(y_0)\,.
  \]
\end{Theorem}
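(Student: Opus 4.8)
The plan is to prove the two inequalities $I(y_0)\le Z(y_0)$ and $Z(y_0)\le I(y_0)$ separately. The first is essentially immediate: any control $u$ which steers $y_0$ to the rest in some finite time $t>T$ has support in $[0,t]$, and the corresponding solution $y(\cdot;y_0,u)$ has support in $[0,t]$ as well; hence $u\in L^2(0,+\infty;U)$ and $y(\cdot;y_0,u)\in L^2(0,+\infty;H)$, so that $u\in{\mathcal U}(y_0)$ with $J(y_0;u)=\int_0^t|u(s)|^2\ZD s$. Taking the infimum over all such controls and all $t>T$ gives $I(y_0)\le Z(y_0)$. In particular, since null controllability guarantees that $y_0$ can actually be steered to the rest, the class ${\mathcal U}(y_0)$ is nonempty and both quantities are finite, so the reverse inequality is not vacuous.

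For the reverse inequality $Z(y_0)\le I(y_0)$ I would fix $\ep>0$ and pick a control $u\in{\mathcal U}(y_0)$ which is $\ep$-optimal for $I$, that is $J(y_0;u)=\int_0^{+\infty}|u(s)|^2\ZD s\le I(y_0)+\ep$. By the definition of ${\mathcal U}(y_0)$ the trajectory $y=y(\cdot;y_0,u)$ lies in $L^2(0,+\infty;H)$; in particular $y(t)\in H$ for a.e.\ $t$ and $\int_0^{+\infty}|y(t)|^2\ZD t<+\infty$. The key observation is that $L^2$-integrability forces the essential $\liminf$ of $|y(t)|$ as $t\to+\infty$ to vanish: averaging $|y|^2$ over the intervals $[N,N+1]$ and using that $\int_N^{+\infty}|y(t)|^2\ZD t\to0$, one finds arbitrarily large times $\tau$ at which $y(\tau)\in H$ and $|y(\tau)|^2$ is as small as we please.

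The construction then proceeds by splicing. Using such a time $\tau$, I would apply Lemma~\ref{Lemma:stimaCONTR(OLLO} together with Remark~\ref{Rema:SuiDIVERSItempiniz} (restart at the initial time $\tau$, with the same controllability constant $M=M_T$) to obtain a control $v$, supported in $[\tau,\tau+T]$, which steers the state $y(\tau)$ to the rest on $[\tau,\tau+T]$ and satisfies $\int_\tau^{\tau+T}|v(s)|^2\ZD s\le M|y(\tau)|^2$. I would then define a new control equal to $u$ on $[0,\tau]$, equal to $v$ on $[\tau,\tau+T]$, and zero afterwards. By causality of the evolution~(\ref{eq:SoluBASE2}) this control produces a trajectory that coincides with $y$ up to time $\tau$, is driven to zero at time $\tau+T$, and stays at rest thereafter; hence it steers $y_0$ to the rest in time $\tau+T>T$. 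Its energy is bounded by $\int_0^\tau|u(s)|^2\ZD s+M|y(\tau)|^2\le J(y_0;u)+M|y(\tau)|^2\le I(y_0)+\ep+M|y(\tau)|^2$. Choosing $\tau$ so that $M|y(\tau)|^2\le\ep$ yields $Z(y_0)\le I(y_0)+2\ep$, and letting $\ep\to0$ gives $Z(y_0)\le I(y_0)$.

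The step I expect to be the main obstacle is the passage from $y\in L^2(0,+\infty;H)$ to a genuine intermediate state: since the solution is a priori only an $L^2$-in-time, $\DASP$-valued object, one must argue that for a suitable $\tau$ the value $y(\tau)$ is a bona fide element of $H$ of small norm, and that restarting the control problem from this $y(\tau)$ is legitimate with an energy estimate uniform in $\tau$. Hypothesis~\ref{Hyp:0}, which places the solution in $L^2_{\rm loc}(0,+\infty;H)$, together with the time-translation remark behind Lemma~\ref{Lemma:stimaCONTR(OLLO}, are exactly what make this splicing rigorous.
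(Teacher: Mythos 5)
Your proof is correct and follows essentially the same route as the paper: splice an $\ep$-optimal control for $I(y_0)$ with a cheap steering control from a small intermediate state, invoking Lemma~\ref{Lemma:stimaCONTR(OLLO} and the uniform-in-$\tau$ constant of Remark~\ref{Rema:SuiDIVERSItempiniz}, and then let the two small parameters go to zero. The only divergence is a technical detail at the one delicate step you correctly identified: where you evaluate the $L^2(0,+\infty;H)$ trajectory at an almost-every time $\tau$ with $y(\tau)\in H$ of small norm (legitimate, since the continuous $\DASP$-valued representative lies in $H$ for a.e.\ $t$ and the evolution identity holds in $\DASP$ at every such $\tau$), the paper instead first approximates the near-optimal control by $C^1$ controls, so that by Lemma~\ref{Lemma:ContY} the trajectory is continuous $H$-valued and the small intermediate value $y(S_\sigma)$ is a genuine pointwise evaluation.
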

 \begin{proof}
  It is clear that
 $ I(y_0)\leq Z(y_0) $ for every $ y_0\in H $. We prove the converse
inequality.

 Let us fix any $ y_0\in H $.
Null controllability  implies that $ I(y_0)<+\infty $ for every
$ y_0 $ so that for every $ \ZEP>0 $ there exist a control $ u_{\ZEP} \in {\mathcal U}(y_0)
 $   such that for  every $ S>0 $  we have
 \begin{equation}\ZLA{DisePERIequalZ}
 \int_0^S  |u _{\ZEP}(t) |^2\ZD t< I(y_0)+\ZEP\,.
  \end{equation}
 The condition
$ u _{\ZEP}\in {\mathcal U}(y_0) $ implies  $ y^{y_0,u_\ZEP}\in
L^2(0,+\infty; H) $ and so for  every $ \ZSI>0 $ we have  $ \left |
y^{y_0,u_\ZEP}\right |^2 _{L^2(R,R+1;H)}<\ZSI$ for $ R $
sufficiently large.

   There exists a
sequence $\{ u_n\}$ in $C^1 ([0,R+1];U) $, which converges to $
u_\ZEP $ in $ L^2(0,R+1;U) $. So, there exists an index $ N$ such
that inequality (\ref{DisePERIequalZ})   holds for $ u_N $ and
furthermore $ \left| y^{y_0,u_N}\right|^2 _{L^2(R,R+1; H)}<\ZSI$.
Hence we can find $ S_\ZSI\in [R,R+1] $ such that
  the continuous function $ y^{y_0,u_N}  (t) $ satisfies
 \[
 | y^{y_0,u_N}  (S_\sigma )|^2=|y(S_\sigma,y_0,u_N)|^2<\ZSI\,.
  \]
Null controllability holds also
 on $ [S _{\ZSI}, S_\ZSI+T] $
and Lemma~\ref{Lemma:stimaCONTR(OLLO} can be applied on this
interval (see also  Remark~\ref{Rema:SuiDIVERSItempiniz}). Hence,
there exists a control $ \tilde u $ with support in   $ [S _{\ZSI},
S_\ZSI+T] $ which steers to the rest in time $ T $ the ``initial
condition'' $ y^{y_0,u_N}  (S_\sigma ) = y(S_\sigma,y_0,u_N)$,
assigned at the ``initial time'' $ S _{\ZSI} $.
Lemma~\ref{Lemma:stimaCONTR(OLLO} shows
 that the square norm of this control is less then $ M\ZSI $.

 Now we apply first
the control  $ u_N $, on $ [0,S_\ZSI] $, and after that the control
$ \tilde u $. In this way we  steer  $ y_0 $ to the rest in time at
most $ S_\ZSI+T $ and the square of the $ L^2 $ norm of the control
is less then $ I(y_0)+\ZEP+M \ZSI $. This shows that
 \[
 Z(y_0)\leq I(y_0)+\ZEP+M \ZSI\,.
  \]
  The required
inequality follows since $ \ZEP>0 $ and $ \ZSI>0 $ are arbitrary.
\end{proof}

This theorem shows:
\begin{Corollary}\label{Corollary:IyENCVE}
Assume Hypothesis~\ref{Hyp:0} and suppose that
  system~(\ref{base})
 is null controllable in time $ T $. Then System~(\ref{base})
is NCVE if and only if $ I(y_0)=0 $, for every $ y_0\in H $.
\end{Corollary}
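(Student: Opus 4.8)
The plan is to deduce this corollary directly from Theorem~\ref{starAt}, which already supplies the identity $I(y_0)=Z(y_0)$, by showing that the NCVE property of a single initial state $y_0$ is equivalent to the vanishing of $Z(y_0)$. Since the whole system is NCVE precisely when every $y_0\in H$ is NCVE, combining these two facts will yield the stated equivalence with $I(y_0)=0$ for all $y_0$.

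First I would establish that $y_0$ is NCVE if and only if $Z(y_0)=0$. For the forward direction, suppose $y_0$ is NCVE. Then for each $\epsilon>0$ there is a control $u_\epsilon$ steering $y_0$ to the rest in some finite time $T_\epsilon$ with $\int_0^{T_\epsilon}|u_\epsilon(s)|^2\,ds\leq\epsilon^2$. Enlarging $T_\epsilon$ if necessary (which changes neither the control nor its energy, as its support stays fixed), I may assume $T_\epsilon>T$; this control is then admissible in the inner infimum defining $Z(y_0)$ at the level $t=T_\epsilon$, so $Z(y_0)\leq\epsilon^2$. As $\epsilon>0$ is arbitrary, $Z(y_0)=0$. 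For the converse, if $Z(y_0)=0$ then for each $\epsilon>0$ there exist a time $t>T$ and a control steering $y_0$ to the rest in time at most $t$ with energy strictly below $\epsilon^2$, and this control witnesses the NCVE property of $y_0$.

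Applying Theorem~\ref{starAt}, whose hypotheses are exactly the standing assumptions of the corollary (Hypothesis~\ref{Hyp:0} together with null controllability in time $T$), I replace $Z(y_0)$ by $I(y_0)$ and conclude that $y_0$ is NCVE if and only if $I(y_0)=0$. Finally, by Definition~\ref{defi:NCVE} the system~(\ref{base}) is NCVE if and only if every $y_0\in H$ is NCVE, which by the previous step is equivalent to $I(y_0)=0$ for every $y_0\in H$.

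I do not expect a serious obstacle: the argument is essentially an unwinding of the definitions once Theorem~\ref{starAt} is available. The only point demanding a little care is the mismatch between the ``time at most $t$ with $t>T$'' used in the definition of $Z$ and the ``some finite time $T_\epsilon$'' appearing in the definition of NCVE; this is resolved by the elementary remark that a control steering $y_0$ to the rest in time at most $T_\epsilon$ also does so in any larger time, so one may always arrange $T_\epsilon>T$ without altering the energy.
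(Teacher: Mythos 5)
Your proposal is correct and follows exactly the route the paper intends: the paper states this corollary with no separate proof (``This theorem shows:''), treating it as an immediate consequence of Theorem~\ref{starAt} via precisely the unwinding you spell out, namely that $y_0$ is NCVE if and only if $Z(y_0)=0$, whence $I(y_0)=Z(y_0)$ gives the claim. Your extra care about enlarging $T_\ZEP$ past $T$ without changing the control's energy is the right (and only) point of friction between the two definitions, and you handle it correctly.
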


So, our goal now is the proof that, under the assumptions of
Theorem~\ref{secondo}, we have $ I(y_0)=0 $.

The study of the value function $ I(y_0) $ defined above is the
object of the so-called   theory of the \emph{quadratic regulator
problem with stability} or \emph{Kalman-Yakubovich-Popov Theory}. It
has been studied, for special classes of distributed control
systems, in~\cite{LW,PandFREQdomREG,PandLOI1,PandLOI2,PandLOI3}.
But, we need an improved version of the    results of this theory,
i.e., we need the following theorem:
\begin{Theorem}\ZLA{teo:KYP}
 Assume Hypothesis~\ref{Hyp:0}
and suppose that  system~(\ref{base}) is null controllable.   Then there exists $ P=P^*\in{\mathcal L}(H) $ such that for every $ y_0\in H $ we have
\begin{equation} \label{eq:DeFInixDiP}
I(y_0)=\langle y_0,Py_0\rangle\,.
\end{equation}
Furthermore, the operator $ P $ satisfies the following inequality, for any   $y_0 \in H$, $u \in L^2_{loc}(0, + \infty; U)$,
 \begin{equation}
\ZLA{eq:INteINEQINTEGRALform}
   \ZL  Py(t;y_0,u),y(t;y_0,u)\ZR-\ZL  Py_0,y_0\ZR+
   \int_0^t |u(s)|^2\ZD s\geq 0 \quad  \text{a.e.}\;\; t \ge 0\,.
    \end{equation}
\end{Theorem}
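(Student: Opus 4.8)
The plan is to prove the theorem in two stages. First I would show that the value function $I$ is a \emph{bounded nonnegative quadratic form} on $H$, which automatically produces a self-adjoint $P=P^*\in\mathcal L(H)$ with $I(y_0)=\langle y_0,Py_0\rangle$. Second, I would derive the integral inequality \eqref{eq:INteINEQINTEGRALform} from a dynamic-programming (concatenation) argument for the optimal control problem defining $I$.

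For the first stage I would read $I(y_0)$ as a minimal-distance problem in $L^2(0,+\infty;U)$. Set $V=\mathcal U(0)=\{v\in L^2(0,+\infty;U):Lv\in L^2(0,+\infty;H)\}$, a linear subspace. Null controllability makes $\mathcal U(y_0)$ nonempty, and by the linearity of \eqref{eq:SoluBASE2}, if $u_0\in\mathcal U(y_0)$ then $\mathcal U(y_0)=u_0+V$, so that
\[
I(y_0)=\inf_{u\in u_0+V}\|u\|^2_{L^2(0,+\infty;U)}=\mathrm{dist}\big(0,\,u_0+\overline V\big)^2=\big\|P_{V^\perp}u_0\big\|^2 ,
\]
where $P_{V^\perp}$ is the orthogonal projection onto the closed subspace $V^\perp=(\overline V)^\perp$. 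The element $\hat u(y_0):=P_{V^\perp}u_0$ is independent of the choice of $u_0\in\mathcal U(y_0)$ (two choices differ by an element of $V$, which $P_{V^\perp}$ annihilates), and $y_0\mapsto\hat u(y_0)$ is linear: if $u_0\in\mathcal U(y_0)$ and $w_0\in\mathcal U(z_0)$, then $\alpha u_0+\beta w_0\in\mathcal U(\alpha y_0+\beta z_0)$, so $\hat u(\alpha y_0+\beta z_0)=\alpha\hat u(y_0)+\beta\hat u(z_0)$. Combining Lemma~\ref{Lemma:uniFOcontrolTIME} (to fix a single controllability time $T$) with Lemma~\ref{Lemma:stimaCONTR(OLLO} (whose minimal-norm control, extended by $0$ past $T$, lies in $\mathcal U(y_0)$) gives $I(y_0)\le M|y_0|^2$, hence $\|\hat u(y_0)\|\le\sqrt M\,|y_0|$ and $\hat u\in\mathcal L\big(H,L^2(0,+\infty;U)\big)$. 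Setting $P:=\hat u^*\hat u\in\mathcal L(H)$ yields $P=P^*\ge0$ and $\langle y_0,Py_0\rangle=\|\hat u(y_0)\|^2=I(y_0)$, which is \eqref{eq:DeFInixDiP}.

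For the second stage I would use the Bellman principle. Fix $y_0\in H$, $u\in L^2_{loc}(0,+\infty;U)$, and a time $t$ at which $y(t):=y(t;y_0,u)$ is a genuine element of $H$; since $y(\cdot;y_0,u)\in L^2_{loc}(0,+\infty;H)$ this holds for a.e.\ $t$. Splitting \eqref{eq:SoluBASE2} at $t$ and factoring out $e^{\A(s-t)}$ (which restricts to the semigroup on $H$, as $y(t)\in H$) gives the flow identity $y(s;y_0,u)=y\big(s-t;\,y(t),\,u(\cdot+t)\big)$ for $s\ge t$. Given $\ZEP>0$, pick $v\in\mathcal U(y(t))$ with $\int_0^{+\infty}|v(r)|^2\ZD r\le I(y(t))+\ZEP$ and concatenate: $\tilde u:=u$ on $[0,t)$ and $\tilde u(s):=v(s-t)$ on $[t,+\infty)$. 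The resulting trajectory equals $y(\cdot;y_0,u)$ on $[0,t]$ (in $L^2(0,t;H)$ by Hypothesis~\ref{Hyp:0}) and $y(\cdot-t;y(t),v)$ on $[t,+\infty)$ (in $L^2(t,+\infty;H)$ since $v\in\mathcal U(y(t))$), so $\tilde u\in\mathcal U(y_0)$ and
\[
I(y_0)\le\int_0^{+\infty}|\tilde u(s)|^2\ZD s=\int_0^t|u(s)|^2\ZD s+\int_0^{+\infty}|v(r)|^2\ZD r\le\int_0^t|u(s)|^2\ZD s+I(y(t))+\ZEP .
\]
Letting $\ZEP\downarrow0$ and substituting \eqref{eq:DeFInixDiP} for $I(y_0)$ and $I(y(t))$ (using $\langle x,Px\rangle=\langle Px,x\rangle$ for $P=P^*$) gives exactly \eqref{eq:INteINEQINTEGRALform}.

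The first stage is essentially routine once $I$ is recognized as a squared distance. The step I expect to require the most care is the flow identity together with the well-posedness of the concatenation, precisely because pointwise evaluation of $y(\cdot;y_0,u)$ in $H$ is only meaningful almost everywhere. I would therefore restrict attention to the full-measure set of times $t$ at which $y(t)\in H$, verify the cocycle identity there directly from \eqref{eq:SoluBASE2}, and observe that on $[t,+\infty)$ the concatenated trajectory coincides with a genuine trajectory issued from the $H$-valued state $y(t)$, so that its square-integrability is \emph{exactly} the membership condition $v\in\mathcal U(y(t))$.
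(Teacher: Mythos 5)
Your proposal is correct, but it reaches the theorem by a genuinely different route than the paper, and the difference is worth recording. For the existence of $P$, the paper proves that $I$ satisfies the parallelogram identity by an $\ZEP$-argument on near-optimal controls (following Favini--Pandolfi), then constructs $P$ by polarization, obtaining at first only additivity and homogeneity over rationals, and must then prove a Schwarz-type inequality and pass to the limit to get genuine linearity and continuity; boundedness comes from the Baire-category Lemma~\ref{Lemma:uniFOcontrolTIME} exactly as in your argument. You instead exploit the special structure of the cost ($J(y_0;u)=\|u\|^2$ with no state weighting): since $\mathcal U(y_0)=u_0+\mathcal U(0)$ is an affine translate of the fixed subspace $V=\mathcal U(0)$, the value $I(y_0)$ is literally the squared distance from the origin to $u_0+\overline V$, the minimizing direction $\hat u(y_0)=P_{(\overline V)^\perp}u_0$ is well defined and linear in $y_0$, and $P:=\hat u^*\hat u$ is self-adjoint, nonnegative and bounded in one stroke --- bypassing polarization, rational homogeneity, and the continuity extension entirely. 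For the inequality \eqref{eq:INteINEQINTEGRALform}, both arguments are dynamic programming, but the paper first restricts to controls of class $C^1$ on $[0,\tau]$ so that the trajectory is continuous (Lemma~\ref{Lemma:ContY}) and pointwise evaluation is licit, proves separately that $P_\tau=P$, and then invokes a final approximation to reach $u\in L^2_{\rm loc}$; you work directly at the full-measure set of times $t$ where $y(t;y_0,u)\in H$, verify the cocycle identity in $\DASP$ there, and absorb the time-shift invariance ($P_\tau=P$) into the flow identity, which yields the ``a.e.\ $t$'' statement without any smoothing step. What the paper's route buys is robustness: the parallelogram--polarization machinery works for quadratic costs that are not squared distances to an affine set (e.g.\ with state-dependent integrands), which is the setting of the cited Kalman--Yakubovich--Popov literature; what your route buys is economy and transparency for the specific functional at hand, and a cleaner handling of the measurability issue. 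The only points demanding care --- that the minimal-norm control of Lemma~\ref{Lemma:stimaCONTR(OLLO}, extended by zero, lies in $\mathcal U(y_0)$, and that the concatenated control's trajectory is square-integrable on $[t,+\infty)$ precisely because $v\in\mathcal U(y(t))$ --- are both correctly identified and discharged in your write-up.
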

Inequality~(\ref{eq:INteINEQINTEGRALform})
 is
called {\em Linear Integral Inequality\/}---shortly (LOI)---or {\em
Dissipation inequality\/}---shortly (DI)---in integral form.

\medskip
  The proof of Theorem ~\ref{teo:KYP}
requires some preliminary
   lemmas.

 \smallskip 
  We first note that the functional $I$ defined in
 \eqref{eq:FormaQUADR} verifies   $I (\lambda y_0) = |\lambda|^2
I(y_0)$, $\lambda \in \C$, $y_0 \in H$. An
 obvious consequence is
\begin{equation}
\label{eq:ProBASEiX}
\left\{\begin{array}{lll}
I(x)=I(-x) &{\rm and} &I(0)=0\,,\\
|\alpha|=|\beta|&{\rm implies}& I(\alpha x)=I(\beta x), \;\;\;
 \alpha, \beta \in \C, \; x \in H.
\end{array}\right.
\end{equation}
  Then   we give a representation of $ I(y_0) $.
\begin{Lemma}\ZLA{teo:defiFOqquadr} Assume Hypothesis~\ref{Hyp:0} and suppose  that
  system~(\ref{base})
 is null controllable in time $ T $. Then
 there exists an operator $ P $
defined on $H$ such that
\begin{equation}\ZLA{eq:ladefiPPRIMAvers}
I(y_0)=\ZL   y_0,Py_0\ZR\,.
 \end{equation}
 The operator $ P $ has the following properties:
 \begin{itemize}
\item[\bf a)]  $\ZL Px,\xi\ZR=\ZL x,P\xi\ZR\qquad \forall x\,,\ \xi \in H$;
\item[\bf b)]  $P(x+\xi)=Px+P\xi\qquad \forall x\,,\ \xi \in H$;
\item[\bf c)]  the equality $P(qx)=qPx$ holds for all $ x\in H $ and every
complex  number $ q $ with \emph{rational} real and imaginary parts.
\item[\bf d)] $ \ZL y_0,P y_0\ZR\geq 0 $ for all $ y_0 \in H$.
\end{itemize}
\end{Lemma}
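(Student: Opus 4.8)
The plan is to show that $I$ is a nonnegative, bounded quadratic form obeying the parallelogram law, and then to manufacture $P$ by (complex) polarization together with the Riesz representation theorem. Three ingredients drive the argument. The first two are essentially already available: $I(y_0)\ge 0$ and $I(\lambda y_0)=|\lambda|^2 I(y_0)$ for $\lambda\in\C$ (recorded in \eqref{eq:ProBASEiX} and the lines preceding it), and the \emph{boundedness} of $I$. For the latter, the control $u^{y_0,T}$ produced by Lemma~\ref{Lemma:stimaCONTR(OLLO} steers $y_0$ to rest in time $T$, so its compactly supported trajectory lies in $L^2(0,+\infty;H)$; hence $u^{y_0,T}\in{\mathcal U}(y_0)$ and
\[
0\le I(y_0)\le \int_0^T|u^{y_0,T}(t)|^2\ZD t\le M|y_0|^2 .
\]
The third ingredient, and the crux of the proof, is the parallelogram law for $I$.

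The parallelogram law is the main obstacle, and I would derive it from the linearity of the input-to-state map \eqref{eq:SoluBASE2}. Since $y^{y_0,u}$ depends linearly on the pair $(y_0,u)$, whenever $u\in{\mathcal U}(x)$ and $v\in{\mathcal U}(\xi)$ we have $u\pm v\in{\mathcal U}(x\pm\xi)$; combining this admissibility with the parallelogram identity in $L^2(0,+\infty;U)$ and passing to the infimum over near-optimal $u,v$ yields
\[
I(x+\xi)+I(x-\xi)\le 2I(x)+2I(\xi),\qquad x,\xi\in H .
\]
The reverse inequality needs no further control-theoretic input: putting $x=\tfrac12(a+b)$, $\xi=\tfrac12(a-b)$ in the displayed inequality and using the homogeneity $I(\tfrac12 w)=\tfrac14 I(w)$ gives $2I(a)+2I(b)\le I(a+b)+I(a-b)$. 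Hence equality holds for all vectors, which is exactly the parallelogram law.

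With the parallelogram law and the bound $0\le I\le M|\cdot|^2$ in hand, I would introduce the sesquilinear form
\[
b(x,\xi)=\frac14\sum_{k=0}^{3} i^{k}\,I\!\left(x+i^{k}\xi\right),\qquad x,\xi\in H
\]
(adjusting the conjugation to the inner-product convention of $H$). A direct computation using $I(\lambda\,\cdot)=|\lambda|^2 I(\cdot)$ gives $b(x,x)=I(x)$; the parallelogram law is precisely what makes $b$ additive in each slot (the Jordan--von Neumann argument), and additivity together with homogeneity promotes this to $\mathbb{Q}+i\mathbb{Q}$-homogeneity. The form is Hermitian and, by $I\le M|\cdot|^2$, bounded, $|b(x,\xi)|\le C|x|\,|\xi|$. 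For fixed $\xi$ the functional $x\mapsto b(x,\xi)$ is then bounded and (conjugate-)linear, so the Riesz theorem furnishes a unique $P\xi\in H$ with $\langle x,P\xi\rangle=b(x,\xi)$, establishing \eqref{eq:ladefiPPRIMAvers}.

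Reading the properties of $b$ back through the identity $\langle x,P\xi\rangle=b(x,\xi)$ then yields exactly a)--d): Hermitian symmetry of $b$ gives a), additivity gives b), the rational homogeneity gives c), and $\langle\xi,P\xi\rangle=b(\xi,\xi)=I(\xi)\ge 0$ gives d). I expect the routine parts to be the bookkeeping in the polarization computation (checking $b(x,x)=I(x)$ and the homogeneity/symmetry relations under the chosen inner-product convention), while the only genuinely substantive point is the parallelogram law above. I note finally that boundedness already forces $P$ to be $\C$-linear and self-adjoint, but only the weaker properties a)--d) are asserted here; the sharper statement $P=P^*\in{\mathcal L}(H)$ is recorded separately in Theorem~\ref{teo:KYP}.
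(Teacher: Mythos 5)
Your proposal is correct, and its core --- the parallelogram law for $I$ via near-optimal controls and linearity of the solution map, followed by complex polarization --- is the same as the paper's; your form $b(x,\xi)=\frac14\sum_{k=0}^{3} i^{k}I(x+i^{k}\xi)$ coincides with the paper's \eqref{eq:DefiPs} after using $I\bigl(\tfrac12 w\bigr)=\tfrac14 I(w)$. You deviate in two ways, both defensible and in places cleaner. First, for the reverse parallelogram inequality the paper runs a second control-theoretic $\epsilon$-argument (building $u_0=(\tilde u+\tilde v)/2$, $v_0=(\tilde u-\tilde v)/2$ from near-optimal controls for $x\pm\xi$ and deriving \eqref{eq:ParaIDEforI}), whereas you observe that substituting $x=\tfrac12(a+b)$, $\xi=\tfrac12(a-b)$ into the already-proved inequality \eqref{paral} and using homogeneity yields the converse purely formally --- a genuine simplification requiring no further control input. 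Second, you import the bound $I(y_0)\le M|y_0|^2$ at the outset, legitimately: Lemma~\ref{Lemma:stimaCONTR(OLLO} is proved before this point, and your observation that the steering control (extended by zero) lies in ${\mathcal U}(y_0)$ because its trajectory has support in $[0,T]$ and is in $L^2$ there by Hypothesis~\ref{Hyp:0} is even more direct than the paper's route through the Baire argument (Lemma~\ref{Lemma:uniFOcontrolTIME} via Lemma~\ref{teo:USObaire}). This lets you invoke Riesz representation and obtain $P\in{\mathcal L}(H)$ immediately, whereas the paper constructs $P$ algebraically and defers boundedness, the Schwarz inequality (Lemma~\ref{Lemma:StimaProdoCONp}) and linearity/continuity to three subsequent lemmas, recording only a)--d) here; your route buys a shorter overall development (and arguably repairs a small informality in the paper, where the existence of the vector $P\xi$ representing the polarized form is not explicitly justified at this stage). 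One step you should spell out: the estimate $|b(x,\xi)|\le C|x|\,|\xi|$ does not follow from $I\le M|\cdot|^2$ alone (that only gives $|b(x,\xi)|\le M(|x|+|\xi|)^2$); you need Cauchy--Schwarz for the additive, $(\mathbb{Q}+i\mathbb{Q})$-homogeneous nonnegative form, obtained by expanding $0\le I(x+\lambda\xi)$ for rational $\lambda$ and extending to all $\lambda\in\C$ by continuity of the resulting polynomial in $\lambda$ --- precisely the content of the paper's Lemma~\ref{Lemma:StimaProdoCONp}. Since you have already established additivity and rational homogeneity before claiming boundedness, this is a compression rather than a gap, and with it made explicit the Riesz step (the functional $x\mapsto b(x,\xi)$ being additive and bounded, hence continuous, real-linear, and conjugate-linear or linear according to the convention) is sound; as you note, your conclusion is then strictly stronger than what the lemma asserts.
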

\begin{proof}
The proof uses~\cite[Sect.~9.2]{greub} (adapted to complex Hilbert
spaces) and it is an adaptation of the proof of~\cite[Theorem
5]{FavPand}.

 Recall  that    ${\mathcal U}(y_0)
$ is not empty  since  system~(\ref{base})  is null
controllable. So,
null controllability implies   that $ I(y_0) $ is finite for
every $ y_0\in H $.

Let us fix $ x_0 $ and $ \xi_0 $ in $ \mathcal D = {\rm dom}\, A$ and controls
$ u\in {\mathcal U}(x_0) $ and $ v \in {\mathcal U}(\xi_0)$ Then we have
\[
y(t;x_0\pm \xi_0,u\pm v)=y(t;x_0,u)\pm y(t;\xi_0,v)
 \]
  and $ J $ satisfies the parallelogram identity
  \[
  J(x_0 + \xi_0; u+v)+J(x_0 - \xi_0; u-v)=
  2\left [ J( x_0;u)+J(\xi_0;v)\right ]\,.
   \]
We must prove that $ I(x) $ satisfies the parallelogram identity
too. This part of the proof is the same as that in~\cite[Theorem~5]{FavPand} and it is reported for completeness.

We fix $x$ and $\xi$ and $\ZEP>0$ and we choose $u_x$ and $u_\xi$,
corresponding to the initial conditions $x$ and $\xi$, such that
$$
J(x;u_x)<I(x)+\ZEP/2\,,\qquad J(\xi; u_\xi)<I(\xi)+\ZEP/2.
$$
Then
\begin{eqnarray*}
&& J(x + \xi;u_x+u_\xi )+ J(x - \xi;u_x-u_\xi ) \\ &&
=2J(x;u_x)+2J(\xi; u_\xi )<2\left[ I(x)+I(\xi ) \right]+2\ZEP .
\end{eqnarray*}
This proves the inequality
\begin{align} \label{paral}
I(x+\xi )+I(x-\xi )\leq 2\left[ I(x)+I(\xi )\right]\,.
\end{align}
We prove that the inequality cannot be strict; i.e., we prove that
if $\ZEP$ satisfy
\begin{equation}
\ZLA{eq:PerLidPARALL}
I(x+\xi )+I(x-\xi )\leq 2\left[ I(x)+I(\xi )\right]-\ZEP
\end{equation}
then $\ZEP=0$.

If~(\ref{eq:PerLidPARALL}) holds then we can find $\tilde u$ and
$\tilde v$, corresponding to the initial states $x+ \xi$ and $x-
\xi$, such that
$$
J( x + \xi;\tilde u)+J(  x - \xi;\tilde v)\leq 2\left[I(x)+I(\xi
)\right]-\ZEP/2\,.
$$
For the initial conditions
$x, \xi$ we apply, respectively, controls
$$
u_0=\frac{\tilde u+\tilde v}{2}\,,\qquad v_0=\frac{\tilde u-\tilde v}{2}.
$$
Then
$$
2\left[ J(  x ; u_0)+J(   \xi;v_0) \right] = J( x + \xi; u_0+v_0)+
J( x - \xi;
 u_0-v_0) \leq 2\left[I(x)+I(\xi )\right]-\ZEP/2\,.
$$
We have also
\begin{equation}\ZLA{eq:ParaIDEforI}
I(x) +I(\xi ) \leq J( x ; u_0)+J( \xi; v_0)\leq \left[ I(x)+I(\xi
)\right]-\ZEP/4\,.
\end{equation}
This shows $\ZEP=0$ so that parallelogram identity holds.

The operator $  P$ is now constructed by polarization
 (compare with~\cite{Halmos}),
\begin{equation}\ZLA{eq:DefiPs}
 \ZL   x,{  P}\xi \ZR
 = I\left (
 \frac{1}{2}(x+\xi)
 \right )-
  I\left (
 \frac{1}{2}(x-\xi)
 \right )
 +i \left [I\left (
 \frac{1}{2}(x+i\xi)
 \right )
 -  I\left (
 \frac{1}{2}(x-i\xi)
 \right )\right ]\,.
\end{equation}
The property
 $
I(x)=\ZL x,Px\ZR
 $
 is a routine computation, using~(\ref{eq:ProBASEiX}).

 We prove property~{\bf a)}.
 Using~(\ref{eq:ProBASEiX})
 we see that the right hand side of~(\ref{eq:DefiPs}) is equal to:
 \begin{multline*}
 I\left (
 \frac{1}{2}(\xi+x)
 \right )-
  I\left (
 \frac{1}{2}(\xi-x)
 \right )
 +i I\left (
 \frac{1}{2}(i\xi+x)
 \right )
 -i I\left (
 \frac{1}{2}(i\xi-x)
 \right )\\
 =I\left (
 \frac{1}{2}(\xi+x)
 \right )-
  I\left (
 \frac{1}{2}(\xi-x)
 \right )
 +i I\left (
 \frac{1}{2}(\xi-ix)
 \right )
 -i I\left (
 \frac{1}{2}(\xi+ix)
 \right )\\
 =\overline{ \ZL \xi,Px\ZR }=\ZL Px,\xi\ZR\,.
 \end{multline*}
In order
 to see property {\bf b)} it is sufficient
 to prove additivity of the
real part. In fact, using    $4I(y_0)= I(2 y_0) $, we check that
\begin{align}
  \zreal \left (4\ZL y,P(\xi+x)\ZR\right )=
  \zreal \left ( \ZL2 y,P2(\xi+x)\ZR\right )
=I(x+\xi+y)-I(x+\xi-y)\\
 =4 \zreal  \left ( \ZL y,P \xi \ZR+\ZL y,P x\ZR\right )
 \label{right}
 = I(\xi+y)-I(\xi-y)+I(x+y)-I(x-y)\,.
\end{align}
Using the parallelogram identity for $ I(x) $, i.e.,~(\ref{paral})
with $ = $ instead of $\le$, and associating the terms of equal
signs, we see that the right hand side of \eqref{right} is equal to
\begin{multline*}
\frac{1}{2}\left [
I(x+\xi+2y)+I(\xi-x)
\right ]-\frac{1}{2}\left [
I(x+\xi-2y)+I(\xi-x)
\right ]\\
=\frac{1}{2}\left [
I(x+\xi+2y)-I(x+\xi-2y)
\right ]\\
=\frac{1}{2}\left \{-I(x+\xi)+
2\left [ I(x+\xi+y)+I(y)\right ]
+I(x+\xi)
-
2\left [ I(x+\xi-y)+I(y)   \right ]
\right \}\\
=I(x+\xi+y)-I(x+\xi-y)
\end{multline*}
as wanted.

Property {\bf c)} for $ q $ real rational is consequence of {\bf
b)}, as in~\cite[Sect.~9.2]{greub}. When $ q=i $ equality follows
since
 {\bf a)}
easily shows
 \[
 \ZL x,P(i\xi)\ZR=-i\ZL x,P\xi\ZR=\ZL x,
 iP\xi\ZR\qquad {\rm i.e.,}\qquad P(i\xi)=iP\xi\,.
  \]
Hence, property {\bf c)} holds also for $ iq $ with real rational $
q $ and then it holds for every complex number with rational real
and imaginary parts.

 Property {\bf d)} is obvious.
 \end{proof}

Now we prove:
\begin{Lemma}\ZLA{teo:USObaire} Assume Hypothesis~\ref{Hyp:0}
and suppose that  system~(\ref{base}) is null controllable. Then,
there exists a number $ M $ such that
\[
 I(y)\leq M|y|^2,\;\;\; y \in H.
 \]
\end{Lemma}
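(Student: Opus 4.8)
The plan is to exhibit, for each $y_0\in H$, a \emph{single} admissible control whose energy is bounded by $M|y_0|^2$. Since $I(y_0)$ is, by definition~(\ref{eq:FormaQUADR}), the infimum of the energies $J(y_0;u)$ over all $u\in{\mathcal U}(y_0)$, producing one such control immediately yields the claimed estimate, with no further optimization needed.

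First I would reduce null controllability (in the sense of Definition~\ref{defi:CONTROREST}, where the time $T_{y_0}$ may depend on the state) to null controllability at a \emph{fixed} time. This is exactly the content of Lemma~\ref{Lemma:uniFOcontrolTIME}: there exists a time $T_0$ such that every $y_0\in H$ can be steered to rest in time $T_0$. (This is the step resting on Baire's theorem, already carried out.) With $T_0$ in hand I would then invoke Lemma~\ref{Lemma:stimaCONTR(OLLO} with $T=T_0$: there is a constant $M>0$, independent of $y_0$, and a control $u^{y_0,T_0}$ steering $y_0$ to rest in time $T_0$ with $\int_0^{T_0}|u^{y_0,T_0}(t)|^2\ZD t\leq M|y_0|^2$.

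The remaining point is to check that this finite-time null control is \emph{admissible}, i.e. that $u^{y_0,T_0}\in{\mathcal U}(y_0)$. Extending $u^{y_0,T_0}$ by $0$ for $t>T_0$, the corresponding trajectory $y^{y_0,u^{y_0,T_0}}$ has support contained in $[0,T_0]$; on $[0,T_0]$ it is square integrable in $H$ by Hypothesis~\ref{Hyp:0}, and it vanishes for $t>T_0$. Hence $y^{y_0,u^{y_0,T_0}}\in L^2(0,+\infty;H)$, so that indeed $u^{y_0,T_0}\in{\mathcal U}(y_0)$. The admissibility then gives
\[
I(y_0)\leq J(y_0;u^{y_0,T_0})=\int_0^{+\infty}|u^{y_0,T_0}(t)|^2\ZD t
=\int_0^{T_0}|u^{y_0,T_0}(t)|^2\ZD t\leq M|y_0|^2,
\]
which is the assertion, with the same constant $M$ as in Lemma~\ref{Lemma:stimaCONTR(OLLO}.

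The only genuine (and rather mild) obstacle is the admissibility verification: because point-wise evaluation in $H$ is in general meaningless here, one cannot argue by continuity of the trajectory, and it is precisely Hypothesis~\ref{Hyp:0} that supplies the $L^2(0,T_0;H)$-regularity needed to place the controlled trajectory in $L^2(0,+\infty;H)$. Everything else is a direct consequence of the two preliminary lemmas.
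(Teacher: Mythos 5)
Your proof is correct, but it routes through different preliminary material than the paper does. The paper's own proof is two lines: it invokes the \emph{second} statement of Lemma~\ref{Lemma:uniFOcontrolTIME} (every element of a ball $B(0,r)$ can be steered to rest with control of $L^2$-norm at most $N$), which bounds $I$ on that ball, and then propagates the bound to all of $H$ by the quadratic homogeneity $I(\lambda y_0)=|\lambda|^2 I(y_0)$, giving $M=N^2/r^2$. You instead use the \emph{first} statement of Lemma~\ref{Lemma:uniFOcontrolTIME} (a uniform controllability time $T_0$) combined with the quantitative Lemma~\ref{Lemma:stimaCONTR(OLLO}, whose minimal-norm control $u^{y_0,T_0}=-\Lambda_{T_0}^*Q_{T_0}^\dagger e^{AT_0}(\omega I-A)^{-1}y_0$ already carries the estimate $\int_0^{T_0}|u^{y_0,T_0}|^2\,dt\le M|y_0|^2$ uniformly in $y_0$, so no scaling argument is needed. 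Both routes ultimately rest on the same Baire category argument inside Lemma~\ref{Lemma:uniFOcontrolTIME}, so neither is more general; yours leans on the heavier operator-theoretic construction (the pseudoinverse $Q_T^\dagger$ and the closed graph theorem), which the paper has available anyway and exploits elsewhere (e.g., in Theorem~\ref{starAt}). A genuine merit of your write-up is the explicit admissibility verification that $u^{y_0,T_0}\in{\mathcal U}(y_0)$: since pointwise evaluation of trajectories in $H$ is meaningless here, one must argue that the trajectory, supported in $[0,T_0]$ (cf.\ Remark~\ref{Rema:SuiDIVERSItempiniz}) and lying in $L^2(0,T_0;H)$ by Hypothesis~\ref{Hyp:0}, belongs to $L^2(0,+\infty;H)$; the paper's terse proof leaves this point entirely implicit, and your spelling it out closes a small but real gap in the exposition.
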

\begin{proof}
It is sufficient to prove that $ I(y) $ is bounded in a ball since
$I (\lambda y_0) = |\lambda|^2 I(y_0)$, $\lambda \in \C$, $y_0 \in
H$. This is known, see the second statement in
Lemma~\ref{Lemma:uniFOcontrolTIME}.
\end{proof}

For the moment, we can't say that the operator $ P $ is linear,
i.e., that $ P(qx)=qPx $ for every \emph{real} $ q $. This will be
proved below, as a consequence of this version of Schwarz
inequality, which can be proved using solely the
  properties stated in Lemmas~\ref{teo:defiFOqquadr}
  and~\ref{teo:USObaire}:

  \begin{Lemma}\ZLA{Lemma:StimaProdoCONp}
  Assume Hypothesis~\ref{Hyp:0}
and suppose that  system~(\ref{base}) is null controllable.
 Then, we
have
 \begin{align} \label{ve}
  |
 \ZL Py,x\ZR |=|\ZL y,Px\ZR
  |\leq M |y | |x|,\;\;\, x,y\in H.
  \end{align}
  \end{Lemma}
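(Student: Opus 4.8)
The plan is to derive the bound as a Cauchy--Schwarz inequality for the nonnegative functional $I$, and then to insert the quadratic estimate of Lemma~\ref{teo:USObaire}. The decisive first step is to record an exact identity for $I(x+qy)$ when $q\in\C$ has \emph{rational} real and imaginary parts. For such $q$ the additivity (property {\bf b}) and the rational homogeneity (property {\bf c}) of $P$ give $P(x+qy)=Px+qPy$; expanding $\ZL x+qy,\,P(x+qy)\ZR$ by sesquilinearity of the inner product and using the symmetry relation (property {\bf a}), which yields $\ZL Py,x\ZR=\ZL y,Px\ZR$ and hence $\ZL y,Px\ZR=\overline{\ZL x,Py\ZR}$, one obtains
\begin{equation}\label{eq:planCS}
I(x+qy)=I(x)+2\zreal\bigl(q\,\ZL x,Py\ZR\bigr)+|q|^2\,I(y)
\end{equation}
(in the opposite inner-product convention, replace $q$ by $\bar q$ in the middle term). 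By property {\bf d} the left-hand side is $\ge 0$, so the right-hand side of \eqref{eq:planCS} is $\ge 0$ for every $q\in\mathbb Q+i\mathbb Q$.

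Next I would remove the rationality restriction, and this is the only genuinely delicate point. Since $P$ is not yet known to be continuous or real-homogeneous, one cannot pass to limits inside $I$. Instead one observes that the right-hand side of \eqref{eq:planCS} is, once the identity has been established, an \emph{explicit} continuous function of $q\in\C$: it is a real quadratic polynomial in the real and imaginary parts of $q$, with fixed coefficients $I(x)$, $I(y)$, and $\ZL x,Py\ZR$. A continuous function that is nonnegative on the dense set $\mathbb Q+i\mathbb Q$ is nonnegative everywhere, so the inequality in \eqref{eq:planCS} holds for \emph{all} $q\in\C$.

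With \eqref{eq:planCS} valid for all complex $q$, the conclusion is the classical discriminant argument. Choosing the argument of $q$ so that the cross term is as negative as possible and writing $s=|q|\ge 0$, inequality \eqref{eq:planCS} reads $I(x)-2s\,|\ZL x,Py\ZR|+s^2 I(y)\ge 0$ for every $s\ge 0$; since this is a nonnegative real quadratic in $s$, its discriminant is $\le 0$, giving $|\ZL x,Py\ZR|^2\le I(x)\,I(y)$. Finally I would invoke Lemma~\ref{teo:USObaire} to bound $I(x)\le M|x|^2$ and $I(y)\le M|y|^2$, which yields $|\ZL x,Py\ZR|\le M|x|\,|y|$. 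The chain of equalities $|\ZL Py,x\ZR|=|\ZL y,Px\ZR|=|\ZL x,Py\ZR|$ follows from property {\bf a} together with conjugate symmetry of the inner product, so \eqref{ve} is proved.
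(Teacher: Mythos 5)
Your proposal is correct and follows essentially the same route as the paper's proof: nonnegativity of the quadratic $I(x)+2\zreal\bigl(q\ZL x,Py\ZR\bigr)+|q|^2I(y)$ for $q$ with rational real and imaginary parts (via properties {\bf a)}--{\bf d)} of Lemma~\ref{teo:defiFOqquadr}), extension to all $q\in\C$ by continuity of this fixed-coefficient polynomial, and then the classical discriminant choice of $q$ together with the bound $I(z)\leq M|z|^2$ of Lemma~\ref{teo:USObaire}. Your careful justification of the continuity step---extending the nonnegativity of the explicit polynomial rather than passing limits through $I$ or the not-yet-continuous $P$---is precisely what the paper's terse ``extended by continuity'' means.
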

  \begin{proof}
 The inequality
is obvious if $ \ZL Py,x\ZR=0 $. Otherwise, we note the following
equality, which holds for every complex number $ \zl $ which has
rational real and imaginary parts:
  \[
  0\leq \ZL Px,x\ZR+2\zreal\left (\lambda \ZL Py,x\ZR\right )+|\lambda|^2\ZL Py,y\ZR\,.
   \]
This inequality is extended to every complex $\lambda$ by
continuity. The usual choice $ \lambda=-\left (\ZL P x,x\ZR\right )
/(\ZL Py,x\ZR) $ gives
\[
|\ZL Py,x\ZR|\leq \sqrt{\ZL Px,x\ZR\ZL Py,y\ZR}=\sqrt{I(x)I(y)}\leq
M |x| |y| \,.
 \]
  \end{proof}

Finally we can prove:
\begin{Lemma} Assume Hypothesis~\ref{Hyp:0}
and suppose that  system~(\ref{base}) is null controllable.
Then the
operator $ P $ defined in Lemma~\ref{teo:defiFOqquadr}
 is linear and continuous on $ H $.
 Hence, it is selfadjoint and non-negative.
\end{Lemma}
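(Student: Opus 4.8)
The plan is to use the boundedness furnished by Lemma~\ref{Lemma:StimaProdoCONp} to upgrade $P$ from the merely additive, rationally homogeneous map produced in Lemma~\ref{teo:defiFOqquadr} to a genuine bounded linear operator. Properties {\bf b)} and {\bf c)} already give additivity and $P(qx)=qPx$ for every $q$ with rational real and imaginary parts; in particular $P(ix)=iPx$. Hence the only missing ingredient of $\C$-linearity is homogeneity over $\R$: once $P(rx)=rPx$ is known for every real $r$, a general complex scalar $c=a+ib$ (with $a,b\in\R$) is handled by additivity,
\[
P(cx)=P(ax)+P(b\,ix)=a\,Px+b\,P(ix)=a\,Px+b\,i\,Px=c\,Px .
\]

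To establish real homogeneity I fix $x\in H$, a real number $r$, and rationals $q_n\to r$. Additivity (property {\bf b)}) gives $P(rx)-P(q_nx)=P\big((r-q_n)x\big)$, while property {\bf c)} gives $P(q_nx)=q_nPx$; therefore, for an arbitrary $\xi\in H$,
\[
\langle P(rx)-rPx,\xi\rangle=\langle P\big((r-q_n)x\big),\xi\rangle+(q_n-r)\langle Px,\xi\rangle .
\]
By the Schwarz-type estimate of Lemma~\ref{Lemma:StimaProdoCONp} the first term is bounded by $M\,|r-q_n|\,|x|\,|\xi|\to0$, and the second term plainly tends to $0$; since $\xi$ is arbitrary, $P(rx)=rPx$. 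This is the step where boundedness is indispensable, and it is the only genuine obstacle: an additive, merely $\mathbb{Q}$-homogeneous map need not be $\R$-homogeneous (the usual Hamel-basis pathologies), and it is exactly the inequality $|\langle Py,x\rangle|\le M|y||x|$ that excludes such behaviour.

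With linearity established, continuity follows at once from the same lemma: $|Px|=\sup_{|\xi|\le1}|\langle Px,\xi\rangle|\le M|x|$, so $P\in\mathcal L(H)$ with $\|P\|\le M$ (one may alternatively invoke the bound $I(y)\le M|y|^2$ of Lemma~\ref{teo:USObaire}). Finally, for a bounded linear operator property {\bf a)}, namely $\langle Px,\xi\rangle=\langle x,P\xi\rangle$ for all $x,\xi\in H$, is precisely the statement $P=P^*$, while property {\bf d)} gives $\langle y_0,Py_0\rangle=I(y_0)\ge0$. Thus $P$ is bounded, self-adjoint and non-negative, as claimed.
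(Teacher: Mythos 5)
Your proof is correct and takes essentially the same route as the paper: approximate a real (or complex) scalar by rationals, use additivity of $P$ together with the Schwarz-type bound of Lemma~\ref{Lemma:StimaProdoCONp} to pass to the limit, then deduce continuity from the same bound and read off self-adjointness and non-negativity from properties \textbf{a)} and \textbf{d)}. The only (harmless) cosmetic difference is that you detour through real homogeneity and test weakly against arbitrary $\xi$, whereas the paper treats a general complex scalar $q_0$ in a single step via the norm estimate $|P((q_n-q_0)\xi)|\le M\,|q_n-q_0|\,|\xi|$.
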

\begin{proof}
We first prove that for every complex $ q_0 $ and every $ \xi\in H $ we have
\[
P(q_0\xi)=q_0P\xi\,.
 \]

 Let $ q_n\to q_0 $ be a sequence with \emph{rational} real and imaginary parts. Then we have (Lemma~\ref{Lemma:StimaProdoCONp} is used in the second line)
\begin{eqnarray*}
&& \lim P(q_n\xi)=\lim q_nP\xi=q_0P\xi\,,\\
&& |P(q_n\xi)-P(q_0\xi)|=| P(q_n -q_0)\xi | =\sup _{|y|=1}\ZL y,
P(q_n -q_0 )\xi\ZR
 \leq M |q_n-q_0| \, |\xi|.
\end{eqnarray*}
So, $ q_0P\xi=\lim q_nP\xi=\lim P(q_n\xi)=P(q_0\xi)$. This gives
linearity of the operator $ P $ which, from
Lemma~\ref{teo:defiFOqquadr} is everywhere defined  and symmetric.
Continuity follows immediately from \eqref{ve}.
\end{proof}

 An obvious but important observation is the following one:
the time $ 0 $ as initial time has no special role and we can repeat
the previous arguments, for every initial time $ \tau \ge 0 $ and $
y_0\in H $. Hence we can define $P_{\tau} : H \to H$ such that
 \begin{equation}
\ZLA{eq:INfTAU} \ZL y_0,P_\tau y_0\ZR=\inf  \int_\tau ^{+\infty}|
u(s)|^2\ZD s,
\end{equation}
   where the infimum is computed on the set
   \[
   {\mathcal U}(y_0,\tau)=\{ u\in L^2(\tau,+\infty) \,: \,\, y(t;
   \tau , y_0,u)\in L^2(\tau,+\infty)\}\,.
    \]
       We have a family $ P_\tau $ of linear operators,
 and $ P_0=P $ is the  operator defined in~(\ref{eq:DeFInixDiP}). The observation is:
       \begin{Lemma}
  Assume Hypothesis~\ref{Hyp:0}
and suppose that  system~(\ref{base}) is null controllable.   Then
the operator $ P_\tau $ does not depend on $ \tau $:
  \[
  P_\tau=P_0=P\,.
   \]
   \end{Lemma}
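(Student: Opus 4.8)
The plan is to exploit the time-homogeneity of the control system, which is the structural reason $P_\tau$ cannot depend on $\tau$. First I would recall the definition: $\langle y_0, P_\tau y_0\rangle = \inf \int_\tau^{+\infty} |u(s)|^2\,ds$ over controls $u \in \mathcal U(y_0,\tau)$, where the trajectory $y(\cdot;\tau,y_0,u)$ starts from $y_0$ at time $\tau$ and lies in $L^2(\tau,+\infty;H)$. The essential observation is that the system $\dot y = \A y + Bu$ is \emph{autonomous}, so a simple time-shift provides a bijection between admissible controls on $[\tau,+\infty)$ and admissible controls on $[\sigma,+\infty)$ that preserves the $L^2$ cost.

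The key step is the change of variables. Fix $\tau, \sigma \ge 0$ and let $y_0 \in H$. Given a control $u \in \mathcal U(y_0,\tau)$, define the shifted control $\tilde u(s) = u(s + \tau - \sigma)$ for $s \ge \sigma$. By the semigroup property and the representation \eqref{eq:SoluBASE2}, the trajectory satisfies $y(s;\sigma,y_0,\tilde u) = y(s + \tau - \sigma;\tau,y_0,u)$, so $\tilde u \in \mathcal U(y_0,\sigma)$ precisely when $u \in \mathcal U(y_0,\tau)$, and the substitution $s \mapsto s + \tau - \sigma$ in the integral gives
\[
\int_\sigma^{+\infty} |\tilde u(s)|^2\,ds = \int_\tau^{+\infty} |u(s)|^2\,ds\,.
\]
This shift is invertible (with the inverse shift recovering $u$ from $\tilde u$), so it is a cost-preserving bijection between $\mathcal U(y_0,\tau)$ and $\mathcal U(y_0,\sigma)$. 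Taking the infimum over each admissible set, and noting the infima range over identical sets of values, yields $\langle y_0, P_\tau y_0\rangle = \langle y_0, P_\sigma y_0\rangle$ for every $y_0 \in H$.

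Finally I would upgrade the equality of quadratic forms to equality of operators. Since $P_\tau$ and $P_\sigma$ are each linear, bounded, and selfadjoint (by the preceding lemma, which establishes these properties for the value-function operator at any initial time), the agreement $\langle y_0, P_\tau y_0\rangle = \langle y_0, P_\sigma y_0\rangle$ for all $y_0$ forces $P_\tau = P_\sigma$ by polarization over the complex Hilbert space $H$. Choosing $\sigma = 0$ gives $P_\tau = P_0 = P$. I do not expect a genuine obstacle here; the only point requiring a little care is verifying that the time-shift maps the \emph{admissibility constraint} $y(\cdot;\tau,y_0,u) \in L^2(\tau,+\infty;H)$ exactly onto the corresponding constraint at time $\sigma$, which follows from the autonomy of the dynamics together with Hypothesis~\ref{Hyp:0} guaranteeing the trajectories are genuinely $H$-valued and locally square integrable.
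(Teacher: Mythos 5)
Your proposal is correct and follows essentially the same route as the paper: the time-shift $v(t)=u(t+\tau)$ exploiting autonomy of the dynamics, giving a cost-preserving bijection between admissible control sets, hence equality of the quadratic forms. Your explicit polarization step to pass from equal forms to equal operators is left implicit in the paper (and on a complex Hilbert space it is immediate), so the two arguments coincide in substance.
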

   \begin{proof}
 We observe the following equality, which holds for $ t>\tau $:
  \[
 y(t;\tau,y_0,u)=y(t-\tau;0,y_0,v)\,,\qquad v(t)=u(t+\tau),
   \]
   and
both $ v(t) $ and $  y(t;0,y_0,v) $, $ t\geq 0 $, are square
integrable on $(0, +\infty)$ if $ u(t) $ and  $ y(t;\tau,y_0,u) $
are square integrable on $ (\tau, + \infty) $. Hence, the infimum of
the functional in~(\ref{eq:INfTAU}) is
$ \ZL y_0,P_{\tau} y_0 \ZR  =\ZL y_0,Py_0\ZR$,
      i.e., $ P_\tau=P $.
 \end{proof}

\smallskip

\begin{proof}[\it  Proof of Theorem \ref{teo:KYP}]
We    write
 \[
     \ZL y_0,P y_0\ZR
 \leq \int_0^{+\infty} |u(s)|^2\ZD s=
  \int_0^{\tau} |u(s)|^2\ZD s+ \int_\tau^{+\infty} |u(s)|^2\ZD s\,.
  \]
We choose a control  $ u $ which is smooth on $ [0,\tau] $ so that $
y(\cdot;y_0,u)  $ is continuous (cf. Lemma~\ref{Lemma:ContY}) and we
keep the restriction of $ u $ to $ [0,\tau] $ fixed. The vector
$y(\tau;y_0,u)  $ being controllable to the rest, we can
use~(\ref{eq:ladefiPPRIMAvers}) with initial condition $ \tau $ and
we have
\[
\ZL y(\tau;y_0,u),P_\tau y(\tau;y_0,u)\ZR
=\inf \int_\tau^{+\infty} |u(s)|^2\ZD s
 \]
(as usual, the infimum is computed on those square integrable controls which produces a square integrable solution, on $ [0,+\infty) $).
So, we have
\[
  \ZL y_0,Py_0\ZR\leq \int_0^{\tau} |u(s)|^2\ZD s+\ZL y(\tau;y_0,u),P_\tau y(\tau;y_0,u)\ZR\,.
\]
Using the fact that $ P_\tau=P $ is independent of $ \tau $, we see
that the following inequality holds for every   control $
u  $ which is of class $ C^1 $ on $ [0,\tau] $, every $y_0 \in H$ and $
t\ge 0 $:
  \begin{equation}
\ZLA{eq:INteINEQINTEGRALform1}
\text{(LOI)} \qquad   \ZL  Py(t;y_0,u),y(t;y_0,u)\ZR-\ZL  Py_0,y_0\ZR+
   \int_0^t |u(s)|^2\ZD s\geq 0\,.
\end{equation}
 Finally a standard approximation argument shows
that $(LOI)$ holds even if $u \in L^2_{loc}(0, +\infty;
U)$.
\end{proof}

\medskip

Combining Corollary~\ref{Corollary:IyENCVE}
 and Theorem~\ref{teo:KYP},  we get:
   \begin{Corollary} \label{cia}
 Assume Hypothesis~\ref{Hyp:0}
and suppose that  system~(\ref{base}) is null controllable. Then
System~(\ref{base}) is NCVE if and only if $ P=0 $.
   \end{Corollary}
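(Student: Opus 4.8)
The plan is to combine the two results that immediately precede this statement, namely Corollary~\ref{Corollary:IyENCVE} and Theorem~\ref{teo:KYP}, so that essentially no new work is required. First I would note that by Lemma~\ref{Lemma:uniFOcontrolTIME} null controllability and null controllability at a fixed time $T>0$ are equivalent; this licenses the application of Corollary~\ref{Corollary:IyENCVE} under the sole hypothesis (null controllability) assumed here. That corollary tells us that System~(\ref{base}) is NCVE if and only if $I(y_0)=0$ for every $y_0\in H$.

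Next I would invoke Theorem~\ref{teo:KYP}, which produces a self-adjoint operator $P=P^*\in{\mathcal L}(H)$ satisfying $I(y_0)=\ZL y_0,Py_0\ZR$ for all $y_0\in H$. Substituting this representation into the criterion above, the NCVE property becomes equivalent to the assertion that the quadratic form $y_0\mapsto\ZL y_0,Py_0\ZR$ vanishes identically on $H$.

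It then remains to pass from the vanishing of this quadratic form to $P=0$. The implication $P=0\Rightarrow\ZL y_0,Py_0\ZR=0$ is trivial. For the converse I would use that $H$ is a \emph{complex} Hilbert space: by the polarization identity the off-diagonal values $\ZL x,Py\ZR$ are recovered from the diagonal values $\ZL z,Pz\ZR$, so if every diagonal value is zero then $\ZL x,Py\ZR=0$ for all $x,y\in H$, whence $P=0$. (The self-adjointness of $P$ furnished by Theorem~\ref{teo:KYP} gives an equivalent one-line justification of the same step.)

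I do not expect a genuine obstacle in this argument: the substantive content—the existence, self-adjointness, and variational characterization of $P$—has already been established in Theorem~\ref{teo:KYP}, and the equivalence of NCVE with $I\equiv 0$ in Corollary~\ref{Corollary:IyENCVE}. The present Corollary is thus a clean reformulation, and the only point deserving a word of justification is the deduction $\ZL y_0,Py_0\ZR\equiv 0\Rightarrow P=0$, which hinges on working over $\C$ rather than over $\R$.
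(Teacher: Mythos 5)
Your proposal is correct and coincides with the paper's own argument: the paper obtains Corollary~\ref{cia} precisely by combining Corollary~\ref{Corollary:IyENCVE} with Theorem~\ref{teo:KYP}, leaving the step $\ZL y_0,Py_0\ZR\equiv 0\Rightarrow P=0$ implicit. Your explicit justification of that step via polarization over $\C$ (or via self-adjointness and non-negativity of $P$) is exactly the standard fact being tacitly used, so nothing further is needed.
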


   These are the preliminaries we need in order to prove
Theorem~\ref{secondo} an equivalent formulation of which is as follows:

\begin{Theorem} \label{secondoFORMAdue}
 Assume  Hypotheses~\ref{Hyp:0} and~\ref{serve}
 and furthermore suppose that $ s(A)\leq 0 $.
  If   system~(\ref{base}) is
  null controllable at some time $T>0$,
then $ P=0 $.
\end{Theorem}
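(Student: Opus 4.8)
The plan is to prove the equivalent assertion $P=0$ of Theorem~\ref{secondoFORMAdue}. By Corollary~\ref{cia} this is the same as $I(y_0)=0$ for all $y_0$, and since $P=P^*\ge 0$ is bounded, the relation $\langle Py_0,y_0\rangle=I(y_0)=0$ already forces $Py_0=0$. Because $P$ is linear and continuous, $H=H_s\oplus H_1$, and the generalized eigenvectors are linearly dense in $H_1$ (Hypothesis~\ref{serve}), I would reduce everything to showing $Py_0=0$ in two situations only: $y_0\in H_s$, and $y_0$ a generalized eigenvector of $A$. The role of $s(A)\le 0$ is precisely to ensure that every such eigenvector is attached to a $\lambda$ with $\Re\lambda\le 0$.

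The decaying directions fall out directly from the integral inequality~(\ref{eq:INteINEQINTEGRALform}). Taking $u\equiv 0$ there yields $\langle Pe^{At}y_0,e^{At}y_0\rangle\ge\langle Py_0,y_0\rangle\ge 0$ for every $t\ge 0$. Whenever $e^{At}y_0\to 0$ the left-hand side tends to $0$ by boundedness of $P$, so $\langle Py_0,y_0\rangle=0$ and hence $Py_0=0$. This disposes of all $y_0\in H_s$, and also of every generalized eigenvector attached to an eigenvalue $\lambda$ with $\Re\lambda<0$, since for the latter $e^{At}y_0=e^{\lambda t}p(t)$ with $p$ a polynomial, so $e^{At}y_0\to 0$.

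The core case is $\lambda=i\beta$ purely imaginary, where the decay argument fails. I would fix a Jordan chain $x_1,\dots,x_k$ (so $Ax_1=i\beta x_1$ and $(A-i\beta I)x_m=x_{m-1}$) and induct on $m$ to prove $I(x_m)=0$. The inductive hypothesis gives $Px_i=0$ for $i<m$, hence $P=0$ and $I=0$ on the $A$-invariant subspace $W=\mathrm{span}\{x_1,\dots,x_{m-1}\}$, with $I(w)=0$ for every $w\in W$ regardless of its norm. Two facts drive the construction: the whole chain space $V=\mathrm{span}\{x_1,\dots,x_k\}$ is $A$-invariant, and over one period the $x_m$-coordinate is merely rotated, $e^{AT}x_m=e^{i\beta T}(x_m+Tx_{m-1}+\cdots)$, of modulus one. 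Let $u_0$ steer $x_m$ to rest in time $T$ with $\|u_0\|^2\le M$ (Lemma~\ref{Lemma:stimaCONTR(OLLO}); its effect $\int_0^T e^{\A(T-s)}Du_0\,\mathrm ds=-e^{AT}x_m$ lies in $V$. I then run a bleed-off in periods of length $T$: if the state at the start of a period is $\zeta x_m+w$ with $w\in W$, applying $\tfrac{\zeta}{N}u_0$ multiplies the $x_m$-coordinate by $(1-\tfrac1N)e^{i\beta T}$ while dumping the excess into $W$, at cost $\le\tfrac{M}{N^2}|\zeta|^2$. Thus $|\zeta_k|=(1-\tfrac1N)^k$ and the accumulated cost is at most $\tfrac{M}{N^2}\sum_{k\ge 0}(1-\tfrac1N)^{2k}=\tfrac{M}{2N-1}$. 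After $K-1$ periods one more period with control $\zeta_{K-1}u_0$ cancels the $x_m$-coordinate exactly at cost $\le M(1-\tfrac1N)^{2(K-1)}$, leaving a state $w'\in W$, which is then steered to rest with energy $<\eta$ because $I(w')=0$. Letting first $K\to\infty$, then $N\to\infty$ and $\eta\to 0$, the total energy $\tfrac{M}{2N-1}+M(1-\tfrac1N)^{2(K-1)}+\eta\to 0$, so $Z(x_m)=I(x_m)=0$ by Theorem~\ref{starAt} and $Px_m=0$.

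The genuinely delicate part is exactly this purely imaginary Jordan case: there $e^{At}$ grows polynomially, so a naive strategy that waits and bleeds energy has a divergent cost. What rescues the scheme is that all the polynomially growing debris is deposited in the lower part $W$ of the chain, where by induction the value function vanishes identically, so it can be annihilated at the very end with arbitrarily little energy. I expect the bookkeeping that keeps the state inside the finite-dimensional space $V$ (so the $x_m$-coordinate is well defined) together with the uniformity of the constant $M$ over all periods (Remark~\ref{Rema:SuiDIVERSItempiniz}) to be the points requiring the most care.
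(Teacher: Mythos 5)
Your proof is correct, and while it follows the paper's skeleton --- reduction via Corollary~\ref{cia} and Hypothesis~\ref{serve} to the subspace $H_s$ plus generalized eigenvectors, the (LOI)-with-$u=0$ decay argument for $H_s$ and for chains with $\zreal\zl<0$, and induction along Jordan chains --- it replaces the paper's treatment of the critical case $\zreal\zl=0$ by a genuinely different argument. The paper stays entirely inside the Linear Operator Inequality: expanding (LOI) as in \eqref{eq:LOIconLambda}, it notes that $\ZL e^{At}y_0,Pe^{At}y_0\ZR$ is constant along an eigenvector of $i\omega$, kills the term linear in the state by the scaling $y_0\mapsto \mu y_0$ (obtaining $\ZL Py_N,(Lu)(t)\ZR=0$, with \eqref{eq:expone} giving $Pq_N(t)=0$ in the inductive step), and then feeds in a single rest-steering control to force $\ZL y_N,Py_N\ZR=0$. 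You instead prove $Z(x_m)=0$ constructively, using $P$ only through Corollary~\ref{cia}; your periodic bleed-off rests on exactly the quantitative inputs the paper has already established, namely Theorem~\ref{starAt} ($I=Z$), Lemma~\ref{Lemma:stimaCONTR(OLLO} together with the time-shift uniformity of Remark~\ref{Rema:SuiDIVERSItempiniz}, and the inductive vanishing of $I$ on the lower chain span $W$. The bookkeeping is sound: the period-boundary states $z_{k+1}=e^{AT}(z_k-\alpha_k x_m)$ remain in the finite-dimensional invariant flag (and are computed explicitly in the $\DASP$-continuous sense, landing in $H$, so no $C^1$-approximation as in Theorem~\ref{starAt} is needed to evaluate them); the modulus-one rotation $e^{i\omega T}$ of the top coordinate --- precisely where $\zreal\zl=0$ enters --- gives $|\zeta_k|=(1-1/N)^k$; and the cost $M/(2N-1)$ from the geometric series, plus the final exact cancellation and the cheap cleanup of $w'\in W$, is exactly right. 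One cosmetic slip: the steering identity should read $\int_0^T e^{\A(T-s)}Bu_0(s)\,\ZD s=-e^{AT}x_m$ with $B$, not $D$; if you prefer the regularized system \eqref{eq:baseREGOL} the initial datum becomes $(\omega I-A)^{-1}x_m$, which is harmless since $(\omega I-A)^{-1}$ preserves the chain flag with nonzero top coefficient. As for what each route buys: the paper's argument is shorter and purely algebraic once Theorem~\ref{teo:KYP} is in place, whereas yours avoids the scaling trick on (LOI) altogether and yields, as a byproduct, a quantitative decay rate for the minimal energy (balancing $K$ against $N$ gives roughly $(\log T)/T$ over a horizon $T$), in the spirit of the explicit computation of Section~\ref{sect:ExplEstimNCVE}.
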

\begin{proof}
 We
decompose $ H=H_s \oplus H_1 $ according to  Hypothesis~\ref{serve}
 and
we show that the restrictions of $P$   respectively to $H_s$ and
$H_1$
 are zero.

If $y_0 \in H_s$, then  by (LOI) with $u=0$ we get
$$
\left \langle  P e^{At} y_0, e^{At} y_0\right  \rangle  \ge \langle  Py_0,y_0\rangle\geq 0.
$$
By assumption, when $ y_0\in H_s $ we have
\[
\lim _{t\to+\infty  }    e^{At} y_0 =0\,.
 \]
Hence, letting
 $t \to + \infty$, we find $\langle  Py_0,y_0\rangle =0$ and so $Py_0=0$.

If $ H_1=\{0\} $, in particular if $ \ZSI(A)=\emptyset $, then $
H=H_1 $ and we are done. Otherwise, we prove that $ P $ is zero on $
H_1 $.

In order to prove that $ P $ is zero on $ H_1 $, \emph{ it is enough to verify  that
$ P z=0 $ on every generalized eigenvector $z$ of $ A $ which
belongs to $ H_1 $.\/}

Indeed the subspace generated by all the generalized eigenvectors of $ A
$ which belong to $ H_1 $ is dense  in this space and,
 moreover,  $P$ is continuous on $H_1$.

We recast the definitions of the generalized eigenvectors in the
form that we need. Let $ y_0 $ be an eigenvector of $ A $, $
Ay_0=\zl y_0 $. We associate to $ y_0 $ the ``Jordan chain'' whose
elements are the vectors $ y_k $ which, for $ k\geq 1 $, are defined
by
\begin{equation}
\ZLA{eq:DefiJORDANchain}
 A y_k=\zl y_k -  y_{k-1}\,.
\end{equation}
 This process ends at the index $ n $
if $y_{n+1}=0  $. So, a Jordan chain may be infinite or finite
(possible of length $ 1 $, reduced to $ y_0 $). The generalized
eigenvectors of $ A $ are the elements of a Jordan chain.

Note that the chain is identified by both $ \zl $ and its
eigenvector $ y_0 $: an eigenvalue $ \zl $ has as many (independent)
Jordan chains as   independent eigenvectors.

 A fact we shall use is that when $ y_k $ is an element of a Jordan chain
 (corresponding to an eigenvalue $ \zl $) then
\begin{equation}
\ZLA{eq:expone}
 e^{At} y_k= e^{\zl t}y_k+ q_k(t)\,,\qquad   \;
 q_k(t)=e^{\zl t}\sum _{n=1}^k \alpha_{k,n} y_{k-n}t^n  \;
\end{equation}
for suitable coefficients $  \alpha_{k,n}$. The important point to be noted is that \emph{ only the elements $ y_r $ of the chain, with $ r<k $, appear in the expression of $ q_k(t) $.}

Now we prove that $  Py=0$ if $ y $ is any generalized eigenvector of $ A $ in $ H_1 $. We distinguish two cases, that the eigenvalue has negative real part, or null real part (positive real part is impossible, due to our assumption $ s(A)\leq 0 $).

 \subsubsection{the case $ \zreal\zl<0 $}
Let $ y_N $ be a generalized eigenvector, defined by the sequence of
the equalities~(\ref{eq:DefiJORDANchain}) and let
\[
Y_N={\rm span}\{y_k\} _{0\leq k\leq N}\,.
 \]
 The subspace $ Y_N $ is invariant for $ A $
and there exist \emph{positive} number $ M $ and $ \sigma $ such that
\[
y\in Y_N\ \implies \left  | e^{At} y\right  |\leq M e^{-\sigma t} |y |\qquad \forall y\in Y_N\,.
 \]
 So, the same argument as used above for the case of the subspace $ H_s $ can be used here: (LOI) with $ u=0 $ gives
 \[
0\leq  \langle Py,y\rangle \leq \langle Pe^{At  }y,
e^{At}y\rangle\leq M^2 e^{-2\sigma t}\| P\|\, |y|
  \]
  and the right hand side tends to $ 0 $ for $ t\to+\infty $.

  This proves that $ P=0 $ on $ Y_N $, as wanted, when the corresponding eigenvalue has negative real part.

  \subsubsection{the case $ \zreal\zl=0 $}
We recall the notation $ (Lu)(t) $ from~(\ref{eq:TrasfCONTROevolNELduale}) so that
\[
y(t;y_0,u)=e^{At} y_0+(Lu)(t)
 \]
and (LOI) can be written as
\begin{align}
\nonumber
\left \{
\left \langle e^{At} y_0,Pe^{At} y_0\right \rangle -\left \langle y_0,Py_0\right \rangle
\right \}+2\zreal \left \langle (Lu)(t),Pe^{At} y_0\right \rangle \\
\ZLA{eq:LOIconLambda} +\left \{ \intt \|u(s)\|^2\ZD s+\left \langle
(Lu)(t),P(Lu)(t)\right \rangle \right \}\, \ge 0\,.
\end{align}
Now let $ y_0 $ be   any eigenvector   of the eigenvalue $ i\omega$,
$ \omega\in\R$. Then
\[
\left \langle e^{At} y_0,Pe^{At} y_0\right \rangle=\left \langle
e^{i\omega t} y_0,Pe^{i\omega t} y_0\right \rangle=\left \langle
y_0,P  y_0\right \rangle
 \]
and the first brace in~(\ref{eq:LOIconLambda}) is equal $ 0 $.
 Replacing $y_0$ by $\mu y_0$,
$\mu \in \R$, we see that
 \[
   \langle P (L u)(t),y_0\rangle=0.
    \]
So, for every $ u\in L^2_{\rm loc}(0,+\infty) $ we have, for a.e. $
t \ge 0 $,
\begin{align} \label{ses}
    \left \langle y_0,P\left [ e^{A t}y_0+(Lu)(t)\right ]\right \rangle=
        \left \langle y_0,P\left [ e^{i\omega t}y_0+(Lu)(t)\right ]\right \rangle=
  e^{- i\omega t}
   \langle y_0,Py_0\rangle.
     \end{align}
The system being controllable to the rest at time $ T $, there exists a control such that
\[
e^{A t}y_0+(Lu)(t)=e^{i\omega t}y_0+(Lu)(t)
 \]
 has support in $ [0,T] $, and so  the left hand side of~(\ref{ses}) is zero   for $ t>T $. Hence,   $ \langle y_0,Py_0\rangle=0 $. As $ P=P^*\geq 0 $, we see that $ Py_0=0 $, as wanted.

 Now we extend this property to every element of the Jordan chain of $ y_0 $, using an induction argument.
 Let $  y_N$ be a generalized eigenvector of this chain and let us assume that $ Py_k=0 $ for $ k<N $. We prove that $ P y_N=0 $ too.

 Using formula~(\ref{eq:expone}), we see that the induction hypothesis implies
 \[
 P q_N(t)=0
  \]
 and combining (LOI),~(\ref{eq:LOIconLambda}) and~(\ref{eq:expone}) we get
 \[
 2\zreal e^{i\omega t}\left \langle P y_N,(L u)(t)\right \rangle
 +\left \{
\intt \|u(s)\|^2\ZD s+\left \langle (L u)(t),P(L u)(t)\right \rangle
\right \}\geq 0\,.
  \]
  As above, the part which is linear in $ y_N $ has to be zero,
  i.e.,
   $
  \left \langle P y_N,(L u)(t)\right \rangle=0
$,
   so that
   \[
    \left \langle y_N, P y(t;y_N,u)\right \rangle=
   \left \langle y_N, P\left (e^{At}y_N+(L u)(t)\right )\right \rangle=\left \langle Py_N,
   e^{i\omega t}y_N\right \rangle\,.
    \]
    We then control   $ y_N $  to the rest   using a suitable control $ u $ and, as above, we get $ Py_N=0 $. This ends the proof.
    \end{proof}

\section{Examples and applications}

Here we provide applications of our  result to boundary
control of parabolic coupled equations considered  in \cite{CARA} and to delay systems. The problem of finding explicit conditions on the minimal energy for large times is discussed in some details as well.

\subsection{Parabolic coupled system}

We will establish  {\it conditions under which the control system of
\cite{CARA} is NCVE.} We start from establishing some properties of the system.

The system is linear and of the form:
\begin{align}\label{cara}
\begin{cases}
y_t - y_{xx} = A_0 y \;\;\; \text{in}\; Q= (0,\pi ) \times (0,T),
\\
y(0, \cdot)= B_0 u,\;\;\;\; y(\pi ,0)=0 \;\; \text{in} \; (0,T),
\\
y(\cdot, 0)= y_0 \; \text{in} \; (0,\pi ),
\end{cases}
\end{align}
where $A_0 $ is a $2 \times 2$ real matrix and $B_0 \in \R^2,$ $u
\in L^2(0,T)$ is the control function and $y =
 {\rm col}\,\left(\begin{array}{cc}y_1, y_2\end{array}\right)$ is the
state variable.   As in \cite{CARA} we denote by
$$
L^2(0,\pi)^2
$$
the space $(L^2(0,\pi))^2$ and use similar notation  for other function spaces as well.

In \cite{CARA} the  interval $(0, \pi)$
 is replaced by $(0,1)$ and the
initial datum $y_0 \in
 H^{-1}(0,\pi)^2$  (the dual space of the space $H^1_0(0,\pi)^2=
H^1_0(0,\pi; \R^2)$) but in the sequel we will assume that
 $y_0 \in L^2(0,\pi)^2$.

Let $y_0 \in L^2(0,\pi )^2$ and $u \in L^2(0,T)$, in \cite{CARA} it
is proved that there exists a unique   solution   $y \in L^2(Q)^2$
to \eqref{cara}. This is defined by transposition, i.e., requiring
that, for each $g \in L^2(Q)^2$ one has

\begin{align}\label{cara1}
    \int_Q y \cdot g \, dx dt  =
\langle y_0, \phi(\cdot, 0) \rangle + \int_0^T B_0 \cdot
\phi_x(0,t)u(t) dt,
\end{align}
where $\langle \cdot, \cdot \rangle$ is the inner product in
 $L^2(0,\pi )^2$
and
$$
\phi \in L^2(0,T; H^{2}(0,\pi )^2) \cap C^0 ([0,T]; H^{1}_0(0,\pi
)^2)
$$
is the (unique) strong solution to the following equation involving $g$
$$
\begin{cases}
-\phi_t - \phi_{xx} = A_0^* \phi + g \;\;\; \text{in}\; Q= (0,\pi )
\times (0,T),
\\
\phi(0, \cdot)= 0,\;\;\;\; \phi(\pi ,0)=0 \;\; \text{in} \; (0,T),
\\
\phi(\cdot, T)= 0 \; \text{in} \; (0,\pi ).
\end{cases}
$$
It is not difficult to see that {\it the previous   solution $y$
can be written in the form \eqref{eq:SoluBASE2}.} In addition we see
that Hypotheses \ref{Hyp:0} and \ref{serve}  hold in this case. In
fact:

\begin{Proposition} \label{c} The system \eqref{cara}
can be written in the form \eqref{base} and in addition Hypotheses \ref{Hyp:0} and \ref{serve} hold.
\end{Proposition}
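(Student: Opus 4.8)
The plan is to identify the abstract operators, match the transposition solution \eqref{cara1} against the semigroup solution \eqref{eq:SoluBASE2}, and then check the two hypotheses in turn; I expect Hypothesis~\ref{serve} to come almost for free while the genuine work lies in setting up $B$ and in the fractional-power bookkeeping for Hypothesis~\ref{Hyp:0}.

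First I would take $H=L^2(0,\pi)^2$ and $U=\C$ (the scalar control, complexified), and set
\[
A\phi=\phi_{xx}+A_0\phi,\qquad \mathrm{dom}\,A=H^2(0,\pi)^2\cap H^1_0(0,\pi)^2 .
\]
Since $\partial_{xx}$ with Dirichlet conditions is self-adjoint, negative and has compact resolvent, and $A_0$ is a bounded matrix perturbation, $A$ generates an analytic semigroup on $H$ and has compact resolvent; its adjoint is $A^*\phi=\phi_{xx}+A_0^*\phi$ on the same domain. To encode the boundary control I would fix $\omega\in\rho(A)$ and introduce the Dirichlet lifting $N\colon U\to H$, $Ng=v$, where $v$ solves the explicitly integrable ODE $(\omega I-A_0)v-v_{xx}=0$ on $(0,\pi)$ with $v(0)=B_0g$, $v(\pi)=0$; hence $Ng$ is smooth and $N\in\mathcal L(U,H)$. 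Setting $B=(\omega I-A)N$, I would compute the adjoint by Green's formula as $B^*\psi=B_0\cdot\psi_x(0)$ for $\psi\in\mathrm{dom}\,A^*$, and then verify by integration by parts that the mild solution \eqref{eq:SoluBASE2} satisfies the transposition identity \eqref{cara1} for every test $g$; by uniqueness of the transposition solution this shows that \eqref{cara} is exactly \eqref{base} with this pair $(A,B)$.

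For Hypothesis~\ref{Hyp:0} the plan is to place the system inside the parabolic class of Section~\ref{sec:ClassesSYSTEMS}. Because $Ng$ violates the homogeneous Dirichlet condition at $x=0$, one has $Ng\in\mathrm{dom}((\omega I-A)^\theta)$ precisely for $\theta<1/4$, so $(\omega I-A)^\theta N$ is bounded for such $\theta$. Writing $B=(\omega I-A)^{1-\theta}(\omega I-A)^\theta N$ and dualizing yields, for every $\gamma\in(3/4,1)$, the pairing bound
\[
|\langle Bg,\psi\rangle|\le C\,|g|\,\big|(\omega I-A^*)^\gamma\psi\big|,
\]
hence $B\in\mathcal L\big(U,(\mathrm{dom}(\omega I-A^*)^\gamma)'\big)$, which is condition \eqref{dom} with $\gamma<1$. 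This delivers the singular estimate \eqref{DiseSINGOLARE}, and the Young-inequality argument already recorded in Section~\ref{sec:ClassesSYSTEMS} then gives Hypothesis~\ref{Hyp:0}.

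Finally, Hypothesis~\ref{serve} I expect to be essentially immediate. Expanding in the orthonormal basis $\{\sin(nx)\}_{n\ge1}$ of $L^2(0,\pi)$ decouples $A$ into the block-diagonal family $-n^2I_2+A_0$ acting on the Fourier coefficients in $\C^2$, and each such $2\times2$ matrix has a full set of generalized eigenvectors by Jordan theory. A direct computation shows that if $(A_0-\mu)^k w=0$ then $(A-(\mu-n^2))^k\big(w\sin(nx)\big)=0$, so the vectors $w\sin(nx)$ are generalized eigenvectors of $A$ and, as $w$ ranges over a Jordan basis of $\C^2$ and $n$ over $\N$, they are linearly dense in $H$. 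One may therefore simply take $H_1=H$ and $H_s=\{0\}$, and all three conditions of Hypothesis~\ref{serve} hold trivially. I anticipate the only real obstacle to be the careful first step: pinning down $B$ through $B^*\psi=B_0\cdot\psi_x(0)$, reproducing \eqref{cara1} from the semigroup solution via Green's formula, and fixing the fractional-domain exponent $\gamma\in(3/4,1)$ that makes the parabolic estimate applicable.
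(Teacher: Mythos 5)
Your proposal is correct and follows essentially the same route as the paper: the same $H$ and $A$ (analytic semigroup, compact resolvent, bounded perturbation of the Dirichlet Laplacian), the same Dirichlet lifting (the paper's $C$ solves $h''+A_0h=0$ with $B=-\mathcal{A}C$; your $N$ is the same map with spectral parameter $\omega$ and $B=(\omega I-\mathcal{A})N$), the same $\theta<1/4$ fractional-power bookkeeping — the paper estimates $\| A e^{At} C a\|_{H}\le C_T t^{\theta-1}|a|$ directly via the identification of $(H,\mathrm{dom}\,A_1)_{\theta,\infty}$ with $H^{2\theta}(0,\pi;\C)^2$ and then applies Young's inequality, which is exactly your singular estimate \eqref{DiseSINGOLARE} with $\gamma=1-\theta\in(3/4,1)$ — and the same density/limit argument identifying the mild solution \eqref{eq:SoluBASE2} with the transposition solution \eqref{cara1}. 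The one point where you genuinely diverge is Hypothesis~\ref{serve}: the paper merely observes that the resolvent is compact, so $\sigma(A)$ consists of eigenvalues of finite algebraic multiplicity, and in effect leans on the eventually-compact machinery of Corollary~\ref{coro:rema:nounif} (the explicit Jordan-chain structure only surfaces later, in the proof of Theorem~\ref{teoControllCARA}); your direct check that $(A_0-\mu)^k w=0$ implies $(A-(\mu-n^2))^k(w\sin(nx))=0$, so the vectors $w\sin(nx)$ form a linearly dense family of generalized eigenvectors and one may take $H_1=H$, $H_s=\{0\}$, is a clean, self-contained alternative that verifies Hypothesis~\ref{serve} verbatim rather than through the spectral-projection construction, at the cost of anticipating the eigenstructure computation the paper defers to Theorem~\ref{teoControllCARA}.
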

\begin{proof} We fix $H = L^2(0,\pi ; \C )^2$
and define the linear operator
$A: \text{dom} \,A  \subset H \to H$,
$$
\text{dom} \,A  = H^{2}(0,\pi ; \C)^2 \cap H^{1}_0(0,\pi ;
\C)^2,\;\;\; Af =  f_{xx} + A_0 f,\;\; f \in \text{dom}\,A\,.
$$
Since the operator  $A_1 = D_{xx}$ with
 $\text{dom}\,A_1 =\text{dom}\, A $
generates a compact holomorphic  semigroup on $H$ the same happens
for $A$ (even if, in general, $e^{At}$ is no more symmetric).
Indeed
$A_0$ is a  bounded perturbation  and we can apply
well-know results in \cite[Section 3.1]{Pa}.

 In particular the resolvent operator of $A$ is
 compact  and
 the spectrum $\sigma(A)$  consists entirely of eigenvalues
with finite algebraic multiplicity; this also shows that Hypothesis \ref{serve}
can be used.

\vskip 1mm We show that when $y_0 \in L^2(0,\pi )^2$ the unique
solution  of \eqref{cara}
$$
y \in L^2(Q)^2 = L^2 (0,T; L^2(0,\pi )^2)
$$
 defined by transposition
  can be written as in \eqref{eq:SoluBASE2} and so it
coincides with the solution to \eqref{base}.

 Note that at least for regular real functions $y_0 \in
 C^1([0,\pi])^2$ and $u \in C^1([0,T])$ one checks directly that the solution
$y= y^{y_0,u} $ to \eqref{cara} is given by
\begin{align}\label{for1}
y(t) = e^{At}y_0  -  A \int_0^t e^{A(t-s)} C u(s)\ZD s,
\end{align}
where, $C: \C \to C^2([0,\pi ])^2$, $h(x)=(Ca)(x)$, $x \in [0,\pi
]$, $a \in \C,$ is the unique   solution to
$$
\begin{cases}
 h'' + A_0 h= 0 \;\;\; \text{in}\; (0,\pi ),
\\
h(0)= aB_0,\;\;\;\; h(\pi )=0,
\end{cases}
$$
  for any $a \in \C$.

 Let us introduce the control operator $B$.
Setting $U = \C$, we define $
 B = -{\mathcal A} C, $
where ${\mathcal A}$ is the extension of $A$ to $({\rm dom}\, A^*)^{'}$, i.e.,
 ${\mathcal A}: H \to \text{(dom$\,A^*)^{'}$}$. Hence,
 we can rewrite  \eqref{for1} as
 \begin{align}\label{ba3}
  y(t) = e^{At}y_0  + \int_0^t e^{\A(t-s)} B u(s)\ZD s.
 \end{align}
 In order to check Hypothesis \ref{Hyp:0}  we
 first clarify that, for any $u \in L^2(0,T; \C)$, the
mapping:
\begin{align} \label{tes4}
t \mapsto -  A \int_0^t e^{A(t-s)} C u(s)\ZD s =
  \int_0^t
  e^{{\mathcal A}(t-s)} B u(s)\ZD s \;\; \text{belongs to $L^2(0,T;H)$}.
\end{align}
 Since $e^{At}$ is holomorphic, for any
 $\theta \in (0,1/4)$, for any
$f$ which belongs to the interpolation space
$$
(H, \text{dom}\,A )_{\theta, \infty} =
(H, \text{dom} \,A_1 )_{\theta, \infty}
$$
(see, for instance, \cite[pag. 148]{BD1})
we have $\| A e^{At} f\|_{ H} \le \frac{C_T}{t^{1-\theta}}\,  \| f\|_{(H, \text{dom} \,A)_{\theta, \infty} } $, $t \in (0,T)$. On the other hand, it is well known that
$$
 \text{dom} [(-A_1)^{\theta}] = (H, \text{dom} \,A_1 )_{\theta, \infty} =
  H^{2 \theta}(0,\pi ; \C)^2
$$
(with equivalence of norms). Since, in particular,
the operator
$$
C : \R \to  H^{2 \theta}(0,\pi ; \C)^2
$$
is continuous, it follows that  $\| A e^{At} C a\|_{ H} \le
\frac{C_T \, |a|}{t^{1-\theta}}$, $t \in (0,T)$, $a \in \C$. Now, by
the Young inequality for convolution we deduce easily  \eqref{tes4}.
Moreover, we also obtain that  the transformation $u \mapsto
\int_0^t
  e^{{\mathcal A}(t-s)} B u(s)\ZD s$ is linear and bounded from
   $L^2(0,T; \C)$ into $L^2(0,T; H)$ and
 so  Hypothesis \ref{Hyp:0} is satisfied.

\vskip 1mm It remains to show that for  $y_0 \in L^2(0,\pi )^2 $ and
$u \in L^2(0,T)$, the function given in \eqref{ba3} is the  weak
solution to \eqref{cara}.

\vskip 1mm  By \eqref{tes4}
 we know that $y = y^{y_0,u}\in L^2(0,T; L^2(0,\pi )^2)$.
 Choosing regular $(z_n)$ and $(u_n)$ converging to $y_0$ and $u$
respectively in $L^2(0,\pi )^2$ and $L^2(0,T)$ we note that the
solutions $y_n = y^{z_n, u_n}$ verify the identity \eqref{cara1}.

 Since $y_n$ converges to $y$ in $L^2(0,T;L^2(0,\pi )^2)$, passing to the limit
 as $n \to \infty$ in \eqref{cara1} we deduce that $y$ verifies
 \eqref{cara1}.
\end{proof}

\subsection{Null controllability with vanishing energy.}
In \cite{CARA} it is proved that \eqref{cara} is null controllable
at any time $T>0$ if and only if
\begin{equation}\ZLA{eq:CondiCONTRLLf-C}
 \text{rank$[A_0| B_0]= \text{rank}[B_0, A_0 B_0] = 2$}
  \;\;\; \text{and} \;\;\;
{\mu_1 - \mu_2}
\not = j^2 - k^2, \;\; \forall k,j \in
\mathbb{N},
\end{equation}
with $j \not =k$, where $\mu_1$ and $\mu_2$ are the eigenvalues of
$A_0$.

We can prove the following additional result.

\begin{Theorem}\label{teoControllCARA}
Assume that the control system \eqref{cara} is null controllable
(i.e., that condition~(\ref{eq:CondiCONTRLLf-C}) hold). Then the
system \eqref{cara} is NCVE if and only if
\begin{align}\label{se4}
\zreal (\mu_i) \le  1,\;\; i =1,2.
\end{align}
\end{Theorem}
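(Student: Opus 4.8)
The plan is to reduce the assertion to the purely spectral criterion furnished by Corollary~\ref{coro:rema:nounif}, and then to compute $\sigma(A)$ explicitly. By Proposition~\ref{c} the operator $A$ generates a compact holomorphic semigroup and Hypothesis~\ref{Hyp:0} holds; in particular $e^{At}$ is eventually compact, so Corollary~\ref{coro:rema:nounif} applies. Since we are assuming null controllability (that is, condition~\eqref{eq:CondiCONTRLLf-C}), the Corollary tells us that system~\eqref{cara} is NCVE if and only if $s(A)\le 0$. The whole proof therefore reduces to showing that $s(A)\le 0$ is equivalent to~\eqref{se4}, i.e.\ to $\zreal(\mu_i)\le 1$ for $i=1,2$.

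To compute the spectrum I would exploit the block structure of $A=A_1+A_0$, where $A_1=D_{xx}$ acts component-wise and $A_0$ is bounded multiplication by the constant matrix. Using the orthonormal basis $\{\sqrt{2/\pi}\,\sin(kx)\}_{k\ge 1}$ of $L^2(0,\pi)$, one decomposes $H=L^2(0,\pi;\C)^2$ into the orthogonal sum of the two-dimensional subspaces $V_k=\{\sin(kx)\,v:v\in\C^2\}$. Since $\sin(kx)$ is a Dirichlet eigenfunction of $D_{xx}$ with eigenvalue $-k^2$ and $A_0$ sends $\sin(kx)\,v$ to $\sin(kx)\,A_0v$, each $V_k$ is invariant and the restriction of $A$ to $V_k$ is the $2\times 2$ matrix $-k^2 I+A_0$ (here $I$ is the identity matrix). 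Hence $A$ is block-diagonal, with $k$-th block spectrum $\{\mu_1-k^2,\ \mu_2-k^2\}$.

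Next I would assemble the spectrum. Because the block eigenvalues tend to $-\infty$ as $k\to\infty$, the block resolvents are uniformly bounded for every $\lambda$ bounded away from the union of the block spectra, so that
\[
\sigma(A)=\bigcup_{k\ge 1}\sigma\!\left(-k^2 I+A_0\right)=\{\mu_i-k^2:\ i=1,2,\ k\ge 1\},
\]
a discrete set accumulating only at $-\infty$ (consistently with the compactness of the resolvent noted in Proposition~\ref{c}). Taking real parts and observing that the supremum over $k$ is attained at $k=1$ gives $s(A)=\max_i\zreal(\mu_i)-1$. Consequently $s(A)\le 0$ if and only if $\max_i\zreal(\mu_i)\le 1$, which is precisely~\eqref{se4}; combined with the equivalence NCVE $\Longleftrightarrow s(A)\le 0$ from Corollary~\ref{coro:rema:nounif}, this proves the theorem.

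The only genuinely delicate point is the spectral computation, and specifically the claim that $\sigma(A)$ equals the union (rather than merely the closure of the union) of the block spectra, with no extra spectrum appearing when $A_0$ fails to be diagonalizable. The uniform resolvent estimate sketched above settles this without any diagonalizability assumption on $A_0$, since it depends only on the growth $-k^2$ of the diagonal part; everything else is routine.
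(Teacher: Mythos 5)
Your proposal is correct, and the reduction step is exactly the paper's: both invoke Proposition~\ref{c} and Corollary~\ref{coro:rema:nounif} (the paper phrases it as combining Theorems~\ref{primo} and~\ref{secondo}) to reduce NCVE, given null controllability, to $s(A)\le 0$, and both then show $\sigma(A)=\{\mu_i-k^2:\ i=1,2,\ k\ge 1\}$ so that $s(A)=\max_i\zreal(\mu_i)-1$. Where you genuinely diverge is in the spectral computation. The paper fixes an eigenvalue $\lambda$ and an eigenfunction $u$, conjugates $A_0$ to Jordan or diagonal form via $P^{-1}A_0P=J$, and solves the resulting scalar ODE system case by case --- case (i) a single eigenvalue $\mu$ with a nontrivial Jordan block (the diagonalizable repeated-eigenvalue case being excluded because it forces ${\rm rank}[A_0|B_0]=1$, contradicting controllability), case (ii) distinct eigenvalues $\mu_1\neq\mu_2$. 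You instead decompose $H$ orthogonally into the $A$-invariant two-dimensional subspaces $V_k={\rm span}\{\sin(kx)\,v:v\in\C^2\}$, observe that $A$ restricted to $V_k$ is the matrix $-k^2I+A_0$, and assemble $\sigma(A)=\bigcup_k\sigma(-k^2I+A_0)$ via the uniform resolvent bound: since for a $2\times 2$ matrix $\|(\lambda-A_k)^{-1}\|\le \|{\rm adj}(\lambda-A_k)\|/|\det(\lambda-A_k)|$, the adjugate grows like $|\lambda|+k^2$ while the determinant grows like $k^4$ at fixed $\lambda$ off the (closed, discrete) union, so no extra spectrum can appear. This correctly disposes of the delicate point you flag, including the non-diagonalizable case, \emph{without} any case distinction on the Jordan structure of $A_0$ --- a genuine simplification over the paper. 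What the paper's longer route buys is explicit spectral data: the eigenfunctions ${\rm col}(e_k,0)$, the orthonormal eigenbasis \eqref{eq:baseAUTOVcasoDIAG}, and the length-two Jordan chains, which are not needed for Theorem~\ref{teoControllCARA} itself but are reused in Section~\ref{sect:ExplEstimNCVE} to set up the moment problem \eqref{eq:probleMOMENT} and derive the explicit decay rate $Z_N(y_0)\asymp {\rm const}/\sqrt{N}$; your argument, being purely resolvent-theoretic, yields the spectrum but not this basis information.
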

\begin{proof} By combining    Theorems~\ref{primo}
 and~\ref{secondo} (see also
Corollary~\ref{coro:rema:nounif}) we see that
  \eqref{cara} is NCVE if and only if
\begin{align*}
    \sigma (A) \subset \{ \zreal\zl \leq 0  \}.
\end{align*}
Thus \eqref{se4} follows easily if we show that
\begin{align}\label{spe5}
    \sigma (A) = \{ \lambda \in \C \; : \; \lambda =
    \mu_i  -k^2, \;\;\; i =1,2,\;\; k \in  \mathbb{N},\; k \ge 1
    \}.
\end{align}
To check \eqref{spe5} we first recall
 that   $\sigma (A)$ consists entirely of
  eigenvalues (see the proof of Proposition \ref{c}).

 Moreover, we will use that  $A_D = D_{xx}$ with Dirichlet boundary condition
 is self-adjoint on $L^2(0,\pi )$ with $\sigma
 (A_D) = \{ -k^2  \}_{k \ge 1}$ and
 \begin{align} \label{lap2}
  A_D e_k = -k^2  e_k,\;\;\; e_k(x) =  \frac{\sqrt{2}}{\sqrt{\pi}}
  \sin(k  x),\;\; x \in [0,\pi ].
 \end{align}
 In order to characterize $\sigma(A)$, we
 fix an
 eigenvalue $\lambda \in \C$ and consider a corresponding
eigenfunction $u \in \text{dom}\,A $,
 i.e.,
\begin{align}\label{eig}
u_{xx} + A_0 u = \lambda u,\;\; \; u \not = 0.
\end{align}

We note that if $ A_0 $ is diagonalizable with only one (repeated)
eigenvalue, then rank$[A_0| B_0]=1$. So, controllability implies
that we have to examine only the following two cases:

\vskip 1mm  \noindent (i) $A_0$ has a unique (real) eigenvalue $\mu = \mu_1 = \mu_2$ with dim(Ker$(A_0-\mu))=1$.

We introduce a non-singular $2\times 2$ real matrix $P$ such that
$$
P^{-1} A_0 P = J =  \left(\begin{array}{cc}
\mu  & 1 \\
0 & \mu
\end{array}
\right)
$$
and consider the real function $v = P^{-1}u \in \text{dom}\,A$. We find
 $Pv_{xx} + A_0 Pv  = \lambda Pv,$ and so we can concentrate on the problem
$$
v_{xx} + J v = \lambda v.
$$
If $v ={\rm col}\, (v^{(1)}, v^{(2)}) $ we find
$$
\begin{cases}
v_{xx}^{(1)} + \mu v^{(1)} + v^{(2)} = \lambda v^{(1)}.
\\
v_{xx}^{(2)} + \mu v^{(2)} = \lambda v^{(2)}.
\end{cases}
$$
 Using \eqref{lap2},  we deduce  that
 $\lambda - \mu = - k^2 $ for some $k \ge 1.$ Moreover $u = Pv$
where $v = {\rm col}\,( e_k,0) $ is an eigenfunction corresponding
to $\lambda =  \mu - k^2 $.

The eigenvalue $ \zl=\mu-k^2 $ has a Jordan chain of length $ 2 $
and the generalized eigenvalue has both the components equal to $
e_k $. Hence, the Jordan chain of $ \zl_k=\mu-k^2 $ has the
following elements (with
 $c = \frac{\sqrt{2}}{\sqrt{\pi}}$):
\[
\left (
\begin{array}
{cc} c \sin (k x)\\
0
\end{array}
\right )\,,\qquad
\left (
\begin{array}
{cc}c \sin (k x)\\
c \sin (k x)
\end{array}
\right )
 \]
and so the Jordan chains span the state space, in spite of the fact that the operator is not selfadjoint.

  \vskip 1mm  \noindent (ii)  $A_0$ has distinct eigenvalues $\mu_1$ and  $ \mu_2$, which might be complex (conjugate).

We consider a non-singular $2\times 2$   matrix $P$ (possibly complex) such that
$$
P^{-1} A_0 P = J =  \left(\begin{array}{cc}
\mu_1  & 0 \\
0 &   \mu_2
\end{array}
\right).
$$
Introducing  the complex function $v = P^{-1}u$, we find $ v_{xx} + J v  = \lambda v. $
If  $v = {\rm  col}\, ( v^{(1)},v^{(2)} )
$
  we get
$$
\begin{cases}
 v_{xx}^{(1)} + \mu_1 v^{(1)}  = \lambda v^{(1)}
\\
v_{xx}^{(2)} +   \mu_2 v^{(2)} = \lambda v^{(2)}\,.
\end{cases}
$$
 Using \eqref{lap2},  we deduce  that, for  $k, n \ge 1,$
$$
\text{either} \;\; \lambda - \mu_1 = - k^2 \;\; \text{or} \;\;
\lambda -  \mu_2 = - n^2.
$$
Moreover, if $\lambda =  \mu  - k^2 $, then
 an eigenfunction is  $u = Pv$ where
$v ={\rm  col}\, ( e_k,0) $. If $\lambda =    \mu_2  - n^2 $, then
 an eigenfunction is  $u = Pv$ where
$v = {\rm col} ( 0, e_n) $ (the functions $ e_n $ are defined
in~(\ref{lap2})). Hence, there is an  orthonormal basis of
eigenvectors of the state space in this case, whose elements are
\begin{equation}
\ZLA{eq:baseAUTOVcasoDIAG}
\left (
\begin{array}
{cc}c \sin (n x)\\
0
\end{array}
\right )\,,\qquad
\left (
\begin{array}
{cc} 0\\
c \sin (k x)
\end{array}
\right )
\end{equation}
 where $ n \in {\mathbb N}$ and $ k \in {\mathbb N}$ are independent.

The proof is complete.
  \end{proof}

\subsection{\ZLA{sect:ExplEstimNCVE}An explicit
 computation of the control energy }

Theorem~\ref{teoControllCARA} provides necessary and sufficient
conditions under which system \eqref{cara} is NCVE. Recall that if
 NCVE holds then, for every $ \ZEP>0 $, there exist controls steering
to the rest any initial condition and whose norm is less then $ \ZEP
$. Thus  if the system is  NCVE
 it might  be of interest
  to estimate
 the control  energy at time $ T $ , i.e.
\[
Z_T(y_0)
 =  \inf    \int_0^{T}
  | u(s) | ^2  ds
 \]
(the infimum is computed on the controls steering $ y_0 \in H$ to
the rest in time $ T >0$) and show directly that this converges to 0
as $T \to + \infty$.  For an example of such computations in the
case of the wave equation with boundary controls, see~\cite{ivanov};
see also  \cite{PWX}  for a related time optimal control problem  in
the {distributed} control case.
 We cite also ~\cite[Sect.~4.3]{ZUAZUA} for a general discussion on parabolic systems.

 \emph{We are going to show that explicit estimates on the control energy  are indeed
possible in the case of the example~(\ref{cara}), of course at the expenses of
 some more computations. }

 We confine ourselves to consider the case that $ A_0 $ is
 diagonalizable and $s(A) \le 0$. Moreover,
   we put ourselves in the critical case that $ 0 $
is an eigenvalue, so that, after a coordinate transformation,
\[
A_0=\left[\begin{array}
{cc} 1&0\\
0 &\mu
\end{array}\right]
 \]
 with $ \mu\leq 1$  and both $ \mu \not = 1 $ and
 $ \mu\neq 1-(j^2-k^2)  $
for every $ j $ and $ k \in \mathbb{N}$, in order to have
controllability. Furthermore, it is not restrictive that we assume
\[
B_0=\left (
\begin{array}{cc}
1\\ \beta
\end{array}
\right )
 \]
($ \beta\neq 0 $ is required by controllability).  Let, for $ n\geq 1 $,
\[
y(x,t)=\left (
\begin{array}{l}
v(x,t)\\w(x,t)
\end{array}
\right )\,,\qquad\left\{\begin{array}{l} v_n(t)=
 \frac{\sqrt{2}}{\sqrt{\pi}}
\int_0^\pi v(x,t) \sin nx\ZD x\,,\\  w_n(t)=
\frac{\sqrt{2}}{\sqrt{\pi}}\int_0^\pi w(x,t)\sin nx\ZD x\,.
\end{array}\right.
 \]
 An integration by
parts shows that $ v_n(t) $ and $ w_n(t) $ solve the following
equations
\begin{equation}
\ZLA{eq:DiVnWn}\left\{\begin{array}{lll}
 v_n'(t)&=& (1-n^2)v_n(t)+n u(t), \\
 w_k'(t)&=& (\mu-k^2)w_k(t) +k\beta u(t).
 \end{array}\right.
\end{equation}
Note that $ n $ and $ k $ here are independent, since an orthonormal basis of the state space is~(\ref{eq:baseAUTOVcasoDIAG}). So, the control $ u(t) $ steers    $ v(x,0)=v_0(x) $ and $ w(x,0)=w_0(x) $ to the rest in time $ T $ when $ f(t)=u(T-t) $ solves the moment problem (with $ n, \, k \geq 1 $)
\begin{equation}\ZLA{eq:probleMOMENT}
\left\{\begin{array}{lll}
 \intT e^{-(n^2-1)s }f(s)\ZD s&=&\left
 [\frac{1}{n}e^{-(n^2-1)T}\right ] v_{0,n} \,,
\\
\intT e^{-(k^2-\mu)s }f(s)\ZD s&=&\left [\frac{1}{\beta
k}e^{-(k^2-\mu)T} \right ]w_{0,k} \,.
\end{array}\right.
 \end{equation}
 Here $ v_{0,n} $ and $ w_{0,k} $,  $n, k \ge 1$, are the Fourier
coefficients in the sine expansion of the initial conditions $
v_0(x) $ and $ w_0(x) $.

 Let us define the following sequence $ \{\Phi_n(t)\} _{n\geq 1} $:
 \[
 \Phi _{2r-1}(t)=e^{-(r^2-1)t}\,,\qquad
 \Phi _{2r }(t)=e^{-(r^2-\mu)t}
  \]
 for every natural number $ r \ge 1$.  Set also
$$
\lambda_{2r-1}= r^2 -1\,,\qquad \lambda_{2r}= r^2 -\mu
$$
 i.e., for $ n\geq 1 $,
\begin{equation}\ZLA{eq:gliESPONENti}
\lambda_n=\left\{
\begin{array}{ll}
\frac{1}{4}(n+1)^2-1=\frac{1}{4}n^2+\left (\frac{1}{2}n-\frac{3}{4}\right ), &\mbox{$ n $ odd,}\\
\frac{1}{4}n^2-\mu,&\mbox{$ n $ even.}
\end{array}
\right.
\end{equation}
So, $ \Phi_1(t)=1 $ corresponds to the eigenvalue $ \zl_1=0 $ of $A$. Properties of this sequence have been studied in~\cite{Schw} on the space $
L^2(0,+\infty) $ and in $ L^2(0,T) $ (the fact that $ \Phi_1 $ is not square integrable is easily adjusted, see also below, in the proof of Lemma~\ref{Lemma:sullaNORMAbiortog}).
 It is proved   that the sequence $ \{\Phi_n(t)\} $ has a biorthogonal sequences in $ L^2(0,T) $.    Furthermore, from~\cite{AVDivan},  $ f(t) $ is given by
\begin{eqnarray}
\nonumber&&f(t)=\Psi_1^T(t)v_{0,1}+ h^T(t)\,,\\
\ZLA{eq:SOLUprobleMOMENT} &&h^T(t)=\sum_{r \ge 2}
\Psi_{2r-1}^T(t)\frac{1}{r}e^{-(r^2-1)T}v_{0,r}+
 \sum_{r \ge 1}
 \Psi_{2r }^T(t)\frac{1}{\beta r}e^{-(r^2-\mu)T}w_{0,r}\,.
 \end{eqnarray}
where $\{ \Psi_n^T\} _{n\geq 1} $ is any biorthogonal sequence such that the series converges in $ L^2(0,T) $. The existence of this sequence is consequence of the following lemma, proved at the end of this section:

 \begin{Lemma}\ZLA{Lemma:sullaNORMAbiortog}
 For every $ \ZEP>0 $ there exists a biorthogonal sequence $\{ \Psi_n^T\} _{n\geq 1} $ and a  number $K(\ZEP)$, independent of $ T $, such that
 \[
| \Psi_{n }^T|_{L^2(0,T)}<K(\ZEP) e^{\ZEP\zl_n}
  \]
 \end{Lemma}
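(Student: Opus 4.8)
The plan is to reduce the estimate to a single fixed interval, where existence of a biorthogonal family is already known, and then to turn a bound of the form $Ce^{C\sqrt{\zl_n}}$ into the required $e^{\ZEP\zl_n}$. First I record the arithmetic of the exponents. From~\eqref{eq:gliESPONENti} one has $\zl_n=\tfrac14 n^2+O(n)$, so that $\sqrt{\zl_n}\sim\tfrac12 n$, the gaps satisfy $\zl_{n+1}-\zl_n\to+\infty$, and $\sum_{n\ge1}1/\zl_n<+\infty$ (only $\zl_1=0$ is exceptional). These are precisely the density and separation properties under which $\{\Phi_n\}=\{e^{-\zl_n t}\}$ is minimal and admits a biorthogonal family in $L^2(0,T)$, which is the fact already quoted from~\cite{Schw,AVDivan}.

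The observation that makes $K(\ZEP)$ independent of $T$ is a monotonicity in the length of the interval. Fix once and for all some $T_0>0$ and let $\{\Psi_n^{T_0}\}$ be biorthogonal to $\{\Phi_n\}$ in $L^2(0,T_0)$. Extending each $\Psi_n^{T_0}$ by $0$ on $(T_0,T)$ yields a family that is still biorthogonal in $L^2(0,T)$ for every $T\ge T_0$, since
\[
\int_0^T\Psi_n^{T_0}(t)\Phi_m(t)\ZD t=\int_0^{T_0}\Psi_n^{T_0}(t)\Phi_m(t)\ZD t=\delta_{nm},
\]
while its $L^2(0,T)$ norm equals its $L^2(0,T_0)$ norm. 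Hence the same functions serve for every $T\ge T_0$, and it is enough to bound the biorthogonal family on the single interval $(0,T_0)$; on this finite interval the non-integrability of $\Phi_1\equiv1$ plays no role.

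It remains to estimate $\|\Psi_n^{T_0}\|_{L^2(0,T_0)}$, which is the genuine content. Here I would rely on the complex-analytic construction behind~\cite{Schw,AVDivan}: after the standard Fourier--Laplace identification the element $\Psi_n^{T_0}$ corresponds to an entire function of exponential type $\le T_0$ that equals $1$ at $\zl_n$ and vanishes at every $\zl_m$ with $m\ne n$ (a normalised product with zeros $\{\zl_m\}_{m\ne n}$: a canonical product on the finite interval, a Blaschke product in the half-plane picture). Its norm is controlled by the interpolation quantity $\prod_{m\ne n}\bigl|(\zl_n+\zl_m)/(\zl_n-\zl_m)\bigr|$, whose logarithm, for $\zl_n\sim n^2/4$, is $O(\sqrt{\zl_n})$; this produces a bound
\[
\|\Psi_n^{T_0}\|_{L^2(0,T_0)}\le C\,e^{C\sqrt{\zl_n}},\qquad C=C(T_0).
\]
When this estimate is extracted from the $L^2(0,+\infty)$ theory, the exceptional exponent $\zl_1=0$ is handled by the shift $\Phi_n\mapsto e^{-\delta t}\Phi_n$ with a small $\delta>0$, which makes all exponents positive and alters the final bound only by the harmless factor $e^{\ZEP\delta}$.

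Finally I dominate $\sqrt{\zl_n}$ by $\zl_n$: for every $s\ge0$ one has $Cs-\ZEP s^2\le C^2/(4\ZEP)$, so taking $s=\sqrt{\zl_n}$ gives
\[
C\sqrt{\zl_n}\le\ZEP\,\zl_n+\frac{C^2}{4\ZEP},\qquad\text{whence}\qquad \|\Psi_n^{T}\|_{L^2(0,T)}\le K(\ZEP)\,e^{\ZEP\zl_n}\quad(T\ge T_0),
\]
with $K(\ZEP)=C\,e^{C^2/(4\ZEP)}$ independent of $T$, as claimed. The main obstacle is the single-interval estimate $\|\Psi_n^{T_0}\|\le Ce^{C\sqrt{\zl_n}}$: this is the hard, function-theoretic step, and I would invoke the cited theory of sums of exponentials rather than reprove it, since the remaining ingredients---the zero-extension monotonicity in $T$ and the elementary domination---are soft.
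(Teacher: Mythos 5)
Your soft steps are correct, and one of them is actually neater than the paper's: a biorthogonal family on $(0,T_0)$ extended by zero is indeed biorthogonal on every $(0,T)$ with $T\geq T_0$ and unchanged norm, whereas the paper transfers in the opposite direction, from a biorthogonal family in $L^2(0,+\infty)$ down to $(0,T)$, using the bounded invertibility of the restriction map on the closed span of $\{e^{-\zl_n t}\}$ (Schwartz, with the constant $C(T)$ decreasing in $T$, hence $C(1)$ uniform for $T>1$). The shift $\Phi_n\mapsto e^{-\delta t}\Phi_n$ to handle $\zl_1=0$ matches the paper's device, and the domination $C\sqrt{\zl_n}\leq\ZEP\zl_n+C^2/(4\ZEP)$ is elementary and fine.

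The genuine gap is the single-interval estimate $\|\Psi_n^{T_0}\|_{L^2(0,T_0)}\leq Ce^{C\sqrt{\zl_n}}$, which you assume by citation, and your supporting arithmetic is false in precisely the way that matters: the gaps do \emph{not} tend to infinity. From~\eqref{eq:gliESPONENti}, $\zl_{2r}-\zl_{2r-1}=(r^2-\mu)-(r^2-1)=1-\mu$ for every $r$, a fixed constant (arbitrarily small as $\mu\to 1^-$); the sequence consists of two interleaved quadratic families with bounded mutual separation, so only every other gap grows. This condensation is exactly why the paper remarks that the exponents ``do not satisfy the more stringent conditions in~\cite{F-R}'': the Fattorini--Russell machinery is the standard source of the finite-interval $e^{C\sqrt{\zl_n}}$ bound, and it does not apply here, nor do Schwartz or Avdonin--Ivanov state such a bound for this sequence. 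The quantitative input actually available is \cite[Lemma~3.1]{CARA}, which yields only $K(\ZEP)e^{\ZEP\zl_n}$, and only on $L^2(0,+\infty)$ --- which is why the paper is forced into the half-line-to-$(0,T)$ transfer: your zero-extension reduction cannot start from the half-line family, since restricting a biorthogonal family on $(0,+\infty)$ to $(0,T)$ destroys biorthogonality (the tail integrals are nonzero). To salvage your route you would have to prove the finite-interval $e^{C\sqrt{\zl_n}}$ bound for the interleaved sequence yourself; this is plausibly true, since the uniform separation $1-\mu$ makes the partner Blaschke factor contribute only $O(\log\zl_n)$, but it is genuine function-theoretic work, not a citation, so as written the hard step of your proof rests on a reference that does not contain it and on a gap asymptotic that fails.
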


We assume this lemma and we prove convergence of the series  in~(\ref{eq:SOLUprobleMOMENT}). Furthermore we give an estimate for $ |h^T|_{L^2(0,T)} $.
  We prove convergence for $ T>2 $ since this is all that we need for the asymptotics of the energy. Convergence for $ T\in(0,2] $ is proved analogously.

  We consider the first series, which   can be treated as follows. We fix $ \ZEP=1/2 $ and we denote $ K=K(1/2) $. Then we have:
 \begin{eqnarray*}
&& \left |
\sum_{r \ge 2}
\Psi_{2r-1}^T(t)\frac{1}{r}e^{-(r^2-1)T}v_{0,r}
 \right | _{L^2(0,T)}
 \leq K \sum_{r \ge 2} \frac{1}{r} e^{-(r^2-1)(T-1/2)}|v_{0,r}|\\
 &&\leq Ke^{-(1/2)(T-1/2)}\left[
  \sum_{r \ge 2} \frac{1}{r} e^{-(r^2-3/2)(T-1/2)}|v_{0,r}|
 \right] \\
 &&
  \leq Ke^{-(1/2)(T-1/2)}\left[
  \sum_{r \ge 2} \frac{1}{r} e^{-(r^2- 3/2) }|v_{0,r}|
 \right]
 \end{eqnarray*}
 (since $ T>2 $).
 These inequalities
 prove in one shot convergence of the series and furthermore they prove that
 \[
  \left |
\sum_{r \ge 2}
\Psi_{2r-1}^T(t)\frac{1}{r}e^{-(r^2-1)T}v_{0,r}
 \right | _{L^2(0,T)}\to 0
  \]
 exponentially fast for $ T\to+\infty $.

 The second series can be treated analogously.

Hence, \emph{if $ v_{0,1} =0$ (or if $ \zl=0 $ is not an
eigenvalue) then   $ Z_ T(y_0) $  decays exponentially for $
T\to+\infty $.}

 We consider now $ Z_ T(y_0) $ when $ v_{0,1} \neq 0$. We keep the same elements $ \Psi_n^T $ as above for $ n>1 $, so that  the exponential estimate on the series is not affected, and we choose a suitable element $ \Psi_1 $.

 Note that we can confine ourselves to give an estimate for
$ Z_ T(y_0) $ when $ T=N$, a positive integer.

Let $ \{\Psi_n^1(t)\} $ be the biorthogonal sequence when $ T=1 $
and let us consider its first element $ \Psi_1^1(t) $. It satisfies
\[
\int_0^1 \Psi_1^1(t)\Phi_n(t)\ZD t=\left\{\begin{array}
{lll}
1 &{\rm if}&n=1\\
0&& {\rm otherwise}\,.
\end{array}
\right.
 \]
Let now $ \gamma $ be any positive number. For every $ \Psi
\in L^2(0,N)$  we have
\begin{equation}\ZLA{eq:laestINTperiteraz}
\int_0^N \Psi (t) e^{-\gamma t}\ZD t=\sum _{m=0} ^{N-1}e^{-\gamma
m}\int _0^1 \Psi(m+t)e^{-\gamma t}\ZD t\,.
 \end{equation}
Let us define $ \Psi_1^N(t) $ on $ [0,N] $\,,
\[
\Psi_1^N(t)= \Psi _{1}^1(t-m)\qquad \mbox{if $ m\leq t<m+1 $}, \qquad
0 \le m \le N-1 \,.
 \]
Then,~(\ref{eq:laestINTperiteraz}) shows that  $ \Psi_1^N(t) $ is
orthogonal to   $ \Phi_n(t) $ on $ [0,N] $, for every $ n>1 $, and
\[
\|\Psi_1^N\| _{L^2(0,N)}= {\sqrt N} \|\Psi _ 1^1\|_{L^2(0,1)}  \,.
 \]
 Now we consider the control function $ f(t) $ on $ [0,N] $ given by
\begin{eqnarray*}
&& f(t)= \alpha \Psi_1^N(t)+ h^N(t)\,,\\
&& h^N(t)=\sum_{r \ge 2}
\Psi_{2r-1}^N(t)\frac{1}{r}e^{-(r^2-1)N}v_{0,r}+
 \sum_{r \ge 1}  \Psi_{2r }^N(t)\frac{1}{\beta r}e^{-(r^2-\mu)N}w_{0,r}
 \,,
\end{eqnarray*}
for a suitable constant $\alpha $ to be fixed.

Note that $ h^N(t) $ is the same as in~(\ref{eq:SOLUprobleMOMENT}),
with $ T=N $. Thanks to the fact that $\Psi_1  ^N$ is orthogonal to
every $ \Phi_k(t) $, $ k>1 $, we see that this function $ f(t) $
solves the moment problem~(\ref{eq:probleMOMENT}) (with $ T=N $) if
\[
v_{0,1}-\int_0^N h^N(s)\ZD s=\alpha \int_0^N \Psi_1^N(t)\ZD t=N\alpha
\int_0^1 \Psi_1^1(t)\ZD t=N\alpha\,.
 \]
 We have seen that the $ L^2(0,N) $-norms of $ h^N(t) $ decay exponentially:
 $
 \| h^N\| _{L^2(0,N)}\leq  e^{-\sigma N}
  $ ($ \ZSI>0 $).
 This shows that $ \alpha  \asymp 1/N$ and we have
  \[
  \| \alpha \Psi_1^N\| _{L^2(0,N)}\asymp \frac{{\rm const}}{\sqrt N}\qquad {i.e.}\qquad  Z_N (y_0)\asymp \frac{{\rm const}}{\sqrt N}\,.
   \]
  This is the required estimate for the energy at time $ T $. This estimate implies, in a less direct way, that  the system is   NCVE.


\begin{proof}[ Proof of Lemma~\ref{Lemma:sullaNORMAbiortog} ]
The proof consists on finding a relation between the norm of biorthogonals  in $ L^2(0,T) $ and in $ L^2(0,+\infty ) $. But, we note that $ \Phi_1(t)=1 $ is not square integrable. This is adjusted replacing $ \Phi_n(t) $ with $ e^{-t}\Phi_n(t) $, $ \zl_n $  in~(\ref{eq:gliESPONENti})  with $ \zl_n + 1 $    and then $ f(t) $ with $ e^{t} f(t) $. Once this has been understood, we go on using the notations $ \zl_n $ and $ \Phi_n(t) $ for these (modified) sequences.

The sequence of the exponents $ \lambda_n $ (now all positive) satisfies the conditions in~\cite[Lemma~3.1]{CARA}  (it does not satisfy the more stringent conditions in~\cite{F-R}).
Hence (see~\cite[Lemma~3.1]{CARA}), it admits a biorthogonal sequence $ \{\Psi_n(t)\} $ in $ L^2(0,+\infty) $ with the following property: for every $ \ZEP>0 $ there is a constant $ K(\ZEP) $ such that
\[
|\Psi_n(t)|_{L^2(0,+\infty)}\leq K(\ZEP)e^{\ZEP\zl_n}\,.
 \]
  The sequence $ \zl_n $ in particular satisfies
\[
\sum_{n \ge 1}\frac{1}{\zl_n}<+\infty
 \]
 so that $ \{ e^{-\zl_n t}\} $ spans a closed \emph{proper} subspace of $ L^2(0,+\infty ) $.
Let $ E(\infty) $
and $ E(T) $ be   the closed linear spans of the set $ \{ e^{-\zl_n t}\} $ respectively in $ L^{2}(0,+\infty) $ and $ L^{2}(0,T) $.
Let $ P_T $
 be the linear operator which assigns to any element of $ E(\infty) $ its restriction to $ (0,T) $ (which is an element of $ E(T) $).
 Clearly, $ P_T $ is linear and continuous (norm equal $ 1 $) and, from~\cite[p.~55]{Schw}, it is boundedly invertible,
 \[
 \| P_T^{-1}\|\leq M_T
  \]
(the inverse is defined on $ E(T) $).
  It
follows from~\cite[p.~55]{Schw}  that   for arbitrary positive $T$ there exists a constant $C(T)$  such that for
arbitrary real numbers $a_n$ and arbitrary natural number $N$
\begin{equation}\label{embedding}
\left \|\sum_{n=1}^N a_n e^{-\lambda_n t}  \right \|_{L^{2}(0,+\infty)}\leq C(T) \left \|\sum_{n=1}^N a_n e^{-\lambda_n t}\right \|_{L^{2}(0,T)}.
\end{equation}
Of course, $ T\to C(T) $ is decreasing so that for $ T>1 $ we have
\[
\left \|\sum_{n=1}^N a_n e^{-\lambda_n t}\right \|_{L^{2}(0,T)}\leq
\left \|\sum_{n=1}^N a_n e^{-\lambda_n t}\right \|_{L^{2}(0,+\infty)}\leq C(1) \left \|\sum_{n=1}^N a_n e^{-\lambda_n t}\right \|_{L^{2}(0,T)}.
 \]
Hence we have
\[
\| P_T\|\leq 1\,,\qquad \|P^{-1}_T\|\leq C(1)\,.
 \]
 This implies, for $ T>1 $:
 \[
 |\Psi_n| _{L^2(0,T)}\leq C(1) |\Psi_n| _{L^2(0,+\infty)}\leq C(1) K(\ZEP) e^{\ZEP\zl_n}
  \]
as wanted.
\end{proof}

Finally, we cite~\cite{TENENBAUM-TUCSNAK} for different conditions on sequences of exponentials, which lead to (delicate) estimates on the solution of the corresponding moment problem.

 \subsection{NCVE for delay systems}

Now we  discuss a controlled delay system   also considered in \cite{PZ}. In this case a direct computation of the control energy  at time $ T $  as it is done in the previous example seems to be difficult (see the explanation below).  On the other hand, it is possible to apply
Corollary~\ref{coro:rema:nounif} to deduce that the system is NCVE.

Let us consider a retarded system with state delays,
\begin{equation}\ZLA{eq:esedelaygener}
\dot x=\sum_{k=0}^M A_k x(t- k\tau) +B u(t)
 \end{equation}
 where $ \tau>0 $,  $ x\in\mathbb{ R}^n $, $ A_i $ are $ n\times n $ constant matrix  and $ u\in {\mathbb R}^m $ (so that the constant matrix $ B $ is $ n\times m $). We introduce $ H=M\tau $.

 Eq.~(\ref{eq:esedelaygener}) is a model of a semigroup system in $ M^2={\mathbb  R}^n\times L^2(-H,0;{\mathbb R}^n) $, the state of the system being the couple $ (x(t),x(t-s) )$, with $ s\in [-H,0] $. See~\cite{BD1} for details.

 It turns out that:
  \begin{itemize}
  \item the semigroup is compact for $ t>H $, so that we are in the framework of Corollary~\ref{coro:rema:nounif}.
  \item the spectrum of the generator is not empty (it might be finite in special cases) and  (see~\cite{HALE}) its elements are the zeros of the holomorphic function
 \begin{equation}\ZLA{eq:degliautovRITARDO}
  \det\left [
\zl I-\sum_{k=0}^M A_k e^{-\zl  k\tau}
  \right ]
  \end{equation}
   ($ I $ is the $ n\times n $ identity matrix);
   \item the system is null controllable if and only if
 \begin{equation}\ZLA{eq:CondiCONTROLLRItardo}
   {\rm rank}\, \left [
   \begin{array}{cc}
   \zl I-\sum_{k=0}^M A_k e^{-\zl  k\tau}  &B
   \end{array}\right]=n
   \end{equation}
    for every $ \zl $ (see~\cite{OlbrPAND}).
  \end{itemize}
 So, we can state that this system is NCVE when condition~(\ref{eq:CondiCONTROLLRItardo}) holds and   the holomorphic function in~(\ref{eq:degliautovRITARDO}) has no zero with \emph{positive} real part.

Controllability can often be easily checked while conditions for nonpositivity of the real parts of eigenvalues have been widely studied (see for example~\cite{BellCOOK}).

Null controllability can be reduced to a moment problem of course, but arguments as those in
Section~\ref{sect:ExplEstimNCVE} for the computation of the control energy  seems difficult to apply, since now the eigenvalues are distributed on a (finite number) of sequences, each one of which has the following asymptotics:
 \[
 \zl_n=x_n+iy_n\,,\qquad x_n=m\big(\alpha -\log (\beta m) \big)+{\rm o}(1)\,,\quad y=\beta m+{\rm o}(1)
  \]
 ($\alpha $ and $\beta$  are suitable constants, see~\cite[Theorem~12.8]{BellCOOK}).

 The sequence $ \{-\zl_n\} $ does not satisfy neither the conditions in~\cite[Lemma~3.1]{F-R} neither those in~\cite[Lemma~3.1]{CARA}. Furthermore, since the retarded systems have smoothing property,  the sequence of the exponentials $ \{e^{-\zl_n t}\}$   is not a Riesz sequence  in $ L^2(0,T)$. So, it seems difficult in this case to compute explicitly the energy of the control at time $ T $.

 As a simple specific example we consider
 \[
 \dot x=-ax(t)- b x(t-\tau)+u(t)
  \]
  (which is clearly null controllable). Using~\cite[page 135]{HALE} we see that this system is NCVE for every value of $ r $ if and only if  $|b|\leq a$.
   For a specific $ r>0 $ the set of the $ (a,b) $
  plane in which NCVE holds is  represented in~\cite[Fig. 5.1]{HALE}.

\appendix

\section{\label{AppBASICsetting} The basic setting  for boundary control}

Here we present known facts about boundary control   which are
explained in~\cite[Sections~2.9 and 2.10]{TW}. Useful references are
~\cite[Chapter~2]{gold}, ~\cite[Chapter 3]{BD1} and~\cite[Chapter
1]{LT1}. However, it seems to us that detailed arguments are not completely  presented in standard control references and so we write  this appendix
 for the reader's convenience.

 A warning is needed: terms and some settings change  in
different books. For example~\cite{Kato} uses the same term,
adjoint, for Banach space and Hilbert space adjoints, while it is
convenient for us to use different terms. More important, the dual
spaces and the Banach space adjoints are defined in terms either of
linear forms or sesquilinear forms. The use of sesquilinear forms as
in~\cite{Kato,TW} is the most convenient for us.

We must introduce  few notations. As before, $
\langle\cdot,\cdot\rangle $ will be used to denote the inner product
in  Hilbert spaces (if needed, the spaces are specified with an
index; no index is present for the inner product in $ H $).

\ If $ V $ is a complex Banach space (possibly Hilbert),
$ V' $ denotes its topological dual (the Banach space of the
continuous {\em linear \/} functionals defined on $ V $).
Thus, if
$ \omega\in V' $ we can compute $ \omega(v) $ for every $ v\in V $.
We shall use the notation
\[
 {~}_{V'}(\omega,v)_{V}
 \]
 in order to denote the \emph{ sesquilinear}
  pairing of $ V $ and $ V' $, i.e.,
 $ {~}_{V'}(\omega,v)_{V}  $ is antilinear in $ \omega $ and linear in
 $ v $.

  To give an example, we note that the concrete  spaces encountered in
control theory are complexification of spaces of real functions;
i.e., if $ V_R $ is a linear space over $\mathbb R$,  the elements
of the corresponding complexified space $ V $ have the form
\[
v=f+ig\,,\qquad f\,,\ g\in V_R\,.
 \]
 The space $ V $ is a linear space on
$ \mathbb C $ and  it is simple to construct  sesquilinear forms on
$ V $, using  elements of $ V '$: let $ \omega$  a complex valued
linear functional on $ V $, i.e., $ \omega\in V' $.   The associated
sesquilinear form  is
\[
_{V'}(\omega,v)_V=  _{V'}(\omega,f+ig)_V=\overline{\omega(f-ig)}\,.
 \]

 We shall use both the Hilbert space adjoint and the dual of
an operator in the sense of Banach spaces. The Hilbert space adjoint
is defined for densely defined operators $ A $ by
\[
\langle Ax,y\rangle=\langle x,A^*y\rangle \qquad \forall x\in {\rm dom}\,A\,,\quad \forall y\in {\rm dom}\,A^*
 \]
(this equality implicitly defines $ {\rm dom}\,A^* $ as the set of those $ y\in H $ such that
$ x\to \langle Ax,y\rangle $ is continuous).

This implies in particular that
\[
\rho(A^*)=\overline{\rho(A)}\,.
 \]

The operator $ A^*  $ is closed and if  $ A $ is (densely defined)
closed   then $  A^* $ has dense domain too.

The Banach  space dual  of an operator   $ A$: ${\rm dom}\, A
\subset V \to W$ (here $V $ and $W$ are Banach spaces) will be
denoted $ A' $. It is a linear operator from $ W' $ to $ V' $. It is
(uniquely) defined for densely defined operators $ A $ and
\[
{\rm dom}\,A'=\{ \omega\in W'\,:\ \; v\mapsto {~}_{W'}(\omega,Av)_{W}\  \mbox{is continuous.}\ \}.
 \]
 By definition,
 \[
 {~}_{V'}(A'\omega,v)_{V}= {~}_{W'} (\omega,Av)_{W}\,.
  \]
 Sesquilinearity of the pairing implies that
  \[ \rho(A')=\overline{\rho(A) }\]
(see~\cite[pg. 184]{Kato}). Hence, the conjugate of multiplication by
$ \lambda  $ is multiplication by $ \bar\lambda $ (if instead the
conjugate is defined in terms of bilinear forms then the resolvent
is not changed).
 Moreover,
 $ A' $ has dense
domain if $ A $ has dense domain and it is closed, provided that $ V
$ is reflexive, in particular if it is a Hilbert space.

If $ W=V $ and if $ A $ is the infinitesimal generator of a $ C_0 $
semigroup on $ V $ then it might be that $ A' $ is not a generator
on $ V' $. It happens that $ A' $ is the infinitesimal generator of
a $ C_0 $-semigroup on $ V' $ if $ V $ is  reflexive, in particular
if it is a Hilbert space. In this case $ e^{A' t}=\left (e^{A
t}\right) ' $.
    As for the Hilbert space adjoint $A^*$, it generates $ e^{A^* t} $ (see~
\cite[Section 1.10]{Pa}).

  With these notations and preliminary information, we can now give the details of the setting used in the analysis of boundary control systems.

\subsection{The operators $ A $ and
$ \A=(  A^*)^{'}$}

  Let $A$ be the generator of a strongly continuous
semigroup $e^{At}$ on a complex Hilbert space $H$ with inner product
$\langle \cdot , \cdot\rangle $ and norm $|\cdot |$.

We shall identify its topological dual $ H' $ with $ H $ using the Riesz isomorphism, which we denote $ R $:   $  H \to H'$, defined as:
\[
 (Rv)(h)={~}_{H'}(Rv,h)_{H} = \langle h, v\rangle.
 \]
In practice, $ R $ is not explicitly written, hidden behind the
equality $ H=H' $ but in this appendix the distinction is needed for
clarity.

Using the Riesz map $ R$: $ H\mapsto H' $ and the definition of $ A' $,
we see that $ {\rm dom}\,A'=R\left ({\rm dom}\,A^*\right ) $. In fact
\[
_{H'}(  Rh,Ak  )_{H}=\langle Ak,h\rangle
 \]
and the right hand side is a continuous function of $ k $ if and only if the same holds for the left side.

For every $ h\in {\rm dom}\,A^* $   and every
 $ k \in {\rm dom}\, A $ we have:
\[
{~}_{H'}\left (  A'Rh ,k
\right )_{H}
 = {~}_{H'}\left ( R\left (R^{-1}A'Rh\right ),k
\right )_{H}=\langle k,R^{-1}A'Rh\rangle\,.
 \]
The definition of $ A' $ is
\[
{~}_{H'}\left (  A'Rh ,k
\right )_{H}={~}_{H'}\left (   Rh ,Ak
\right )_{H}=\langle k,A^*h\rangle\,.
 \]
 Hence (see~\cite[Sect.~II.7]{gold}) we have
\begin{equation}
\label{relaAeAPRIMO}
 A^*=R^{-1}A'R\,.
\end{equation}
  The same relation holds for the semigroups
  \[
  e^{A^*t}=R^{-1}e^{A' t}R\,.
   \]

  In the sequel we denote by $V$  the Hilbert space
${\rm dom}\,A^*$, with inner product $\langle h, v\rangle_{*}$ $ = \langle h, v\rangle $
$+ \langle A^*h, A^*v\rangle $, $h,v \in \text{dom}\,A^*.$ We have
\begin{align} \label{vv}
{\rm dom}\,A^*= V \subset H \stackrel{R}{\simeq} H' \subset V'
\end{align}
with dense and continuous injections.

Let $ {j \,}  $ be the injection of $ V $ into $ H $, $ {j \,}v=v\in
H $, for $v \in V$. Then, the definition of $ {j \,}' : H' \to V'$
is
\[
\langle {j \,}v,h\rangle={~}_{H'}(Rh,{j \,}v)_{H}={~}_{V'}({j
\,}'Rh, v)_{V}
 \]
and this shows  that $ {j \,}' Rh$ is the restriction of $ Rh $
(acting on $ H $) to the subspace $ V \subset H$.

As $A^* \in {\mathcal L}(V,
H)$ we have
$(A^*)' : H'   \to V'$ belongs to ${\mathcal L}(H', V')$.

We denote $(A^*)'$ by $\A$, so that
 $\text{dom}\,\A =H'$ (or, as usually written when $ H $ and $ H' $ are identified,
  $\text{dom}\,\A =H$).
  The crucial property used in control theory is
  expressed by stating that
 {\it $\A$ extends $A$.\/} The precise statement is:
 \begin{Lemma}
If $ x\in {\rm dom} \, A $ then we have:
\[
Ax=R^{-1}({j \,}')^{-1}\A \, Rx \,.
 \]
\end{Lemma}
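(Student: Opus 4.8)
The plan is to unwind the two defining sesquilinear pairings in play---that of $\A=(A^*)'$ and that of the Riesz map $R$---and then to recognize the resulting functional on $V$ as the one induced by the vector $Ax\in H$. Throughout I will test $\A Rx\in V'$ against an arbitrary $v\in V={\rm dom}\,A^*$, the point being to identify $\A Rx$ concretely.

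First I would fix $x\in{\rm dom}\,A\subset H$ and apply the definition of the Banach dual to $A^*\in{\mathcal L}(V,H)$, followed by the definition of $R$:
\[
{~}_{V'}(\A Rx,v)_{V}={~}_{H'}(Rx,A^*v)_{H}=\langle A^*v,x\rangle\,,\qquad v\in V\,.
\]
This is the step where the bookkeeping must be exact, since the pairing is antilinear in its first slot, matching $(Rx)(h)={~}_{H'}(Rx,h)_{H}=\langle h,x\rangle$. Next I would invoke $x\in{\rm dom}\,A$: the adjoint identity $\langle Ax,v\rangle=\langle x,A^*v\rangle$ (valid for every $v\in{\rm dom}\,A^*$) gives, upon conjugation, $\langle A^*v,x\rangle=\langle v,Ax\rangle$, whence
\[
{~}_{V'}(\A Rx,v)_{V}=\langle v,Ax\rangle\qquad\forall\,v\in V\,.
\]

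I would then compare this with the functional produced by the candidate vector $Ax$. Using that ${j \,}'R(Ax)$ is precisely the restriction of $R(Ax)$ to $V$ (recorded above) and that ${j \,}v=v$, I compute
\[
{~}_{V'}({j \,}'R(Ax),v)_{V}={~}_{H'}(R(Ax),{j \,}v)_{H}=\langle v,Ax\rangle\,.
\]
The two displays coincide for all $v\in V$, so ${j \,}'R(Ax)=\A Rx$ as elements of $V'$. Since $V$ is dense in $H$, the restriction map ${j \,}'\colon H'\to V'$ is injective, hence invertible onto its image; applying $({j \,}')^{-1}$ yields $({j \,}')^{-1}\A Rx=R(Ax)$, and one final application of $R^{-1}$ gives $R^{-1}({j \,}')^{-1}\A Rx=Ax$, as claimed.

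The one genuinely substantive point---as opposed to routine manipulation of pairings---is that the a priori merely $V'$-valued object $\A Rx$ in fact lies in the range of ${j \,}'$, i.e.\ extends to a continuous functional on all of $H$. This is exactly the assertion that $\A$ extends $A$ on ${\rm dom}\,A$, and the explicit identification $\A Rx={j \,}'R(Ax)$ is what establishes it; everything else is forced by the definitions of $R$, $A^*$, and ${j \,}'$.
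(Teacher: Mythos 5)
Your proof is correct and follows essentially the same route as the paper: the identical chain of pairings ${~}_{V'}(\A Rx,v)_{V}={~}_{H'}(Rx,A^*v)_{H}=\langle A^*v,x\rangle=\langle v,Ax\rangle={~}_{H'}(RAx,{j\,}v)_{H}={~}_{V'}({j\,}'RAx,v)_{V}$, yielding $\A R={j\,}'RA$ on ${\rm dom}\,A$. Your only addition is the explicit remark that ${j\,}'$ is injective because $V$ is dense in $H$, which the paper leaves implicit when it inverts ${j\,}'$ and $R$.
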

\begin{proof}
  Indeed, if $x \in \text{dom}\, A$, $v \in V =
\text{dom}\, A^*$, then
\begin{align*}
_{V'}( \A \,  Rx, v)_{V}= {~}_{V'}( (A^*)' \, Rx, v)_{V}
 = { _{H'}(} Rx, A^* v)_{H}
 = \langle A^*v, x\rangle  \\
= \langle v, Ax \rangle = { _{H'}(}R Ax, {j \,}v)_{H}={~}_{V'}({j
\,}'RAx,v)_{V}
\end{align*}
and so $ \A R={j \,}'RA $. When $ {j \,}' $ and $ R $ are not
explicitly written, as usual, we get $\A x = A x $.
\end{proof}

The second property that we want to prove is that $ V' $ is an
extrapolation space generated by $ A $. This means that we can see $
V' $ as the completion of $ H $ when endowed with the norm
$|(\lambda I - A)^{-1} \cdot |$, for any $ \lambda \in\rho(A) $. In
order to see this, we fix any $ \lambda \in \rho(A) $ and   we prove
that $ |\cdot| _{V'} $ restricted to  $ H $ is equivalent to
$|(\lambda I - A)^{-1}\cdot |$,
 i.e., we prove:
 \begin{Lemma}
On $ H $, the norms of $ V' $ (more precisely, $ h\mapsto |{j \,}'
Rh|_{V'} $) and the norm $ |(\lambda I-A)^{-1} \cdot | $ are
equivalent.
\end{Lemma}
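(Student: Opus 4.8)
The plan is to compute the $V'$-norm of $j'Rh$ by duality and then to recognize it, after a change of variables through the resolvent, as the $H$-norm of $(\lambda I-A)^{-1}h$. First I would unwind the pairing. Since $j'Rh$ is the restriction of $Rh$ to $V$, for every $v\in V$ one has ${}_{V'}(j'Rh,v)_V=(Rh)(jv)=\langle jv,h\rangle=\langle v,h\rangle$, because $jv=v$ in $H$. Hence $|j'Rh|_{V'}=\sup\{\,|\langle v,h\rangle|\;:\;v\in V,\ |v|_*\le 1\,\}$, where $|v|_*^2=|v|^2+|A^*v|^2$ is the graph norm of $V=\text{dom}\,A^*$.

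The key analytic input is that, since $\lambda\in\rho(A)$ forces $\bar\lambda\in\rho(A^*)$ (recall $\rho(A^*)=\overline{\rho(A)}$), the operator $\bar\lambda I-A^*:V\to H$ is a bounded bijection and the graph norm $|\cdot|_*$ is equivalent on $V$ to $|(\bar\lambda I-A^*)\cdot|_H$; the upper bound here is immediate and the lower bound follows from boundedness of $(\bar\lambda I-A^*)^{-1}$. Granting this, I would substitute $w=(\bar\lambda I-A^*)v$, i.e. $v=(\bar\lambda I-A^*)^{-1}w$, so that as $v$ ranges over $\{\,|(\bar\lambda I-A^*)v|\le 1\,\}$ the vector $w$ ranges over the closed unit ball of $H$. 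Using $A^{**}=A$ (valid since $A$, being a semigroup generator, is closed and densely defined) we get $(\bar\lambda I-A^*)^*=\lambda I-A$, hence $[(\bar\lambda I-A^*)^{-1}]^*=(\lambda I-A)^{-1}$, and therefore $\langle v,h\rangle=\langle(\bar\lambda I-A^*)^{-1}w,h\rangle=\langle w,(\lambda I-A)^{-1}h\rangle$.

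Taking the supremum over $|w|\le 1$ gives $\sup_{|w|\le 1}|\langle w,(\lambda I-A)^{-1}h\rangle|=|(\lambda I-A)^{-1}h|$, and combining with the norm equivalence from the previous step yields constants $c,C>0$ with $c\,|(\lambda I-A)^{-1}h|\le|j'Rh|_{V'}\le C\,|(\lambda I-A)^{-1}h|$, as claimed. The main obstacle I expect is purely the conjugation bookkeeping: one must keep straight which resolvent goes with which operator ($\bar\lambda$ with $A^*$, $\lambda$ with $A$), and verify that, under the sesquilinear pairing convention fixed in this appendix, the Hilbert-adjoint identity $[(\bar\lambda I-A^*)^{-1}]^*=(\lambda I-A)^{-1}$ comes out with no stray complex conjugate. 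The independence of the resulting equivalence class from the particular $\lambda\in\rho(A)$ is then automatic, since every admissible choice produces a norm equivalent to the restriction of $|\cdot|_{V'}$ to $H$.
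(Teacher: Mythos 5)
Your proof is correct, and its first half is the paper's own computation: both begin from $|j'Rh|_{V'}=\sup_{|v|_V\le 1}|\langle v,h\rangle|$, insert $(\bar\lambda I-A^*)^{-1}(\bar\lambda I-A^*)$, and move the resolvent across the pairing via $\bigl[(\bar\lambda I-A^*)^{-1}\bigr]^{*}=(\lambda I-A)^{-1}$ to get the upper bound $C\,|(\lambda I-A)^{-1}h|$ with $C=\sup_{|v|_V\le 1}|(\bar\lambda I-A^*)v|$. Where you genuinely differ is the reverse inequality. You observe that $\bar\lambda I-A^{*}\colon V\to H$ is a bijection (since $\bar\lambda\in\rho(A^{*})=\overline{\rho(A)}$) and that the graph norm satisfies $|v|_{*}\asymp|(\bar\lambda I-A^{*})v|$, so the substitution $w=(\bar\lambda I-A^{*})v$ turns the supremum into the exact identity $\sup_{|w|\le 1}|\langle w,(\lambda I-A)^{-1}h\rangle|=|(\lambda I-A)^{-1}h|$, from which both inequalities fall out of a single computation. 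The paper instead prints, for the reverse direction, the chain $|(\lambda I-A)^{-1}h|\le C_0|Rh|_{H'}=C_0|(j')^{-1}j'Rh|_{H'}\le C_1|j'Rh|_{V'}$, whose final step would require $(j')^{-1}$ to be bounded from $j'(H')\subset V'$ to $H'$, i.e. $|h|_H\le C_1'\,|j'Rh|_{V'}$; for unbounded $A$ this cannot hold (it would make the $V'$-norm equivalent to the $H$-norm on the dense subspace $H$, forcing $V'=H'$ and hence $A^{*}$ bounded, and it contradicts the very equivalence being proved, since $|(\lambda I-A)^{-1}\cdot|$ is strictly weaker than $|\cdot|_H$). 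So your exact-duality argument, exploiting surjectivity of $\bar\lambda I-A^{*}$ onto $H$, is not merely an alternative route: it supplies a correct proof of the direction that the paper's displayed chain garbles. The conjugation bookkeeping you flag is also in order: $A^{**}=A$ holds because a generator is closed and densely defined, and under the appendix's sesquilinear conventions $(\bar\lambda I-A^{*})^{*}=\lambda I-A$ indeed comes out with no stray conjugate.
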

\begin{proof}
Let $ I $ denote the identity in $ H $ and also on $ V $.

Let $ \lambda \in\rho(A)  $ and $h \in H$.
 In order to compute $ |{j \,}' Rh|_{V'} $, we proceed as follows (recall
 that  $ {j \,}' Rh$ is the restriction of $ Rh $
 to  $ V $).
\begin{eqnarray*}
&&|{j \,}' Rh|_{V'}=\sup _{|v|_V \le 1}  |{}_{V'}({j \,}' Rh  , v
)_V |= \sup _{|v|_V \le 1}|\langle v, h \rangle |
\\
&& =\sup_{|v|_V \le 1} |\langle  \left( \bar \lambda { I}-{
A^*}\right )^{-1} \left ( \bar \lambda { I}-{ A^*}\right )
 v , h \rangle|
\\
&&  =\sup_{|v|_V \le 1} |\langle  \left ( \bar \lambda { I}-{
A^*}\right )
 v , \left(  \lambda { I}-{
A}\right )^{-1}  h \rangle|
 \le C \left |\left
(\lambda I-A\right )^{-1}h\right |,
\end{eqnarray*}
 with $C = \sup_{|v|_V \le 1}  |(\bar \lambda
I-A^*)v|.$ On the other hand,
$$
\left |\left (\lambda I-A\right )^{-1}h\right |
 \le
 C_0 |Rh|_{H'} = C_0   |({j \,} ')^{-1}  {j \,} ' Rh|_{H'}
\le C_1  | {j \,} ' Rh|_{V'}.
$$
The proof is complete.
\end{proof}

\subsection {The parabolic case and
 fractional powers of
 $(\omega I- A^{*})$
 }

 Our goal here is   to show that if $A$ (and so $A^*$)
 {\it generates a holomorphic
 semigroup}  $e^{At}$ (respectively $e^{A^* t}$) and in addition
\begin{align} \label{dom2}
B \in {\mathcal L} \left(U , \left(\text {dom}\,\left(\omega -A^*\right)^{\gamma} \right)'\right),
\end{align}
for some $\gamma \in [0,1)$ and $\omega \in \rho(A)$,
 then we have  the crucial estimate \eqref{DiseSINGOLARE},
 i.e.,
$$
 \| e^{\A t} B \|_{{\mathcal L}(U,H)} \le \frac {Me^{\omega_1 t}} {t^{\gamma}}, \;\;\; t
>0.
$$
First  recall that, since $A$ is holomorphic,
there always exists
  $\omega \in \rho (A)$ such that, for any $\gamma
\in (0,1)$, $(\omega I - A^{*})^{\gamma}$ is a well defined
 closed operator with domain $V_{\gamma} \subset H$
 (cf.~\cite[Section 2.6]{Pa}).
Then   note that   $\A$ generates
  the dual semigroup $e^{\A t} $ which is holomorphic
   on $V'$.

Arguing as in \eqref{vv}, we have
$$
 V_{\gamma} \subset H \simeq H' \subset V'_{\gamma}
$$
and we denote by $\A_{\gamma}$ the operator $[(\omega -
A^{*})^{\gamma}]'$ $: H \to  V'_{\gamma}.$
 Since  $B \in {\mathcal L} (U,
 V'_{\gamma})$,
  we may consider $B: U \to V'$, since
 $ V'_{\gamma} \subset V'$ with dense and continuous
 injections. Thus  we also have
   $B \in {\mathcal L} (U, V')$.

Moreover, since $B  \in {\mathcal L} (U,
 V'_{\gamma})$, we have
  $B'   \in {\mathcal L} (V_{\gamma}, U')$
 and so, for $ t>0 $:
$$
 \| B' e^{A^*t} \|_{{\mathcal L}(H,U')}
  =  \| B' (\omega I - A^{*})^{-\gamma} \,
   (\omega I - A^{*})^{\gamma} e^{A^*t} \|_{{\mathcal L}(H,U')}
$$
\begin{align} \label{fit}\le   \| B' (\omega I - A^{*})^{-\gamma} \|_{{\mathcal L}(H,U')}
  \,  \|  (\omega I - A^{*})^{\gamma} e^{A^*t}  \|_{{\mathcal L}(H,H)}
 \le \frac{Me^{\omega_1 t}}{t^{\gamma}}, \;\; t>0
\end{align}
(in the last line we have used a well known estimate for holomorphic
semigroups). Next   {\it we  compute the dual operator of $B'
e^{A^*t}$
 and show that it is $ R^{-1} e^{{\mathcal A} t}B $,}
 usually
written as $ e^{{\mathcal A} t}B  $ when $ H $ and $ H' $ are
identified.

 We have, for any $x \in H$, $u \in U$, $t>0$,
 $$
  { _{U'}(} B' e^{A^* t} x, u)_{U}=
  { _{V_{\gamma}}(}  e^{A^* t} x, B u)_{ V_{\gamma}'}
 =  { _{V}(}  e^{A^* t} x, B u)_{ V'},
$$
since $Bu \in V_{\gamma}' \subset V'$ (recall that $V=
\text{dom}\,A^{*})$ and $e^{A^* t } x \in V$, $t>0$. It follows that
$$
{ _{U'}(} B' e^{A^* t} x, u)_{U} =
 { _{H'}(}  e^{\A t } Bu,x)_{H}=\langle x,R^{-1}e^{\A t}Bu\rangle\,.
$$
 and so the claim follows.

The previous assertion
 implies the identity
\begin{align} \label{fie}
  \| B' e^{A^*t} \|_{{\mathcal L}(H,U')} =
  \| R^{-1}e^{\A t} B \|_{{\mathcal L}(U,H)},\;\; t>0,
\end{align}
which together with \eqref{fit} implies the estimate
\eqref{DiseSINGOLARE}
which has been used in Subsection 1.1.

\vskip 4mm
\noindent
\textbf{Acknowledgement.}
The authors would like to thank the  referees
 for their
 useful remarks and suggestions
 which helped to  improve the paper.

\end{document}